\providecommand{\U}[1]{\protect\rule{.1in}{.1in}}
\providecommand{\U}[1]{\protect\rule{.1in}{.1in}}
\providecommand{\U}[1]{\protect\rule{.1in}{.1in}}
\providecommand{\U}[1]{\protect\rule{.1in}{.1in}}
\providecommand{\U}[1]{\protect\rule{.1in}{.1in}}
\providecommand{\U}[1]{\protect\rule{.1in}{.1in}}
\providecommand{\U}[1]{\protect\rule{.1in}{.1in}}
\newtheorem{theorem}{Theorem}[section]
\theoremstyle{plain}
\newtheorem{corollary}[theorem]{Corollary}
\newtheorem{definition}[theorem]{Definition}
\newtheorem{lemma}[theorem]{Lemma}
\newtheorem{proposition}[theorem]{Proposition}
\newtheorem{remark}[theorem]{Remark}
\numberwithin{equation}{section}
\begin{document}
\title[Duality in Banach and quasi-Banach spaces of homogeneous polynomials]{Duality results in Banach and quasi-Banach spaces of homogeneous polynomials
and applications}
\author{Vin\'{\i}cius V. F\'{a}varo and Daniel Pellegrino }
\thanks{The first named author is supported by FAPESP Grant 2014/50536-7; FAPEMIG
Grant PPM-00086-14; and CNPq Grants 482515/2013-9, 307517/2014-4. }
\thanks{The second named author is supported by CNPq.}

\begin{abstract}
Spaces of homogeneous polynomials on a Banach space are frequently equipped
with quasinorms instead of norms. In this paper we develop a technique to
replace the original quasi-norm by a norm in a dual preserving way, in the
sense that the dual of the space with the new norm coincides with the dual of
the space with the original quasi-norm. Applications to problems on the
existence and approximation of solutions of convolution equations and on
hypercyclic convolution operators on spaces of entire functions are provided.

\end{abstract}
\maketitle

\tableofcontents

\noindent\textbf{Mathematics Subject Classifications (2010):}  46A20, 46G20,
46A16, 46G25, 47A16. \newline\textbf{Key words:} Banach and quasi-Banach spaces,
homogeneous polynomials, entire functions, convolution operators, Lorentz
sequence spaces.


\section{Introduction}

In the 1960s, many researchers began the study of spaces of holomorphic
functions defined on infinite dimensional complex Banach spaces. In this
context spaces of $n$-homogeneous polynomials play a central role in the
development of the theory. Several tools, such as topological tensor products
and duality theory, are useful and important when we are working with spaces
of homogeneous polynomials. Many duality results on spaces of homogeneous
polynomials and their applications have appeared in the last decades (see for
instance \cite{BBFJ, boyd1, boyd2, boyd3, CDjmaa, CDSjmaa, cp, Dineen-70, DW,
favaro-jatoba1, favaromatospellegrino, FM, G, Gupta, Matos-F, MN} among
others). In this paper we develop a new technique in the duality theory of
spaces of homogeneous polynomials and we give some applications.

Let $E$ be a complex Banach space, $n\in\mathbb{N}$ and $\mathcal{P}(^{n}E)$
be the Banach space of all continuous $n$-homogeneous polynomials from $E$ to
$\mathbb{C}$ with its usual norm. Suppose that $\left(  \mathcal{P}_{\Delta
}(^{n}E),\left\Vert \cdot\right\Vert _{\Delta}\right)  \ $ is a quasi-normed
space of $n$-homogeneous polynomials on $E$ such that the inclusion
$\mathcal{P}_{\Delta}(^{n}E)\hookrightarrow\mathcal{P}(^{n}E)$ is
con\-ti\-nu\-ous and $\mathcal{P}_{f}(^{n}E)\subset\mathcal{P}_{\Delta}%
(^{n}E)$, where $\mathcal{P}_{f}(^{n}E)$ denotes the subspace of
$\mathcal{P}(^{n}E)$ of all polynomials of finite type. Let $C_{\Delta_{n}}>0$
be such that $\left\Vert P\right\Vert \leq C_{\Delta_{n}}\left\Vert
P\right\Vert _{\Delta},$ for all $P\in\mathcal{P}_{\Delta}(^{n}E)$. Suppose
that the normed space $\left(  \mathcal{P}_{\Delta^{\prime}}(^{n}E^{\prime
}),\left\Vert \cdot\right\Vert _{\Delta^{\prime}}\right)  \subset
\mathcal{P}(^{n}E^{\prime})$ is such that the Borel transform%
\[
\mathcal{B}:\left(  \mathcal{P}_{\Delta}(^{n}E)^{\prime},\left\Vert
\cdot\right\Vert \right)  \rightarrow\left(  \mathcal{P}_{\Delta^{\prime}%
}(^{n}E^{\prime}),\left\Vert \cdot\right\Vert _{\Delta^{\prime}}\right)
\]
given by $\mathcal{B}\left(  T\right)  (\varphi)=T(\varphi^{n}),$ for all
$\varphi\in E^{\prime}$ and $T\in\mathcal{P}_{\Delta}(^{n}E)^{\prime},$ is a
topological isomorphism. In this paper we develop a technique to construct a
norm in $\mathcal{P}_{\Delta}(^{n}E)$ in such way that this normed space (or
its completion denoted by $\left(  \mathcal{P}_{\widetilde{\Delta}}\left(
^{n}E\right)  ,\left\Vert \cdot\right\Vert _{\widetilde{\Delta}}\right)  $)
preserves the duality given by the Borel transform, that is the topological
isomorphism given by the Borel transform is still valid when we use
$\mathcal{P}_{\widetilde{\Delta}}\left(  ^{n}E\right) $ instead of
$\mathcal{P}_{\Delta}\left(  ^{n}E\right) $. As applications of this result,
we prove that, under suitable conditions, $\left(  \mathcal{P}_{\widetilde
{\Delta}}\left(  ^{n}E\right)  \right)  _{n=0}^{\infty}$ is a holomorphy type
and we provide new examples of hypercyclic convolution operators and new
existence and approximation results for convolution equations.

The paper is organized as follows:

In Section \ref{sec2} we develop the general theory to obtain a norm in the
quasi-normed space $\mathcal{P}_{\Delta}(^{n}E)$ and to keep the duality via
Borel transform. We also prove some technical results needed to the applications.

In Section \ref{sec3} we present background results that will be needed in the
next section.

In Section \ref{sec4} we obtain the aforementioned applications in a case that
was not possible before. We use as a prototype of model the class of Lorentz
nuclear polynomials.

Throughout the paper $\mathbb{N}$ denotes the set of positive integers and
$\mathbb{N}_{0}$ denotes the set $\mathbb{N}\cup\{0\}$. The letters $E$ and
$F$ will always denote complex Banach spaces and $E^{\prime}$ represents the
topological dual of $E$ and $E^{\prime\prime}$ its bidual. The Banach space of
all continuous $m$-homogeneous polynomials from $E$ into $F$ endowed with its
usual sup norm is denoted by $\mathcal{P}(^{m}E;F)$. The subspace of
$\mathcal{P}(^{m}E;F)$ of all polynomials of finite type is represented by
$\mathcal{P}_{f}(^{m}E;F)$. The linear space of all entire mappings from $E$
into $F$ is denoted by $\mathcal{H}(E;F)$. When $F=\mathbb{C}$ we write
$\mathcal{P}(^{m}E)$, $\mathcal{P}_{f}(^{m}E)$ and $\mathcal{H}(E)$ instead of
$\mathcal{P}(^{m}E;\mathbb{C})$, $\mathcal{P}_{f}(^{m}E;\mathbb{C})$ and
$\mathcal{H}(E;\mathbb{C})$, respectively. For the general theory of
homogeneous polynomials and holomorphic functions we refer to Dineen
\cite{Dineen} and Mujica \cite{Mujica}. If $G$ and $H$ are vector spaces and
$\left\langle \cdot,\cdot\right\rangle $\ is a bilinear form on $G\times H,$
we denote by $(G,H,\left\langle \cdot,\cdot\right\rangle )$ (or $(G,H)$ for
short) the dual system. We denote by $\sigma(G,H)$ the weak topology with
respect to the dual system $(G,H),$ that is, the coarsest topology on $G$ for
which the linear forms $x\rightarrow\left\langle x,y\right\rangle ,$ $y\in H$
are continuous.

\section{Main results\label{sec2}}

Let $n\in\mathbb{N}$ and suppose that $\left(  \mathcal{P}_{\Delta}%
(^{n}E),\left\Vert \cdot\right\Vert _{\Delta}\right)  \ $is a quasi-normed
space of $n$-homogeneous polynomials defined on $E$ such that the inclusion
$\mathcal{P}_{\Delta}(^{n}E)\hookrightarrow\mathcal{P}(^{n}E)$ is continuous
and $\mathcal{P}_{f}(^{n}E)\subset\mathcal{P}_{\Delta}(^{n}E)$. Let
$C_{\Delta_{n}}>0$ be such that $\left\Vert P\right\Vert \leq C_{\Delta_{n}%
}\left\Vert P\right\Vert _{\Delta},$ for all $P\in\mathcal{P}_{\Delta}(^{n}%
E)$. Suppose that the normed space $\left(  \mathcal{P}_{\Delta^{\prime}}%
(^{n}E^{\prime}),\left\Vert \cdot\right\Vert _{\Delta^{\prime}}\right)
\subset\mathcal{P}(^{n}E^{\prime})$ is such that the Borel transform%
\[
\mathcal{B}:\left(  \mathcal{P}_{\Delta}(^{n}E)^{\prime},\left\Vert
\cdot\right\Vert \right)  \rightarrow\left(  \mathcal{P}_{\Delta^{\prime}%
}(^{n}E^{\prime}),\left\Vert \cdot\right\Vert _{\Delta^{\prime}}\right)
\]
given by $\mathcal{B}\left(  T\right)  (\varphi)=T(\varphi^{n}),$ for all
$\varphi\in E^{\prime}$ and $T\in\mathcal{P}_{\Delta}(^{n}E)^{\prime},$ is a
topological isomorphism.

We will show that the pair
\[
\left(  \mathcal{P}_{\Delta}(^{n}E),\mathcal{P}_{\Delta^{\prime}}%
(^{n}E^{\prime})\right)
\]
is a dual system. More precisely, we will prove that there exists a bilinear
form $\left\langle \cdot;\cdot\right\rangle $ on
\[
\mathcal{P}_{\Delta}(^{n}E)\times\mathcal{P}_{\Delta^{\prime}}(^{n}E^{\prime
})
\]
such that the following conditions hold:\newline$\left(  S1\right)  $
$\left\langle P;Q\right\rangle =0$ for all $Q\in\mathcal{P}_{\Delta^{\prime}%
}(^{n}E^{\prime})$ implies $P=0.$\newline$\left(  S2\right)  $ $\left\langle
P;Q\right\rangle =0$ for all $P\in\mathcal{P}_{\Delta}(^{n}E)$ implies $Q=0.$

\bigskip

Let%
\[
\left\langle \cdot,\cdot\right\rangle \colon\mathcal{P}_{\Delta}(^{n}%
E)\times\mathcal{P}_{\Delta^{\prime}}(^{n}E^{\prime})\longrightarrow\mathbb{K}%
\]
be defined by%
\[
\left\langle P;Q\right\rangle =\mathcal{B}^{-1}\left(  Q\right)  \left(
P\right)  .
\]
It is clear that $\left\langle \cdot,\cdot\right\rangle $ is bilinear.

For $0\neq Q\in\mathcal{P}_{\Delta^{\prime}}\left(  ^{n}E^{\prime}\right)  $,
we have $\mathcal{B}^{-1}\left(  Q\right)  \neq0$ (because $\mathcal{B}$ is an
isomorphism). Hence%
\[
\left\langle P;Q\right\rangle =\mathcal{B}^{-1}\left(  Q\right)  \left(
P\right)  \neq0
\]
for some $P\in\mathcal{P}_{\Delta}\left(  ^{n}E\right)  $, and so $\left(
S2\right)  $ holds.

Now, if $0\neq P\in\mathcal{P}_{\Delta}\left(  ^{n}E\right)  ,$ then there is
$x\in E$ such that $P\left(  x\right)  \neq0.$ We consider $A_{x}\in
E^{\prime\prime}$ defined by%
\[
A_{x}\left(  \varphi\right)  =\varphi\left(  x\right)  ,
\]
for all $\varphi\in E^{\prime}$ and define%
\begin{gather*}
T:\mathcal{P}_{\Delta}\left(  ^{n}E\right)  \rightarrow\mathbb{K}\\
T\left(  P\right)  =P\left(  x\right)  .
\end{gather*}
Obviously $T$ is linear. Moreover, $T$ is continuous and $\left\Vert
T\right\Vert \leq C_{\Delta_{n}}\left\Vert x\right\Vert ^{n}$. In fact, for
every $P\in\mathcal{P}_{\Delta}\left(  ^{n}E\right)  $
\[
\left\vert T\left(  P\right)  \right\vert =\left\vert P(x)\right\vert
\leq\left\Vert P\right\Vert \left\Vert x\right\Vert ^{n}\leq C_{\Delta_{n}%
}\left\Vert x\right\Vert ^{n}\left\Vert P\right\Vert _{\Delta}%
\]
and so $\left\Vert T\right\Vert \leq C_{\Delta_{n}}\left\Vert x\right\Vert
^{n}$. Note that%
\[
\mathcal{B}\left(  T\right)  \left(  \varphi\right)  =T(\varphi^{n}%
)=\varphi\left(  x\right)  ^{n}=\left(  A_{x}\left(  \varphi\right)  \right)
^{n}%
\]
for all $\varphi\in E^{\prime}.$ We conclude that the polynomial
\begin{gather*}
\left(  A_{x}\right)  ^{n}:E^{\prime}\rightarrow\mathbb{K}\\
\left(  A_{x}\right)  ^{n}(\varphi)=\varphi(x)^{n}%
\end{gather*}
belongs to $\mathcal{P}_{\Delta^{\prime}}\left(  ^{n}E^{\prime}\right)  $ and%
\[
<P,\left(  A_{x}\right)  ^{n}>=\mathcal{B}^{-1}\left(  \left(  A_{x}\right)
^{n}\right)  \left(  P\right)  =T(P)=P\left(  x\right)  \neq0.
\]
Thus $\left(  S1\right)  $ is proved and hence the pair $\left(
\mathcal{P}_{\Delta}\left(  ^{n}E\right)  ,\mathcal{P}_{\Delta^{\prime}%
}\left(  ^{n}E^{\prime}\right)  \right)  $ is a dual system.

\bigskip

Now, let%
\[
U=\left\{  P\in\mathcal{P}_{\Delta}\left(  ^{n}E\right)  ;\left\Vert
P\right\Vert _{\Delta}\leq1\right\}  .
\]
Since the bipolar of $U$, denoted by $U^{\circ\circ}$, is absorbing we can
consider the corresponding gauge%
\[
p_{U^{\circ\circ}}\left(  P\right)  =\inf\left\{  \delta>0;P\in\delta
U^{\circ\circ}\right\}  ,
\]
defined for all $P$ in $\mathcal{P}_{\Delta}\left(  ^{n}E\right)  .$ Recall
that the polar of $U$ is defined by%
\[
U^{\circ}=\left\{  Q\in\mathcal{P}_{\Delta^{\prime}}\left(  ^{n}E^{\prime
}\right)  ;\left\vert <P,Q>\right\vert \leq1\text{ for all }P\in U\right\}  .
\]
Hence%
\begin{align*}
U^{\circ}  &  =\left\{  Q\in\mathcal{P}_{\Delta^{\prime}}\left(  ^{n}%
E^{\prime}\right)  ;\left\vert \mathcal{B}^{-1}\left(  Q\right)
(P)\right\vert \leq1\text{ for all }P\in\mathcal{P}_{\Delta}\left(
^{n}E\right)  ,\text{ }\left\Vert P\right\Vert _{\Delta}\leq1\right\} \\
&  =\left\{  Q\in\mathcal{P}_{\Delta^{\prime}}\left(  ^{n}E^{\prime}\right)
;\left\Vert \mathcal{B}^{-1}\left(  Q\right)  \right\Vert \leq1\right\}  .
\end{align*}
Moreover,%
\begin{align*}
U^{\circ\circ}  &  =\left\{  P\in\mathcal{P}_{\Delta}\left(  ^{n}E\right)
;\left\vert <P,Q>\right\vert \leq1\text{ for all }Q\in U^{\circ}\right\} \\
&  =\left\{  P\in\mathcal{P}_{\Delta}\left(  ^{n}E\right)  ;\left\vert
\mathcal{B}^{-1}\left(  Q\right)  \left(  P\right)  \right\vert \leq1,\text{
for all }Q\in\mathcal{P}_{\Delta^{\prime}}\left(  ^{n}E^{\prime}\right)
\text{ with }\left\Vert \mathcal{B}^{-1}\left(  Q\right)  \right\Vert
\leq1\right\}
\end{align*}
and hence%
\[
p_{U^{\circ\circ}}\left(  P\right)  =\inf\left\{  \delta>0;\left\vert
\mathcal{B}^{-1}\left(  Q\right)  \left(  P\right)  \right\vert \leq
\delta,\text{ for all }Q\in\mathcal{P}_{\Delta^{\prime}}\left(  ^{n}E^{\prime
}\right)  \text{ with }\left\Vert \mathcal{B}^{-1}\left(  Q\right)
\right\Vert \leq1\right\}  .
\]
Since%
\[
\left\vert \mathcal{B}^{-1}\left(  Q\right)  \left(  P\right)  \right\vert
\leq\left\Vert \mathcal{B}^{-1}\left(  Q\right)  \right\Vert \left\Vert
P\right\Vert _{\Delta}\leq\left\Vert P\right\Vert _{\Delta},
\]
for all $Q\in\mathcal{P}_{\Delta^{\prime}}\left(  ^{n}E^{\prime}\right)  $
with $\left\Vert \mathcal{B}^{-1}\left(  Q\right)  \right\Vert \leq1,$ it
follows that%
\begin{equation}
p_{U^{\circ\circ}}\left(  P\right)  \leq\left\Vert P\right\Vert _{\Delta},
\label{dggg}%
\end{equation}
for all $P\in\mathcal{P}_{\Delta}\left(  ^{n}E\right)  .$

Note that $p_{U^{\circ\circ}}$ is a norm on $\mathcal{P}_{\Delta}\left(
^{n}E\right)  .$ In fact, we only have to prove that $p_{U^{\circ\circ}%
}\left(  P\right)  =0$ implies $P=0.$ If $p_{U^{\circ\circ}}\left(  P\right)
=0,$ then%
\[
\left\vert \mathcal{B}^{-1}\left(  Q\right)  \left(  P\right)  \right\vert =0
\]
for all $Q\in\mathcal{P}_{\Delta^{\prime}}\left(  ^{n}E^{\prime}\right)  $
with $\left\Vert \mathcal{B}^{-1}\left(  Q\right)  \right\Vert \leq1.$ So, we
conclude that
\[
<P,Q>=\left\vert \mathcal{B}^{-1}\left(  Q\right)  \left(  P\right)
\right\vert =0
\]
for all $Q\in\mathcal{P}_{\Delta^{\prime}}\left(  ^{n}E^{\prime}\right)  .$
Hence, from $\left(  S1\right)  $ it follows that $P=0.$

\bigskip

From now on we will often use the Bipolar Theorem, which asserts that the
bipolar of $U$ coincides with the $\sigma\left(  \mathcal{P}_{\Delta}\left(
^{n}E\right)  ,\mathcal{P}_{\Delta^{\prime}}\left(  ^{n}E^{\prime}\right)
\right)  $-closure of the absolutely convex hull $\Gamma\left(  U\right)  $ of
$U$.

\begin{proposition}
\label{desigualdade_gauge}If $P\in\mathcal{P}_{\Delta}\left(  ^{n}E\right)  $
then
\[
\left\Vert P\right\Vert \leq C_{\Delta_{n}}p_{U^{\circ\circ}}\left(  P\right)
.
\]

\end{proposition}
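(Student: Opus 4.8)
The plan is to recognize the gauge $p_{U^{\circ\circ}}$ as a supremum over the closed unit ball of the dual $\mathcal{P}_{\Delta}(^{n}E)'$, and then to test $P$ against the very evaluation functionals that compute its sup-norm $\|P\|$. These two ingredients are already essentially present in the text preceding the statement, so the argument is short.

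First I would rewrite the gauge as an honest supremum. The defining condition ``$\left\vert \mathcal{B}^{-1}(Q)(P)\right\vert \le\delta$ for all $Q$ with $\left\Vert \mathcal{B}^{-1}(Q)\right\Vert \le1$'' merely asserts that $\delta$ dominates $\sup\{\left\vert \mathcal{B}^{-1}(Q)(P)\right\vert :\left\Vert \mathcal{B}^{-1}(Q)\right\Vert \le1\}$, so the infimum defining $p_{U^{\circ\circ}}(P)$ equals that supremum. Since $\mathcal{B}$ is a topological isomorphism, as $Q$ ranges over $\{Q\in\mathcal{P}_{\Delta'}(^{n}E'):\left\Vert \mathcal{B}^{-1}(Q)\right\Vert \le1\}$ the functional $S:=\mathcal{B}^{-1}(Q)$ ranges over the whole closed unit ball of $\mathcal{P}_{\Delta}(^{n}E)'$. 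Hence
\[
p_{U^{\circ\circ}}(P)=\sup\left\{  \left\vert S(P)\right\vert :S\in\mathcal{P}_{\Delta}(^{n}E)',\ \left\Vert S\right\Vert \le1\right\}  .
\]

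Next I would feed into this supremum the point-evaluation functionals constructed in the proof that $(S1)$ holds. For $x\in E$ set $T_{x}(P)=P(x)$; the computation given just above shows $T_{x}\in\mathcal{P}_{\Delta}(^{n}E)'$ with $\left\Vert T_{x}\right\Vert \le C_{\Delta_{n}}\left\Vert x\right\Vert ^{n}$. Thus whenever $\left\Vert x\right\Vert \le1$ we have $\left\Vert T_{x}\right\Vert \le C_{\Delta_{n}}$, so $C_{\Delta_{n}}^{-1}T_{x}$ lies in the closed unit ball of the dual, and the displayed formula for the gauge gives
\[
p_{U^{\circ\circ}}(P)\ge\left\vert C_{\Delta_{n}}^{-1}T_{x}(P)\right\vert =C_{\Delta_{n}}^{-1}\left\vert P(x)\right\vert .
\]

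Finally I would take the supremum over $\{x\in E:\left\Vert x\right\Vert \le1\}$. Because $\left\Vert P\right\Vert =\sup_{\left\Vert x\right\Vert \le1}\left\vert P(x)\right\vert $ is exactly the usual sup-norm inherited through the continuous inclusion $\mathcal{P}_{\Delta}(^{n}E)\hookrightarrow\mathcal{P}(^{n}E)$, the previous inequality yields $\left\Vert P\right\Vert \le C_{\Delta_{n}}\,p_{U^{\circ\circ}}(P)$, as claimed. I expect no genuine obstacle here: the only step requiring care is the first one, where the isomorphism property of $\mathcal{B}$ is used to convert the gauge into a supremum over the dual unit ball; once that identification is in hand, the estimate is immediate by testing against point evaluations.
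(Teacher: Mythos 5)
Your proof is correct, but it takes a genuinely different route from the paper's. You first convert the gauge into a supremum, observing that $p_{U^{\circ\circ}}(P)=\sup\{|\langle P,Q\rangle| : Q\in U^{\circ}\}$ (the standard identity between the Minkowski functional of a bipolar and the support function of the polar), and then use surjectivity of $\mathcal{B}$ to see that this is the supremum of $|S(P)|$ over the closed unit ball of $\left(\mathcal{P}_{\Delta}(^{n}E),\|\cdot\|_{\Delta}\right)'$; testing against the normalized point evaluations $C_{\Delta_{n}}^{-1}T_{x}$, $\|x\|\le 1$, then gives the inequality immediately. The paper instead works directly with the set $U^{\circ\circ}$: it bounds $\|\cdot\|$ by $C_{\Delta_{n}}$ on the absolutely convex hull $\Gamma(U)$ via the triangle inequality, invokes the Bipolar Theorem to realize $U^{\circ\circ}$ as the $\sigma\left(\mathcal{P}_{\Delta}(^{n}E),\mathcal{P}_{\Delta'}(^{n}E')\right)$-closure of $\Gamma(U)$, passes the bound to the closure along a net using the weak continuity of the evaluations $(A_{x})^{n}$, and finally rescales $P$ by $p_{U^{\circ\circ}}(P)^{-1}$. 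Both arguments hinge on the same functionals $T_{x}$ and on the estimate $\|T_{x}\|\le C_{\Delta_{n}}\|x\|^{n}$; what your version buys is brevity and the complete avoidance of the Bipolar Theorem and of net-convergence arguments, while the paper's version has the side benefit of establishing the uniform bound $\|P\|\le C_{\Delta_{n}}$ for all $P\in U^{\circ\circ}$, a fact it reuses later (e.g.\ in the argument following (\ref{may15a}) and in the proof of Theorem \ref{Theo_stability_for_derivatives}).
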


\begin{proof}
We know that%
\[
\left\Vert P\right\Vert \leq C_{\Delta_{n}}\left\Vert P\right\Vert _{\Delta},
\]
for all $P\in\mathcal{P}_{\Delta}\left(  ^{n}E\right)  .$ If $P$ belongs to
the absolutely convex hull $\Gamma\left(  U\right)  $ of $U,$ then%
\[
P=\sum\limits_{j=1}^{m}\lambda_{j}P_{j},
\]
where $P_{j}\in U$, $\lambda_{j}\in\mathbb{K}$, $j=1,\ldots,m,$ for some
$m\in\mathbb{N}$, and%
\[
\sum\limits_{j=1}^{m}\left\vert \lambda_{j}\right\vert \leq1.
\]
Since $P_{j}\in U$, we have%
\[
\left\Vert P_{j}\right\Vert \leq C_{\Delta_{n}}\left\Vert P_{j}\right\Vert
_{\Delta}\leq C_{\Delta_{n}}.
\]
Therefore,%
\begin{equation}
\left\Vert P\right\Vert =\left\Vert \sum\limits_{j=1}^{m}\lambda_{j}%
P_{j}\right\Vert \leq\sum\limits_{j=1}^{m}\left\vert \lambda_{j}\right\vert
\left\Vert P_{j}\right\Vert \leq C_{\Delta_{n}}\sum\limits_{j=1}^{m}\left\vert
\lambda_{j}\right\vert \leq C_{\Delta_{n}}\label{15may}%
\end{equation}
for every $P\in\Gamma\left(  U\right)  $. Now if $P\in U^{\circ\circ},$ which
is the $\sigma\left(  \left(  \mathcal{P}_{\Delta}\left(  ^{n}E\right)
,\mathcal{P}_{\Delta^{\prime}}\left(  ^{n}E^{\prime}\right)  \right)  \right)
$-closure of $\Gamma\left(  U\right)  ,$ let $\left(  P_{i}\right)  _{i\in I}$
be a net in $\Gamma\left(  U\right)  $ such that%
\[
\lim_{i\in I}\left\vert <P_{i},Q>\right\vert =\left\vert <P,Q>\right\vert
\]
for all $Q\in\mathcal{P}_{\Delta^{\prime}}\left(  ^{n}E^{\prime}\right)  .$ So
we have%
\[
\lim_{i\in I}\left\vert \mathcal{B}^{-1}\left(  Q\right)  \left(
P_{i}\right)  \right\vert =\left\vert \mathcal{B}^{-1}\left(  Q\right)
\left(  P\right)  \right\vert
\]
for all $Q\in\mathcal{P}_{\Delta^{\prime}}\left(  ^{n}E^{\prime}\right)  .$ In
particular, for $x\in B_{E},$ we have%
\begin{align*}
\left\vert P\left(  x\right)  \right\vert  &  =\left\vert \mathcal{B}%
^{-1}\left(  \left(  A_{x}\right)  ^{n}\right)  \left(  P\right)  \right\vert
\\
&  =\lim_{i\in I}\left\vert \mathcal{B}^{-1}\left(  \left(  A_{x}\right)
^{n}\right)  \left(  P_{i}\right)  \right\vert =\lim_{i\in I}\left\vert
P_{i}\left(  x\right)  \right\vert \overset{\text{(\ref{15may})}}{\leq
}C_{\Delta_{n}}\left\Vert x\right\Vert .
\end{align*}
Hence%
\begin{equation}
\left\Vert P\right\Vert \leq C_{\Delta_{n}}\label{may15a}%
\end{equation}
for every $P\in U^{\circ\circ}$. Finally, for $0\neq P\in\mathcal{P}_{\Delta
}\left(  ^{n}E\right)  ,$ let%
\[
R=\left(  p_{U^{\circ\circ}}\left(  P\right)  \right)  ^{-1}P.
\]
We thus have%
\[
p_{U^{\circ\circ}}\left(  R\right)  =1,
\]
and this implies that%
\[
\left\vert \mathcal{B}^{-1}\left(  Q\right)  \left(  R\right)  \right\vert
\leq1
\]
for all $Q\in\mathcal{P}_{\Delta^{\prime}}\left(  ^{n}E^{\prime}\right)  $
with $\left\Vert \mathcal{B}^{-1}\left(  Q\right)  \right\Vert \leq1.$ Thus
$R\in U^{\circ\circ}$ and, consequently, from (\ref{may15a}) we conclude that
$\left\Vert R\right\Vert \leq C_{\Delta_{n}},$ i.e.,%
\[
\left\Vert \left(  p_{U^{\circ\circ}}\left(  P\right)  \right)  ^{-1}%
P\right\Vert \leq C_{\Delta_{n}}%
\]
and the result follows.
\end{proof}

\bigskip

We denote the completion of the space $\left(  \mathcal{P}_{\Delta}\left(
^{n}E\right)  ,p_{U^{\circ\circ}}\right)  $ by $\left(  \mathcal{P}%
_{\widetilde{\Delta}}\left(  ^{n}E\right)  ,\left\Vert \cdot\right\Vert
_{\widetilde{\Delta}}\right)  .$ So the restriction of $\left\Vert
\cdot\right\Vert _{\widetilde{\Delta}}$ to $\mathcal{P}_{\Delta}\left(
^{n}E\right)  $ is $p_{U^{\circ\circ}}$ and Proposition
\ref{desigualdade_gauge} implies that
\[
\mathcal{P}_{\widetilde{\Delta}}\left(  ^{n}E\right)  \subset\mathcal{P}%
\left(  ^{n}E\right)
\]
and%
\begin{equation}
\left\Vert P\right\Vert \leq C_{\Delta_{n}}\left\Vert P\right\Vert
_{\widetilde{\Delta}}, \label{desig_norma_usual}%
\end{equation}
for all $P$ in $\mathcal{P}_{\widetilde{\Delta}}\left(  ^{n}E\right)  .$

\begin{definition}
\textrm{\label{def555}The elements of $\mathcal{P}_{\widetilde{\Delta}}\left(
^{n}E\right)  $ are called \emph{quasi-}$\Delta$ $n$-\emph{homogeneous
polynomials}. }
\end{definition}

\begin{remark}
\textrm{When $\mathcal{P}_{\Delta}\left(  ^{n}E\right)  $ is a Banach space
then we have $\left\Vert \cdot\right\Vert _{\widetilde{\Delta}}=\left\Vert
\cdot\right\Vert _{\Delta}$ and $\mathcal{P}_{\widetilde{\Delta}}\left(
^{n}E\right)  =\mathcal{P}_{\Delta}\left(  ^{n}E\right)  .$ }

\textrm{In fact, in this case, $U$ is the closed unit ball in $\mathcal{P}%
_{\Delta}\left(  ^{n}E\right)  $, hence balanced and convex and $\Gamma(U)=U.$
By using the Bipolar Theorem we have%
\[
U^{\circ\circ}=\overline{\Gamma(U)}^{\sigma\left(  \mathcal{P}_{\Delta}\left(
^{n}E\right)  ,\mathcal{P}_{\Delta^{\prime}}\left(  ^{n}E^{\prime}\right)
\right)  }=\overline{U}^{\sigma\left(  \mathcal{P}_{\Delta}\left(
^{n}E\right)  ,\mathcal{P}_{\Delta^{\prime}}\left(  ^{n}E^{\prime}\right)
\right)  }=U,
\]
and the last equality follows from Banach-Mazur Theorem. Hence%
\[
p_{U^{\circ\circ}}\left(  P\right)  =\inf\left\{  \delta>0;P\in\delta
U^{\circ\circ}\right\}  =\inf\left\{  \delta>0;P\in\delta U\right\}
=\inf\left\{  \delta>0;\left\Vert P\right\Vert _{\Delta}\leq\delta\right\}
=\left\Vert P\right\Vert _{\Delta}.
\]
}
\end{remark}

\medskip

The next theorem plays a fundamental role in this work. It assures that the
duals of $\mathcal{P}_{\widetilde{\Delta}}\left(  ^{n}E\right)  $ and
$\mathcal{P}_{\Delta}\left(  ^{n}E\right)  $ are identified by the Borel transform.

\begin{theorem}
The linear mapping%
\begin{align*}
\widetilde{\mathcal{B}}\colon\left(  \mathcal{P}_{\widetilde{\Delta}}\left(
^{n}E\right)  ^{\prime} ,\left\Vert .\right\Vert \right)   &  \longrightarrow
\left(  \mathcal{P}_{\Delta^{\prime}}\left(  ^{n}E^{\prime}\right)
,\left\Vert .\right\Vert _{\Delta^{\prime}}\right) \\
\widetilde{\mathcal{B}}\left(  T\right)  \left(  \varphi\right)   &  =T\left(
\varphi^{n}\right)
\end{align*}
is a topological isomorphism.
\end{theorem}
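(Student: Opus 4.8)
The plan is to factor $\widetilde{\mathcal{B}}$ through the given Borel isomorphism $\mathcal{B}$ together with two purely formal identifications of dual spaces, thereby reducing the theorem to a comparison of the norms $\left\Vert \cdot\right\Vert _{\Delta}$ and $p_{U^{\circ\circ}}$ at the level of their continuous duals. First I would invoke the standard fact that the continuous dual of a normed space coincides isometrically with the continuous dual of its completion: every $T\in\mathcal{P}_{\widetilde{\Delta}}\left(  ^{n}E\right)  ^{\prime}$ restricts to a functional on the dense subspace $\left(  \mathcal{P}_{\Delta}\left(  ^{n}E\right)  ,p_{U^{\circ\circ}}\right)  $ of the same norm, and conversely every $p_{U^{\circ\circ}}$-continuous functional extends uniquely by continuity with the same norm. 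Since each $\varphi^{n}$ is of finite type and hence lies in $\mathcal{P}_{\Delta}\left(  ^{n}E\right)  $, the value $\widetilde{\mathcal{B}}\left(  T\right)  \left(  \varphi\right)  =T\left(  \varphi^{n}\right)  $ depends only on this restriction, so $\widetilde{\mathcal{B}}=\mathcal{B}\circ\rho$, where $\rho\colon\mathcal{P}_{\widetilde{\Delta}}\left(  ^{n}E\right)  ^{\prime}\rightarrow\left(  \mathcal{P}_{\Delta}\left(  ^{n}E\right)  ,p_{U^{\circ\circ}}\right)  ^{\prime}$ is the restriction map.

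The crux is then to show that $\left(  \mathcal{P}_{\Delta}\left(  ^{n}E\right)  ,p_{U^{\circ\circ}}\right)  ^{\prime}$ and $\mathcal{P}_{\Delta}\left(  ^{n}E\right)  ^{\prime}=\left(  \mathcal{P}_{\Delta}\left(  ^{n}E\right)  ,\left\Vert \cdot\right\Vert _{\Delta}\right)  ^{\prime}$ coincide as sets of functionals, with equal operator norms. One inclusion is immediate from (\ref{dggg}): since $p_{U^{\circ\circ}}\leq\left\Vert \cdot\right\Vert _{\Delta}$, every $p_{U^{\circ\circ}}$-continuous functional is $\left\Vert \cdot\right\Vert _{\Delta}$-continuous, and the same seminorm inequality forces the $\left\Vert \cdot\right\Vert _{\Delta}$-operator norm to be dominated by the $p_{U^{\circ\circ}}$-operator norm (the gauge-unit set is the larger one). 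For the reverse, I would take $0\neq T\in\mathcal{P}_{\Delta}\left(  ^{n}E\right)  ^{\prime}$, set $Q=\mathcal{B}\left(  T\right)  $, and note that $\mathcal{B}^{-1}\left(  Q/\left\Vert T\right\Vert \right)  =T/\left\Vert T\right\Vert $ has norm one, so $Q/\left\Vert T\right\Vert \in U^{\circ}$ by the description of $U^{\circ}$ established just before Proposition \ref{desigualdade_gauge}.

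The key inequality is the polar estimate $\left\vert \left\langle P,Q\right\rangle \right\vert \leq p_{U^{\circ\circ}}\left(  P\right)  $ valid for every $Q\in U^{\circ}$, which I would read off directly from the definitions: for any $\delta>p_{U^{\circ\circ}}\left(  P\right)  $ we have $P/\delta\in U^{\circ\circ}=\left(  U^{\circ}\right)  ^{\circ}$, whence $\left\vert \left\langle P/\delta,Q\right\rangle \right\vert \leq1$, and letting $\delta\downarrow p_{U^{\circ\circ}}\left(  P\right)  $ gives the claim. Applying this to $Q/\left\Vert T\right\Vert $ yields $\left\vert \mathcal{B}^{-1}\left(  Q/\left\Vert T\right\Vert \right)  \left(  P\right)  \right\vert \leq p_{U^{\circ\circ}}\left(  P\right)  $, that is $\left\vert T\left(  P\right)  \right\vert \leq\left\Vert T\right\Vert \,p_{U^{\circ\circ}}\left(  P\right)  $ for all $P$; so $T$ is $p_{U^{\circ\circ}}$-continuous with $p_{U^{\circ\circ}}$-operator norm at most $\left\Vert T\right\Vert $, completing the isometric identification.

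Combining the two identifications, $\rho$ is an isometric isomorphism onto $\mathcal{P}_{\Delta}\left(  ^{n}E\right)  ^{\prime}$, and since $\widetilde{\mathcal{B}}=\mathcal{B}\circ\rho$ is a composition of topological isomorphisms it is itself a topological isomorphism. I expect the only genuine obstacle to be the reverse operator-norm estimate: it is precisely here that the definition of $p_{U^{\circ\circ}}$ as the gauge of the bipolar is essential, through the polar inequality above and the bipolar identity $U^{\circ\circ}=\left(  U^{\circ}\right)  ^{\circ}$. The two identifications of duals and the factorization of $\widetilde{\mathcal{B}}$ are then routine.
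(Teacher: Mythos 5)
Your proof is correct, and its overall architecture coincides with the paper's: both reduce the theorem to the chain "dual of the completion $=$ dual of the dense subspace $(\mathcal{P}_{\Delta}(^{n}E),p_{U^{\circ\circ}})$ $=$ dual of $(\mathcal{P}_{\Delta}(^{n}E),\left\Vert\cdot\right\Vert_{\Delta})$", followed by composition with $\mathcal{B}$; and both get the easy inclusion from the inequality (\ref{dggg}). Where you genuinely diverge is in the reverse inclusion, which is the heart of the matter. The paper argues geometrically: it bounds $\left\vert T(P)\right\vert$ by $\sup_{Q\in U}\left\vert T(Q)\right\vert$ first on the absolutely convex hull $\Gamma(U)$ and then passes to $U^{\circ\circ}$ via a net converging in $\sigma\left(\mathcal{P}_{\Delta}(^{n}E),\mathcal{P}_{\Delta^{\prime}}(^{n}E^{\prime})\right)$, invoking the Bipolar Theorem to identify $U^{\circ\circ}$ with the weak closure of $\Gamma(U)$; this step tacitly uses that a $\left\Vert\cdot\right\Vert_{\Delta}$-continuous $T$ is weakly continuous, i.e. of the form $\left\langle\cdot,\mathcal{B}(T)\right\rangle$, which is what makes $\lim_{i}\left\vert T(P_{i})\right\vert=\left\vert T(P)\right\vert$ legitimate. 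You instead make that representation the starting point: writing $Q=\mathcal{B}(T)$, observing $Q/\left\Vert T\right\Vert\in U^{\circ}$ from the explicit description of $U^{\circ}$, and applying the purely algebraic polar estimate $\left\vert\left\langle P,Q\right\rangle\right\vert\leq p_{U^{\circ\circ}}(P)$ for $Q\in U^{\circ}$ (which needs only the definition $U^{\circ\circ}=\left(U^{\circ}\right)^{\circ}$ and the balancedness of $U^{\circ\circ}$, not the Bipolar Theorem). This buys you two things: you avoid the net argument entirely, and you obtain the quantitative bound $\left\vert T(P)\right\vert\leq\left\Vert T\right\Vert\,p_{U^{\circ\circ}}(P)$, which together with the easy direction makes the identification of the two duals explicitly isometric rather than merely topological. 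The paper's route, on the other hand, is the one that generalizes to functionals not a priori known to be weakly continuous, but that extra generality is not needed here.
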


\begin{proof}
We know that $\left(  \mathcal{P}_{\Delta}\left(  ^{n}E\right)  ,p_{U^{\circ
\circ}}\right)  $ is dense in $\left(  \mathcal{P}_{\widetilde{\Delta}}\left(
^{n}E\right)  ,\left\Vert \cdot\right\Vert _{\widetilde{\Delta}}\right)  .$
Thus the topological duals of both spaces are isometrically isomorphic. So we
only need to prove that $\mathcal{P}_{\Delta}\left(  ^{n}E\right)  $ has the
same topological dual for the norm $p_{U^{\circ\circ}}$ and for the quasi-norm
$\left\Vert \cdot\right\Vert _{\Delta}.$ By (\ref{dggg}), for each
$T\in\left(  \mathcal{P}_{\Delta}\left(  ^{n}E\right)  ,p_{U^{\circ\circ}%
}\right)  ^{\prime}$ we have%
\[
\sup_{P\in U}\left\vert T\left(  P\right)  \right\vert \leq\sup_{p_{U^{\circ
\circ}}\left(  P\right)  \leq1}\left\vert T\left(  P\right)  \right\vert
\]
and this implies that the inclusion
\[
\left(  \mathcal{P}_{\Delta}\left(  ^{n}E\right)  ,p_{U^{\circ\circ}}\right)
^{\prime}\hookrightarrow\left(  \mathcal{P}_{\Delta}\left(  ^{n}E\right)
,\left\Vert \cdot\right\Vert _{\Delta}\right)  ^{\prime}%
\]
is continuous. Now, let $T\in\left(  \mathcal{P}_{\Delta}\left(  ^{n}E\right)
,\left\Vert \cdot\right\Vert _{\Delta}\right)  ^{\prime}.$ If $P\in
\Gamma\left(  U\right)  ,$ then%
\[
P=\sum\limits_{j=1}^{m}\lambda_{j}P_{j},
\]
where $P_{j}\in U$, $\lambda_{j}\in\mathbb{K}$, $j=1,\ldots,m,$ for some
$m\in\mathbb{N}$, and%
\[
\sum\limits_{j=1}^{m}\left\vert \lambda_{j}\right\vert \leq1.
\]
Thus%
\begin{equation}
\left\vert T\left(  P\right)  \right\vert \leq\sum\limits_{j=1}^{m}\left\vert
\lambda_{j}\right\vert \left\Vert T\left(  P_{j}\right)  \right\Vert \leq
\sup_{Q\in U}\left\vert T\left(  Q\right)  \right\vert \sum\limits_{j=1}%
^{m}\left\vert \lambda_{j}\right\vert \leq\sup_{Q\in U}\left\vert T\left(
Q\right)  \right\vert <+\infty.\label{8julhh}%
\end{equation}
If $P\in U^{\circ\circ},$ then there exists a net $\left(  P_{i}\right)
_{i\in I}\subset\Gamma\left(  U\right)  $ such that%
\[
\left\vert T\left(  P\right)  \right\vert =\lim_{i\in I}\left\vert T\left(
P_{i}\right)  \right\vert \overset{\text{(\ref{8julhh})}}{\leq}\sup_{Q\in
U}\left\vert T\left(  Q\right)  \right\vert <+\infty.
\]
Hence $T$ is bounded over $U^{\circ\circ}$ and so continuous for
$p_{U^{\circ\circ}},$ as we wanted to show.
\end{proof}

\bigskip

The next result will be necessary in Section \ref{sec4}. It is clear that
since $\mathcal{P}_{f}\left(  ^{n}E\right)  $ is contained in $\mathcal{P}%
_{\Delta}\left(  ^{n}E\right)  $, then $\mathcal{P}_{f}\left(  ^{n}E\right)  $
is contained in $\mathcal{P}_{\widetilde{\Delta}}\left(  ^{n}E\right)  $.

\begin{proposition}
\label{lema_para_pi1} Let $n\in\mathbb{N}$.\newline(a) If there exists $K>0$
such that $\left\Vert \varphi^{n}\right\Vert _{\Delta}\leq K\left\Vert
\varphi\right\Vert ^{n},$ for all $\varphi\in E^{\prime}$, then%
\[
\left\Vert \varphi^{n}\right\Vert _{\widetilde{\Delta}}\leq K\left\Vert
\varphi\right\Vert ^{n}\leq KC_{\Delta_{1}}^{n}\left\Vert \varphi\right\Vert
_{\widetilde{\Delta}}^{n}%
\]
for all $\varphi\in E^{\prime}.$\newline(b) If $\mathcal{P}_{f}\left(
^{n}E\right)  $ is dense in $\left(  \mathcal{P}_{\Delta}\left(  ^{n}E\right)
,\left\Vert \cdot\right\Vert _{\Delta}\right)  ,$ then $\mathcal{P}_{f}\left(
^{n}E\right)  $ is dense in $\left(  \mathcal{P}_{\widetilde{\Delta}}\left(
^{n}E\right)  ,\left\Vert \cdot\right\Vert _{\widetilde{\Delta}}\right)  .$
\end{proposition}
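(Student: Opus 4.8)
The plan is to reduce both parts to the two inequalities already established, namely (\ref{dggg}), which gives $p_{U^{\circ\circ}}\left(P\right) \le \left\Vert P\right\Vert_{\Delta}$, and (\ref{desig_norma_usual}), which gives $\left\Vert P\right\Vert \le C_{\Delta_n}\left\Vert P\right\Vert_{\widetilde{\Delta}}$. I will also use repeatedly that the restriction of $\left\Vert\cdot\right\Vert_{\widetilde{\Delta}}$ to $\mathcal{P}_{\Delta}\left(^{n}E\right)$ coincides with the gauge $p_{U^{\circ\circ}}$, so that on $\mathcal{P}_{\Delta}\left(^{n}E\right)$ the new norm is dominated by the original quasi-norm.

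For part (a), I would first observe that $\varphi^{n}\in\mathcal{P}_{f}\left(^{n}E\right)\subset\mathcal{P}_{\Delta}\left(^{n}E\right)$, so that (\ref{dggg}) applies to $P=\varphi^{n}$ and yields
\[
\left\Vert \varphi^{n}\right\Vert_{\widetilde{\Delta}}=p_{U^{\circ\circ}}\left(\varphi^{n}\right)\le\left\Vert\varphi^{n}\right\Vert_{\Delta}\le K\left\Vert\varphi\right\Vert^{n},
\]
the last step being exactly the hypothesis of (a); this is the first asserted inequality. For the second, I would invoke (\ref{desig_norma_usual}) in the case $n=1$: viewing $\varphi\in E^{\prime}$ as an element of $\mathcal{P}_{\widetilde{\Delta}}\left(^{1}E\right)$ gives $\left\Vert\varphi\right\Vert\le C_{\Delta_{1}}\left\Vert\varphi\right\Vert_{\widetilde{\Delta}}$. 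Raising this to the $n$-th power and multiplying by $K$ produces $K\left\Vert\varphi\right\Vert^{n}\le KC_{\Delta_{1}}^{n}\left\Vert\varphi\right\Vert_{\widetilde{\Delta}}^{n}$, which is precisely the second claimed inequality. Thus (a) is immediate once the $n=1$ instance of (\ref{desig_norma_usual}) is in hand.

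For part (b), I would run a standard three-term approximation. Fix $P\in\mathcal{P}_{\widetilde{\Delta}}\left(^{n}E\right)$ and $\varepsilon>0$. Since $\mathcal{P}_{\widetilde{\Delta}}\left(^{n}E\right)$ is by definition the completion of $\left(\mathcal{P}_{\Delta}\left(^{n}E\right),p_{U^{\circ\circ}}\right)$, the subspace $\mathcal{P}_{\Delta}\left(^{n}E\right)$ is $\left\Vert\cdot\right\Vert_{\widetilde{\Delta}}$-dense, so I can choose $Q\in\mathcal{P}_{\Delta}\left(^{n}E\right)$ with $\left\Vert P-Q\right\Vert_{\widetilde{\Delta}}<\varepsilon/2$. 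The hypothesis of (b) then provides $R\in\mathcal{P}_{f}\left(^{n}E\right)$ with $\left\Vert Q-R\right\Vert_{\Delta}<\varepsilon/2$, and applying (\ref{dggg}) to $Q-R\in\mathcal{P}_{\Delta}\left(^{n}E\right)$ gives $\left\Vert Q-R\right\Vert_{\widetilde{\Delta}}\le\left\Vert Q-R\right\Vert_{\Delta}<\varepsilon/2$. The triangle inequality then yields $\left\Vert P-R\right\Vert_{\widetilde{\Delta}}<\varepsilon$, establishing the density of $\mathcal{P}_{f}\left(^{n}E\right)$ in $\left(\mathcal{P}_{\widetilde{\Delta}}\left(^{n}E\right),\left\Vert\cdot\right\Vert_{\widetilde{\Delta}}\right)$.

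I expect no genuine obstacle in either part: the whole content is the domination $\left\Vert\cdot\right\Vert_{\widetilde{\Delta}}\le\left\Vert\cdot\right\Vert_{\Delta}$ on $\mathcal{P}_{\Delta}\left(^{n}E\right)$ coming from (\ref{dggg}), which lets $\Delta$-quasi-norm estimates pass to the $\widetilde{\Delta}$-norm, together with the $n=1$ case of (\ref{desig_norma_usual}) for the second inequality in (a). The only point requiring minor care is that (\ref{desig_norma_usual}) and the constant $C_{\Delta_{1}}$ be available for $n=1$, which is implicit in the standing hypotheses since the construction is carried out for each $n\in\mathbb{N}$.
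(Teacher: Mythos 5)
Your proof is correct and follows essentially the same route as the paper: part (a) combines the domination $\left\Vert\cdot\right\Vert_{\widetilde{\Delta}}\leq\left\Vert\cdot\right\Vert_{\Delta}$ from (\ref{dggg}) with the $n=1$ case of (\ref{desig_norma_usual}), and part (b) is the same density transfer via (\ref{dggg}), which you merely spell out as an explicit three-term approximation where the paper leaves it implicit.
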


\begin{proof}
(a) By inequality (\ref{dggg}) we have $\left\Vert P\right\Vert _{\widetilde
{\Delta}}\leq\left\Vert P\right\Vert _{\Delta}$, for every $P\in
\mathcal{P}_{\Delta}\left(  ^{n}E\right)  $. In particular, for every
$\varphi\in E^{\prime}$,
\[
\left\Vert \varphi^{n}\right\Vert _{\widetilde{\Delta}}\leq\left\Vert
\varphi^{n}\right\Vert _{\Delta}\leq K\left\Vert \varphi\right\Vert ^{n}.
\]
Besides, (\ref{desig_norma_usual}) assures that
\[
\left\Vert \varphi\right\Vert \leq C_{\Delta_{1}}\left\Vert \varphi\right\Vert
_{\widetilde{\Delta}}
\]
for every $\varphi\in E^{\prime}$. Now the result follows from the last two inequalities.

(b) We know that $p_{U^{\circ\circ}}\left(  \cdot\right)  \leq\left\Vert
\cdot\right\Vert _{\Delta}$(see (\ref{dggg})) and $p_{U^{\circ\circ}}\left(
P\right)  =\left\Vert P\right\Vert _{\widetilde{\Delta}}$, for all
$P\in\mathcal{P}_{\Delta}\left(  ^{n}E\right)  $. Using this fact and the
density of $\mathcal{P}_{f}\left(  ^{n}E\right)  $ in $\left(  \mathcal{P}%
_{\Delta}\left(  ^{n}E\right)  ,\left\Vert \cdot\right\Vert _{\Delta}\right)
,$ the result follows.
\end{proof}

\medskip

Now we are interested in connecting the previous construction with the concept
of holomorphy type that we recall below. The notation for the derivatives of
polynomials that we use are the same introduced by L. Nachbin in
\cite{nachbin}.

\begin{definition}
\textrm{\textrm{\textrm{\label{Definition tipo holomorfia} A \emph{holomorphy
type} $\Theta$ from $E$ to $F$ is a sequence of Banach spaces $(\mathcal{P}%
_{\Theta}(^{n}E;F))_{n=0}^{\infty}$, the norm on each of them being denoted by
$\|\cdot\|_{\Theta}$, such that the following conditions hold true: } } }

\item[$(1)$] \textrm{\textrm{\textrm{Each $\mathcal{P}_{\Theta}(^{n}E;F)$ is a
linear subspace of $\mathcal{P}(^{n}E;F)$. } } }

\item[$(2)$] \textrm{\textrm{\textrm{$\mathcal{P}_{\Theta}(^{0}E;F)$ coincides
with $\mathcal{P}(^{0}E;F)=F$ as a normed vector space.} } }

\item[$(3)$] \textrm{\textrm{\textrm{There is a real number $\sigma\geq1$ for
which the following is true: given any $k\in\mathbb{N}_{0}$, $n\in
\mathbb{N}_{0}$, $k\leq n$, $a\in E$ and $P\in\mathcal{P}_{\Theta}(^{n}E;F)$,
we have
\[
\hat{d}^{k}P(a)\in\mathcal{P}_{\Theta}(^{k}E;F)~~and
\]%
\[
\left\Vert \frac{1}{k!}\hat{d}^{k}P(a)\right\Vert _{\Theta}\leq\sigma^{n}\Vert
P\Vert_{\Theta}\Vert a\Vert^{n-k}.
\]
} } }
\end{definition}

\noindent It is plain that each inclusion $\mathcal{P}_{\Theta}(^{n}%
E;F)\subseteq\mathcal{P}(^{n}E;F)$ is continuous and that $\Vert P\Vert
\leq\sigma^{n}\Vert P\Vert_{\Theta}$ for every $P\in\mathcal{P}_{\Theta}%
(^{n}E;F)$.

\bigskip

The definition of holomorphy type motivates the next definition for
quasi-normed spaces of homogeneous polynomials.

\begin{definition}
\textrm{\textrm{\label{Def_stability_for_derivatives} For each $n\in
\mathbb{N}_{0}$, let $\left(  \mathcal{P}_{\Delta}\left(  ^{n}E\right)
,\left\Vert \cdot\right\Vert _{\Delta}\right)  $ be a quasi-normed space,
where $\mathcal{P}_{\Delta}\left(  ^{0}E\right)  =\mathbb{C}$. The sequence
$\left(  \mathcal{P}_{\Delta}\left(  ^{n}E\right)  \right)  _{n=0}^{\infty}$
is \emph{stable for derivatives }if } }

\textrm{\textrm{(1) $\hat{d}^{k}P\left(  x\right)  \in\mathcal{P}_{\Delta
}\left(  ^{k}E\right)  $ for each $n\in\mathbb{N}_{0}$, $P\in\mathcal{P}%
_{\Delta}\left(  ^{n}E\right)  ,$ $k=0,1,\ldots,n$ and $x\in E.$ } }

\textrm{\textrm{(2) For each $n\in\mathbb{N}_{0}$, $k=0,1,\ldots,n,$ there is
a constant $C_{n,k}\geq0$ such that%
\[
\left\Vert \hat{d}^{k}P\left(  x\right)  \right\Vert _{\Delta}\leq
C_{n,k}\left\Vert P\right\Vert _{\Delta}\left\Vert x\right\Vert ^{n-k},
\]
for all $x\in E.$ } }
\end{definition}

\begin{theorem}
\label{Theo_stability_for_derivatives}Let $\left(  \mathcal{P}_{\Delta}\left(
^{n}E\right)  \right)  _{n=0}^{\infty}$ be a sequence stable for
derivatives.\emph{ }If $P\in\mathcal{P}_{\widetilde{\Delta}}\left(
^{n}E\right)  $, then
\[
\hat{d}^{k}P(x)\in\mathcal{P}_{\widetilde{\Delta}}\left(  ^{k}E\right)
\]
and%
\[
\left\Vert \hat{d}^{k}P(x)\right\Vert _{\widetilde{\Delta}}\leq C_{n,k}%
\left\Vert P\right\Vert _{\widetilde{\Delta}}\left\Vert x\right\Vert ^{n-k},
\]
for every $k=0,1,\ldots,n$ and $x\in E$, where $C_{n,k}$ is the constant of
Definition \ref{Def_stability_for_derivatives}.
\end{theorem}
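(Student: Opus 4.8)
The plan is to establish the two assertions first on the dense subspace $\mathcal{P}_{\Delta}(^{n}E)$, using the norm $p_{U^{\circ\circ}}$, and then to transport them to the completion $\mathcal{P}_{\widetilde{\Delta}}(^{n}E)$ by a density and continuity argument. The point requiring care is that the bound must carry the \emph{new} norm $\|\cdot\|_{\widetilde{\Delta}}$ on the right-hand side: feeding the hypotheses of Definition \ref{Def_stability_for_derivatives} directly into (\ref{dggg}) only yields $\|P\|_{\Delta}$ on the right, which is too weak. The remedy is to use the description of $p_{U^{\circ\circ}}$ as the support function of the polar.

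First I would record the following formula. The standing hypotheses of this section hold at each degree $m\le n$ (so that, in particular, each $\mathcal{P}_{\widetilde{\Delta}}(^{m}E)$ is defined); write $U_{m}=\{R\in\mathcal{P}_{\Delta}(^{m}E):\|R\|_{\Delta}\le1\}$ and let $p_{U_{m}^{\circ\circ}}$ be the corresponding gauge. From the computations of $U^{\circ}$, $U^{\circ\circ}$ and $p_{U^{\circ\circ}}$ carried out before Proposition \ref{desigualdade_gauge}, together with the fact that $Q\mapsto\mathcal{B}^{-1}(Q)$ is a bijection of $\mathcal{P}_{\Delta^{\prime}}(^{m}E^{\prime})$ onto $\mathcal{P}_{\Delta}(^{m}E)^{\prime}$ carrying $U_{m}^{\circ}$ onto the closed unit ball of $\mathcal{P}_{\Delta}(^{m}E)^{\prime}$, the gauge is the support function of the polar, namely
\[
p_{U_{m}^{\circ\circ}}(R)=\sup\{|T(R)|:T\in\mathcal{P}_{\Delta}(^{m}E)^{\prime},\ \|T\|\le1\},
\]
for every $R\in\mathcal{P}_{\Delta}(^{m}E)$.

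Next I would prove the inequality on $\mathcal{P}_{\Delta}(^{n}E)$. Fix $x\in E$, an index $0\le k\le n$, and $P\in\mathcal{P}_{\Delta}(^{n}E)$, and set $M=C_{n,k}\|x\|^{n-k}$. For each $S\in\mathcal{P}_{\Delta}(^{k}E)^{\prime}$ with $\|S\|\le1$, the map $T_{S}\colon R\mapsto S(\hat{d}^{k}R(x))$ is well defined and linear, because $R\mapsto\hat{d}^{k}R(x)$ is linear and takes values in $\mathcal{P}_{\Delta}(^{k}E)$ by part (1) of Definition \ref{Def_stability_for_derivatives}; part (2) gives $|T_{S}(R)|\le\|S\|\,\|\hat{d}^{k}R(x)\|_{\Delta}\le M\|R\|_{\Delta}$, so $T_{S}\in\mathcal{P}_{\Delta}(^{n}E)^{\prime}$ with $\|T_{S}\|\le M$. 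Applying the formula of the previous paragraph at degree $k$ and then at degree $n$,
\[
p_{U_{k}^{\circ\circ}}(\hat{d}^{k}P(x))=\sup_{\|S\|\le1}|S(\hat{d}^{k}P(x))|=\sup_{\|S\|\le1}|T_{S}(P)|\le M\,p_{U_{n}^{\circ\circ}}(P),
\]
that is, $\|\hat{d}^{k}P(x)\|_{\widetilde{\Delta}}\le C_{n,k}\|x\|^{n-k}\|P\|_{\widetilde{\Delta}}$ for all $P\in\mathcal{P}_{\Delta}(^{n}E)$.

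Finally I would pass to the completion. Given $P\in\mathcal{P}_{\widetilde{\Delta}}(^{n}E)$, pick a sequence $(P_{j})\subset\mathcal{P}_{\Delta}(^{n}E)$ with $\|P_{j}-P\|_{\widetilde{\Delta}}\to0$. By the previous step and linearity, $(\hat{d}^{k}P_{j}(x))_{j}$ is Cauchy in the complete space $\mathcal{P}_{\widetilde{\Delta}}(^{k}E)$, hence converges to some $R$. To identify $R$ with $\hat{d}^{k}P(x)$, I would use (\ref{desig_norma_usual}): convergence in $\|\cdot\|_{\widetilde{\Delta}}$ forces convergence in the usual sup norm, so $P_{j}\to P$ in $\mathcal{P}(^{n}E)$ and $\hat{d}^{k}P_{j}(x)\to R$ in $\mathcal{P}(^{k}E)$; since the ordinary derivative operator $\hat{d}^{k}(\cdot)(x)\colon\mathcal{P}(^{n}E)\to\mathcal{P}(^{k}E)$ is continuous, also $\hat{d}^{k}P_{j}(x)\to\hat{d}^{k}P(x)$ in $\mathcal{P}(^{k}E)$, and uniqueness of limits gives $R=\hat{d}^{k}P(x)\in\mathcal{P}_{\widetilde{\Delta}}(^{k}E)$. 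Letting $j\to\infty$ in $\|\hat{d}^{k}P_{j}(x)\|_{\widetilde{\Delta}}\le C_{n,k}\|x\|^{n-k}\|P_{j}\|_{\widetilde{\Delta}}$ and using continuity of the norm yields the stated bound. The main obstacle is the second paragraph: the support-function identity is exactly what lets the new norm $\|\cdot\|_{\widetilde{\Delta}}$ appear on the right, and one must also take care in the last step to match the abstract limit $R$ in the completion with the genuine polynomial derivative $\hat{d}^{k}P(x)$.
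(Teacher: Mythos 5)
Your argument is correct, and it takes a genuinely different route from the paper's. The paper works on the primal side: it forms the absolutely convex hull $V_{n}=\Gamma(U_{n})$, estimates the gauge $p_{V_{k}}$ of $\hat{d}^{k}Q(x)$ for $Q\in V_{n}$ by writing $Q$ as a convex combination of elements of $U_{n}$, and then must identify $U_{n}^{\circ\circ}$ with $\left\{ Q;p_{V_{n}}(Q)\leq1\right\}$, which requires the Bipolar Theorem together with the fact (quoted from Schaefer) that the closure of a convex set is the same for all topologies consistent with the dual system $\left( \mathcal{P}_{\Delta}\left( ^{n}E\right) ,\mathcal{P}_{\Delta^{\prime}}\left( ^{n}E^{\prime}\right) \right)$. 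You instead work on the dual side: you read off from the paper's own computation of $U^{\circ\circ}$ that $p_{U_{m}^{\circ\circ}}$ is the support function of the dual unit ball, i.e. $p_{U_{m}^{\circ\circ}}(R)=\sup\{|T(R)|;\|T\|\leq1\}$ (using that $\mathcal{B}^{-1}$ maps $U_{m}^{\circ}$ onto the closed unit ball of $\mathcal{P}_{\Delta}(^{m}E)^{\prime}$), and then transpose the derivative map: each $S$ with $\|S\|\leq1$ yields $T_{S}=S\circ\hat{d}^{k}(\cdot)(x)$ with $\|T_{S}\|\leq C_{n,k}\|x\|^{n-k}$ by stability for derivatives, and the inequality between the gauges follows in two lines. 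This bypasses the convex-hull bookkeeping, the Bipolar Theorem, and the consistency-of-topologies argument entirely, at the cost of nothing, since the support-function identity is immediate from the displayed formula for $p_{U^{\circ\circ}}$ already in the paper. Your final density step is also more careful than the paper's one-sentence ``extend to the completions'': you actually verify, via the continuous inclusions $\left\Vert \cdot\right\Vert \leq C_{\Delta_{k}}\left\Vert \cdot\right\Vert _{\widetilde{\Delta}}$ and the sup-norm continuity of $\hat{d}^{k}(\cdot)(x)$, that the abstract limit in $\mathcal{P}_{\widetilde{\Delta}}\left( ^{k}E\right)$ coincides with the genuine polynomial $\hat{d}^{k}P(x)$, a point the paper leaves implicit.
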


\begin{proof}
By hypothesis we have%
\begin{equation}
\left\Vert \hat{d}^{k}P\left(  x\right)  \right\Vert _{\Delta}\leq
C_{n,k}\left\Vert P\right\Vert _{\Delta}\left\Vert x\right\Vert ^{n-k},
\label{9jjj}%
\end{equation}
for all $P\in\mathcal{P}_{\Delta}\left(  ^{n}E\right)  $, $k=0,1,\ldots,n$ and
$x\in E.$ Let $U_{k}=\left\{  Q\in\mathcal{P}_{\Delta}\left(  ^{k}E\right)
;\left\Vert Q\right\Vert _{\Delta}\leq1\right\}  $ and let $V_{k}$ be the
absolutely convex hull of $U_{k}.$ Let $p_{V_{k}}$ be the gauge of $V_{k}$
(note that $p_{V_{k}}$ is a norm, since $V_{k}$ is a bounded, balanced and
convex neighborhood of zero). Consider%
\begin{gather*}
\psi:\mathcal{P}_{\Delta}\left(  ^{n}E\right)  \rightarrow\mathcal{P}_{\Delta
}\left(  ^{k}E\right) \\
\psi(P)=\hat{d}^{k}P(x).
\end{gather*}
We know that%
\begin{equation}
p_{V_{k}}(Q)\leq\left\Vert Q\right\Vert _{\Delta} \label{nnbv}%
\end{equation}
for every $Q\in\mathcal{P}_{\Delta}\left(  ^{k}E\right)  $. In fact, since
$V_{k}$ is convex, balanced and absorbing, we have%
\begin{equation}
V_{k}\subset\left\{  Q\in\mathcal{P}_{\Delta}\left(  ^{k}E\right)  ;p_{V_{k}%
}(Q)\leq1\right\}  \label{duas-es}%
\end{equation}
and, for $Q\in\mathcal{P}_{\Delta}\left(  ^{k}E\right)  $, $Q\neq0,$ we have%
\[
\left\Vert \frac{Q}{\left\Vert Q\right\Vert _{\Delta}}\right\Vert _{\Delta
}=1.
\]
Hence,%
\[
\frac{Q}{\left\Vert Q\right\Vert _{\Delta}}\in U_{k}\subset V_{k}%
\]
and
\[
p_{V_{k}}\left(  \frac{Q}{\left\Vert Q\right\Vert _{\Delta}}\right)  \leq1,
\]
which shows (\ref{nnbv}). From (\ref{nnbv}) we get%
\[
p_{V_{k}}(\hat{d}^{k}P(x))\leq\left\Vert \hat{d}^{k}P(x)\right\Vert _{\Delta
}\overset{\text{(\ref{9jjj})}}{\leq}C_{n,k}\left\Vert P\right\Vert _{\Delta
}\left\Vert x\right\Vert ^{n-k}%
\]
for every $P\in\mathcal{P}_{\Delta}\left(  ^{n}E\right)  .$ Now let $Q\in
V_{n}$. Then
\[
Q=%
{\displaystyle\sum\limits_{j=1}^{m}}
\lambda_{j}P_{j}%
\]
with $P_{j}\in U_{n},$ $j=1,\ldots,m$ and $\left\vert \lambda_{1}\right\vert
+\cdots+\left\vert \lambda_{m}\right\vert =1.$ Hence%
\begin{align}
p_{V_{k}}(\hat{d}^{k}Q(x))  &  \leq%
{\displaystyle\sum\limits_{j=1}^{m}}
\left\vert \lambda_{j}\right\vert p_{V_{k}}\left(  \hat{d}^{k}P_{j}(x)\right)
\label{lkjh}\\
&  \leq C_{n,k}\left\Vert P_{j}\right\Vert _{\Delta}\left\Vert x\right\Vert
^{n-k}\nonumber\\
&  \leq C_{n,k}\left\Vert x\right\Vert ^{n-k}.\nonumber
\end{align}
for every $Q\in V_{n}$.

If $P\in\mathcal{P}_{\Delta}\left(  ^{n}E\right)  $, $P\neq0,$ then for every
$\varepsilon>0$ we have%
\[
p_{V_{n}}\left(  \frac{P}{p_{V_{n}}(P)+\varepsilon}\right)  <1
\]
and it follows from the definition of $p_{V_{n}}$ that%
\[
\frac{P}{p_{V_{n}}(P)+\varepsilon}\in1V_{n}=V_{n}.
\]
Hence, from (\ref{lkjh}) we have%
\[
p_{V_{k}}\left(  \hat{d}^{k}\left(  \frac{P}{p_{V_{n}}(P)+\varepsilon}\right)
(x)\right)  \leq C_{n,k}\left\Vert x\right\Vert ^{n-k}%
\]
for every $\varepsilon>0$. Since $\varepsilon>0$ is arbitrary, we obtain%
\begin{equation}
p_{V_{k}}(\hat{d}^{k}P(x))\leq C_{n,k}\left\Vert x\right\Vert ^{n-k}p_{V_{n}%
}(P).\label{quatro-es}%
\end{equation}
From the Bipolar Theorem we know that $U_{n}^{\circ\circ}$ is the weak closure
of $V_{n}$. It is clear that (\ref{duas-es}) holds for $n$ in the place of
$k,$ so we also have%
\begin{equation}
V_{n}\subset\left\{  Q\in\mathcal{P}_{\Delta}\left(  ^{n}E\right)  ;p_{V_{n}%
}(Q)\leq1\right\}  .\nonumber
\end{equation}

Note that $\left(  \mathcal{P}_{\Delta}\left(  ^{n}E\right)  ,p_{V_{n}%
}\right)  $ is consistent with the dual system $\left(  \mathcal{P}_{\Delta
}\left(  ^{n}E\right)  ,\mathcal{P}_{\Delta^{\prime}}\left(  ^{n}E^{\prime
}\right)  \right)  , $ and this means that the dual of $\mathcal{P}_{\Delta
}\left(  ^{n}E\right)  $ endowed with the topologies $p_{V_{n}}$ and $\left\Vert
\cdot\right\Vert _{\Delta}$ is the same. In fact,
\[
p_{U_{n}^{\circ\circ}}\leq p_{V_{n}}\leq p_{U_{n}}\leq\left\Vert
\cdot\right\Vert _{\Delta}%
\]
and $\left(  \mathcal{P}_{\Delta}\left(  ^{n}E\right)  ,p_{U_{n}^{\circ\circ}%
}\right)  $ is dense in $\left(  \mathcal{P}_{\widetilde{\Delta}}\left(
^{n}E\right)  ,\left\Vert \cdot\right\Vert _{\tilde{\Delta}}\right)  . $ Hence%
\begin{align*}
\left(  \mathcal{P}_{\Delta}\left(  ^{n}E\right)  ,p_{U_{n}^{\circ\circ}%
}\right)  ^{\prime}  &  =\left(  \mathcal{P}_{\widetilde{\Delta}}\left(
^{n}E\right)  ,\left\Vert \cdot\right\Vert _{\widetilde{\Delta}}\right)
^{\prime}\\
&  =\left(  \mathcal{P}_{\Delta^{\prime}}(^{n}E^{\prime}),\left\Vert
\cdot\right\Vert _{\Delta^{\prime}}\right) \\
&  =\left(  \mathcal{P}_{\Delta}\left(  ^{n}E\right)  ,\left\Vert
\cdot\right\Vert _{\Delta}\right)  ^{\prime}.
\end{align*}

From \cite[3.1 p. 130]{SHA} we know that the closure of a convex set is the
same no matter how we choose the topology (consistent with the dual system).
Since $V_{n}$ is convex, we have
\begin{align*}
\left\{  Q\in\mathcal{P}_{\Delta}\left(  ^{n}E\right)  ;p_{V_{n}}%
(Q)\leq1\right\}   &  =\overline{\left\{  Q\in\mathcal{P}_{\Delta}\left(
^{n}E\right)  ;p_{V_{n}}(Q)\leq1\right\}  }^{p_{V_{n}}}\\
&  =\overline{\left\{  Q\in\mathcal{P}_{\Delta}\left(  ^{n}E\right)
;p_{V_{n}}(Q)\leq1\right\}  }^{\sigma\left(  \mathcal{P}_{\Delta}\left(
^{n}E\right)  ,\mathcal{P}_{\Delta^{\prime}}\left(  ^{n}E^{\prime}\right)
\right)  }.
\end{align*}
Hence%
\begin{equation}
U_{n}^{\circ\circ}=\overline{V_{n}}^{\sigma\left(  \mathcal{P}_{\Delta}\left(
^{n}E\right)  ,\mathcal{P}_{\Delta^{\prime}}\left(  ^{n}E^{\prime}\right)
\right)  }=\left\{  Q\in\mathcal{P}_{\Delta}\left(  ^{n}E\right)  ;p_{V_{n}%
}(Q)\leq1\right\}  . \label{cinco-es}%
\end{equation}
Thus we have%
\begin{align}
p_{U_{k}^{\circ\circ}}(\psi(P))  &  \leq p_{V_{k}}(\psi(P))\label{seis-es}\\
&  \overset{\text{(\ref{quatro-es})}}{\leq}C_{n,k}\left\Vert x\right\Vert
^{n-k}p_{V_{n}}(P)\nonumber\\
&  \overset{\text{(\ref{cinco-es})}}{\leq}C_{n,k}\left\Vert x\right\Vert
^{n-k}\nonumber
\end{align}
for every $P\in U_{n}^{\circ\circ}.$

Now, let $P\in\mathcal{P}_{\Delta}\left(  ^{n}E\right)  $, $P\neq0$. From the
argument used just after (\ref{may15a}) we have%
\[
\frac{P}{p_{U_{n}^{\circ\circ}}(P)}\in U_{n}^{\circ\circ}%
\]
and hence%
\[
p_{U_{k}^{\circ\circ}}\left(  \hat{d}^{k}\left(  \frac{P}{p_{U_{n}^{\circ
\circ}}(P)}\right)  (x)\right)  \overset{\text{(\ref{seis-es})}}{\leq}%
C_{n,k}\left\Vert x\right\Vert ^{n-k}%
\]
and we finally conclude that
\[
p_{U_{k}^{\circ\circ}}\left(  \psi(P)\right)  \leq C_{n,k}\left\Vert
x\right\Vert ^{n-k}p_{U_{n}^{\circ\circ}}(P).
\]
This implies that $\psi$ is a continuous linear mapping from $\left(
\mathcal{P}_{\Delta}\left(  ^{n}E\right)  ,p_{U_{n}^{\circ\circ}}\right)  $
into $\left(  \mathcal{P}_{\Delta}\left(  ^{k}E\right)  ,p_{U_{k}^{\circ\circ
}}\right)  .$ We can now extend $\psi$ to the completions $\left(
\mathcal{P}_{\Delta}\left(  ^{n}E\right)  ,\left\Vert \cdot\right\Vert
_{\widetilde{\Delta}}\right)  $ and $\left(  \mathcal{P}_{\Delta}\left(
^{k}E\right)  ,\left\Vert \cdot\right\Vert _{\widetilde{\Delta}}\right)  $ and
the proof is done.
\end{proof}

\begin{corollary}
\label{corollary_holom_type}If $\left(  \mathcal{P}_{\Delta}\left(
^{n}E\right)  \right)  _{n=0}^{\infty}$ is stable for derivatives with
$C_{n,k}\leq\frac{n!}{\left(  n-k\right)  !},$ then $\left(  \mathcal{P}%
_{\widetilde{\Delta}}\left(  ^{n}E\right)  \right)  _{n=0}^{\infty}$ is a
holomorphy type.
\end{corollary}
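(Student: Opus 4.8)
The plan is to verify directly the three defining conditions of a holomorphy type (Definition \ref{Definition tipo holomorfia}) for the sequence $\left(\mathcal{P}_{\widetilde{\Delta}}(^{n}E)\right)_{n=0}^{\infty}$, taking $F=\mathbb{C}$. First note that each $\mathcal{P}_{\widetilde{\Delta}}(^{n}E)$ is, by construction, the completion of a normed space, hence a Banach space, so we already have a sequence of Banach spaces. Condition (1) is then immediate from inequality (\ref{desig_norma_usual}) and the discussion preceding Definition \ref{def555}, which exhibit $\mathcal{P}_{\widetilde{\Delta}}(^{n}E)$ as a (continuously included) linear subspace of $\mathcal{P}(^{n}E)$.

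For condition (2), I would isolate the case $n=0$. Since $\mathcal{P}_{\Delta}(^{0}E)=\mathbb{C}$ (Definition \ref{Def_stability_for_derivatives}) is one-dimensional, it is complete, hence a Banach space; the remark following Definition \ref{def555} then applies and yields $\mathcal{P}_{\widetilde{\Delta}}(^{0}E)=\mathcal{P}_{\Delta}(^{0}E)=\mathbb{C}$ with $\left\Vert \cdot\right\Vert_{\widetilde{\Delta}}=\left\Vert \cdot\right\Vert_{\Delta}$. Thus $\mathcal{P}_{\widetilde{\Delta}}(^{0}E)$ coincides with $\mathcal{P}(^{0}E)=\mathbb{C}$ as a normed vector space, which is exactly condition (2).

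The one substantive condition is (3), but most of its content is already supplied by Theorem \ref{Theo_stability_for_derivatives}: for every $P\in\mathcal{P}_{\widetilde{\Delta}}(^{n}E)$, $k\leq n$ and $a\in E$, that result gives $\hat{d}^{k}P(a)\in\mathcal{P}_{\widetilde{\Delta}}(^{k}E)$ together with the bound $\left\Vert \hat{d}^{k}P(a)\right\Vert_{\widetilde{\Delta}}\leq C_{n,k}\left\Vert P\right\Vert_{\widetilde{\Delta}}\left\Vert a\right\Vert^{n-k}$. Dividing by $k!$ and inserting the hypothesis $C_{n,k}\leq\frac{n!}{(n-k)!}$ turns the constant into $\frac{C_{n,k}}{k!}\leq\frac{n!}{k!(n-k)!}=\binom{n}{k}\leq 2^{n}$, so that
\[
\left\Vert \frac{1}{k!}\hat{d}^{k}P(a)\right\Vert_{\widetilde{\Delta}}\leq 2^{n}\left\Vert P\right\Vert_{\widetilde{\Delta}}\left\Vert a\right\Vert^{n-k}.
\]
Hence condition (3) holds with $\sigma=2\geq 1$, completing the verification.

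I do not anticipate a genuine obstacle, since the heavy lifting (stability of the derivatives under the gauge/completion construction) was carried out in Theorem \ref{Theo_stability_for_derivatives}; what remains reduces to the elementary binomial estimate $\binom{n}{k}\leq 2^{n}$ and the choice $\sigma=2$. The only point demanding a little care is condition (2) at $n=0$, where one must be certain that passing to the gauge $p_{U^{\circ\circ}}$ and then completing does not distort the scalar field $\mathbb{C}$ — a point settled by the finite-dimensionality of $\mathbb{C}$ via the aforementioned remark.
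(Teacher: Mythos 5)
Your proposal is correct and follows essentially the same route as the paper: conditions (1) and (2) are dispatched as immediate (the paper simply calls them ``clear''), and condition (3) is obtained exactly as you do, by invoking Theorem \ref{Theo_stability_for_derivatives}, dividing by $k!$, and using $\frac{n!}{k!(n-k)!}\leq 2^{n}$ to take $\sigma=2$. Your extra care with the $n=0$ case via the remark following Definition \ref{def555} is a reasonable elaboration of a point the paper leaves implicit.
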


\begin{proof}
Since conditions $(1)$ and $(2)$ of Definition
\ref{Definition tipo holomorfia} are clear, we only have to prove $(3).$ We
will show that for $\sigma=2,$ we obtain $(3)$. Let $P\in\mathcal{P}%
_{\widetilde{\Delta}}\left(  ^{n}E\right)  ,$ $k\in\mathbb{N}_{0},$\textrm{
}$k\leq n$ and $x\in E.$ By Theorem \ref{Theo_stability_for_derivatives} we
have $\hat{d}^{k}P\left(  x\right)  \in\mathcal{P}_{\widetilde{\Delta}}\left(
^{k}E\right)  $ and%
\[
\left\Vert \hat{d}^{k}P\left(  x\right)  \right\Vert _{\widetilde{\Delta}}%
\leq\frac{n!}{\left(  n-k\right)  !}\left\Vert P\right\Vert _{\widetilde
{\Delta}}\left\Vert x\right\Vert ^{n-k}.
\]
Hence%
\[
\left\Vert \frac{1}{k!}\hat{d}^{k}P\left(  x\right)  \right\Vert
_{\widetilde{\Delta}}\leq\frac{n!}{k!\left(  n-k\right)  !}\left\Vert
P\right\Vert _{\widetilde{\Delta}}\left\Vert x\right\Vert ^{n-k}\leq
2^{n}\left\Vert P\right\Vert _{\widetilde{\Delta}}\left\Vert x\right\Vert
^{n-k},
\]
as we wanted to show.
\end{proof}

Corollary \ref{corollary_holom_type} tells us how to use the results of this
section to obtain a holomorphy type $\left(  \mathcal{P}_{\widetilde{\Delta}%
}\left(  ^{n}E\right)  \right)  _{n=0}^{\infty}$ from $\left(  \mathcal{P}%
_{\Delta}\left(  ^{n}E\right)  \right)  _{n=0}^{\infty}.$ This result will be
useful in Section \ref{sec4}.

\section{\label{sec3} Prerequisites for the applications}

We are interested in applying the results of the previous section to obtain
new examples of hypercyclic convolution operators and new existence and
approximation results for convolution equations. We start presenting a little
of the state of the art of both topics.

If $X$ is a topological space, a map $f:X\rightarrow X$ is \emph{hypercyclic}
if the set $\{x,f(x),f^{2}(x),\ldots\}$ is dense in $X$ for some $x\in X$. In
this case, $x$ is said to be a \emph{hypercyclic vector for $f$}. Hypercyclic
translation and differentiation operators on spaces of entire functions of one
complex variable were first investigated by Birkhoff \cite{birkhoff} and
MacLane \cite{maclane}. Godefroy and Shapiro \cite{godefroy} pushed these
results quite further by proving that every convolution operator on spaces of
entire functions of several complex variables which is not a scalar multiple
of the identity is hypercyclic. For the theory of hypercyclic operators and
its ramifications we refer to \cite{bay, BPS1, goswinBAMS} and references
therein. We remark that several results on the hypercyclicity of operators on
spaces of entire functions on infinitely many complex variables appeared later
(see, e.g., \cite{bay2, BBFJ, bes2012, CDSjmaa, chan, gethner, goswinBAMS,
MPS, peterssonjmaa}). In 2007, Carando, Dimant and Muro \cite{CDSjmaa} proved
some general results, including a solution to a problem posed in
\cite{aronmarkose}, that encompass as particular cases several of the above
mentioned results. In \cite{BBFJ}, using the theory of holomorphy types,
Bertoloto, Botelho, F\'{a}varo and Jatob\'{a} generalized the results of
\cite{CDSjmaa} to a more general setting. For instance, the following theorem
from \cite{BBFJ}, when restricted to $E=\mathbb{C}^{n}$ and $\mathcal{P}%
_{\Theta}(^{m}\mathbb{C}^{n})=\mathcal{P}(^{m}\mathbb{C}^{n})$ recovers the
famous result of Godefroy and Shapiro \cite{godefroy} on the hypercyclicity of
convolution operators on $\mathcal{H}(\mathbb{C}^{n})$:

\medskip

\textbf{Theorem } \cite[Theorem 2.7]{BBFJ} Let $E^{\prime}$ be separable and
$(\mathcal{P}_{\Theta}(^{m}E))_{m=0}^{\infty}$ be a $\pi_{1}$-holomorphy type
from $E$ to $\mathbb{C}$. Then every convolution operator on $\mathcal{H}%
_{\Theta b}(E)$ which is not a scalar multiple of the identity is hypercyclic.

\medskip

However, the spaces $\mathcal{P}_{\Theta}(^{m}E)$ need to be Banach spaces and
thus $\mathcal{H}_{\Theta b}(E)$ becomes a Fr\'{e}chet space. When the spaces
$\mathcal{P}_{\Theta}(^{m}E)$ are quasi-Banach, the respective space
$\mathcal{H}_{\Theta b}(E)$ is not Fr\'{e}chet and then some arguments used to
prove the result above do not work.

Also, several spaces of holomorphic mappings, which have arisen with the
development of the theory of polynomial and operator ideals (see, for instance
\cite{defant, Pietsch, pietsch}), are not Fr\'{e}chet spaces and thus the
investigation of this more general setting seems to be relevant.

The same problem happens in the investigation of existence and approximation
results for convolution equations. This line of investigation was initiated by
Malgrange \cite{Malgrange} and developed by several authors (see, for instance
\cite{Col-Mat, cp, cgp, DW, DW2, Fa, FaBelg, favaro-jatoba1, favaro-jatoba2,
G, Gupta, Martineau, Matos-F, Matos-Z, Matos-Z2, Matos-livro, MN, Nach-B}). In
this context, a result of \cite{favaro-jatoba1} (refined in \cite{BBFJ}) gives
a general method to prove existence and approximation results for convolution
equations defined on certain spaces of entire functions of bounded type. The
general process to prove existence and approximation results for convolution
equations on $\mathcal{H}_{\Theta b}(E)$ involves three main steps:

(i) To establish an isomorphism between the topological dual of $\mathcal{H}%
_{\Theta b}(E)$ and a certain space $\mathcal{E}$ of exponential-type
holomorphic functions via Borel transform.

(ii) To prove a division theorem for holomorphic functions on $\mathcal{E}$,
that is, if $fg=h$, $g\ne0$, $g,h\in\mathcal{E}$, $f\in\mathcal{H}(E^{\prime
})$, then it is possible to show that $f\in\mathcal{E}$.

(iii) To handle the results of (i) and (ii) and use Hahn--Banach type theorems
and the Dieudonn\'e--Schwartz theorem that appears in \cite{DS}.

The absence of Hahn--Banach and Dieudonn\'{e}--Schwartz theorems for more
general settings is a crucial obstacle for the development of a general
theory. For some classes of polynomials $\mathcal{P}_{\Theta}(^{m}E) $ there
are duality results via Borel transform but the step (iii) can not be accomplished.

The results of Section 1, together with the results of \cite{BBFJ}, allow us
to deal with these problematic cases of hypercyclicity and existence and
approximation results for convolution equations. We shall use the results of
Section 1 in such way that steps (i)-(iii) are applicable. It is worth
mentioning that the proof of division theorems (step (ii)) for holomorphic
functions is always a hardwork (see, e.g., \cite{cp, cgp,FaBelg, hhh, loja,
Martineau, Matos-C2}).

Now we present some preliminary results for the applications.

\begin{definition}
\textrm{\textrm{\cite[Definition 2.2]{favaro-jatoba1}%
\label{Definition f holomorfia} \textrm{Let $(\mathcal{P}_{\Theta}%
(^{m}E;F))_{m=0}^{\infty}$ be a holomorphy type from $E$ to $F$. A given
$f\in\mathcal{H}(E;F)$ is said to be of \emph{$\Theta$-holomorphy type of
bounded type} if } } }

\item[$(i)$] \textrm{\textrm{\textrm{$\hat{d}^{m}f(0)\in\mathcal{P}_{\Theta
}(^{m}E;F)$, for all $m\in\mathbb{N}_{0}$, } } }

\item[$(ii)$] \textrm{\textrm{\textrm{$\lim_{m\rightarrow\infty}\left(
\frac{1}{m!}\Vert\hat{d}^{m}f(0)\Vert_{\Theta}\right)  ^{\frac{1}{m}}=0.$ } }
}

\textrm{\textrm{\textrm{The vector subspace of $\mathcal{H}(E;F)$ of all such
$f$ is denoted by $\mathcal{H}_{\Theta b}(E;F)$ and becomes a Fr\'{e}chet
space with the topology $\tau_{\Theta}$ generated by the family of seminorms
\[
f\in\mathcal{H}_{\Theta b}(E;F)\mapsto\Vert f\Vert_{\Theta,\rho}=\sum
_{m=0}^{\infty}\frac{\rho^{m}}{m!}\Vert\hat{d}^{m}f(0)\Vert_{\Theta},
\]
for all $\rho>0$ (see \cite[Proposition 2.3]{favaro-jatoba1}). } } }

\textrm{\textrm{\textrm{When $F=\mathbb{C}$ we represent $\mathcal{H}_{\Theta
b}(E;\mathbb{C}):=\mathcal{H}_{\Theta b}(E).$ } } }
\end{definition}

The next two definitions are slight variations of the concepts of $\pi_{1}$
and $\pi_{2}$ holomorphy types (originally introduced in \cite{favaro-jatoba1}%
) and they can be found in \cite{BBFJ}.

\begin{definition}
\textrm{\textrm{\textrm{\label{pi-tipo de holomorfia} A holomorphy type
$(\mathcal{P}_{\Theta}(^{m}E;F))_{m=0}^{\infty}$ from $E$ to $F$ is said to be
a \emph{$\pi_{1}$-holomorphy type} if the following conditions hold: } } }

\textrm{\textrm{(i)\textrm{ Polynomials of finite type belong to
$(\mathcal{P}_{\Theta}(^{m}E;F))_{m=0}^{\infty}$ and there exists $K>0$ such
that
\[
\Vert\phi^{m}\cdot b\Vert_{\Theta}\leq K^{m}\Vert\phi\Vert^{m}\cdot\Vert
b\Vert
\]
for all $\phi\in E^{\prime}$, $b\in F$ and $m\in\mathbb{N}$; } } }

\textrm{\textrm{(ii)\textrm{ For each $m\in\mathbb{N}_{0}$, $\mathcal{P}%
_{f}(^{m}E;F)$ is dense in $(\mathcal{P}_{\Theta}(^{m}E;F),\Vert\cdot
\Vert_{\Theta})$. } } }
\end{definition}

\begin{definition}
\textrm{\textrm{\textrm{\noindent\ A holomorphy type $(\mathcal{P}_{\Theta
}(^{m}E))_{m=0}^{\infty}$ from $E$ to $\mathbb{C}$ is said to be a
\emph{$\pi_{2}$-holomorphy type} if for each $T\in\left[  \mathcal{H}_{\Theta
b}(E)\right]  ^{\prime}$, $m\in\mathbb{N}_{0}$ and $k\in\mathbb{N}_{0},$
$k\leq m$, the following conditions hold: } } }

\textrm{\textrm{(i)\textrm{ If $P\in\mathcal{P}_{\Theta}(^{m}E)$ and $A\colon
E^{m}\longrightarrow\mathbb{C}$ is the unique continuous symmetric $m$-linear
mapping such that $P=\hat{A},$ then the $\left(  m-k\right)  $-homogeneous
polynomial%
\begin{align*}
T\left(  \widehat{A(\cdot)^{k}}\right)  \colon E  &  \longrightarrow
\mathbb{C}\\
y  &  \mapsto T\left(  A(\cdot)^{k}y^{m-k}\right)
\end{align*}
belongs to $\mathcal{P}_{\Theta}(^{m-k}E);$ } } }

\textrm{\textrm{(ii)\textrm{ For constants $C,\rho>0$ such that
\[
\left\vert T\left(  f\right)  \right\vert \leq C\left\Vert f\right\Vert
_{\Theta,\rho}~for~every~f\in\mathcal{H}_{\Theta b}(E)
\]
(which exist since $T\in\left[  \mathcal{H}_{\Theta b}(E)\right]  ^{\prime}),$
there is a constant }$K>0$\textrm{ such that
\[
\Vert T(\widehat{A(\cdot)^{k}})\Vert_{\Theta}\leq C\cdot K^{m}\rho^{k}\Vert
P\Vert_{\Theta}~for~every~P\in\mathcal{P}_{\Theta}(^{m}E).
\]
} } }
\end{definition}

When we write ``$\Theta$ is a $\pi_{1}$-$\pi_{2}$-holomorphy type'', it means
that $\Theta$ is a $\pi_{1}$ and a $\pi_{2}$-holomorphy type.

\medskip

Let $\Theta$ be a $\pi_{1}$-holomorphy type from $E$ to $F.$ It is clear that
the Borel transform
\[
\mathcal{B}_{\Theta}\colon\left[  \mathcal{P}_{\Theta}(^{m}E;F)\right]
^{\prime}\longrightarrow\mathcal{P}(^{m}E^{\prime};F^{\prime})~,~\mathcal{B}%
_{\Theta}T(\phi)(y)=T(\phi^{m}y),
\]
for $T\in\left[  \mathcal{P}_{\Theta}(^{m}E;F)\right]  ^{\prime}$, $\phi\in
E^{\prime}$ and $y\in F$, is well-defined and linear. Moreover, by (i) and
(ii) of Definition \ref{pi-tipo de holomorfia}, $\mathcal{B}_{\Theta}$ is
continuous and injective. So, denoting the range of $\mathcal{B}_{\Theta}$ in
$\mathcal{P}(^{m}E^{\prime};F^{\prime})$ by $\mathcal{P}_{\Theta^{\prime}%
}(^{m}E^{\prime};F^{\prime})$, the correspondence
\[
\mathcal{B}_{\Theta}T\in\mathcal{P}_{\Theta^{\prime}}(^{m}E^{\prime}%
;F^{\prime})\mapsto\Vert\mathcal{B}_{\Theta}T\Vert_{\Theta^{\prime}}:=\Vert
T\Vert
\]
defines a norm on $\mathcal{P}_{\Theta^{\prime}}(^{m}E^{\prime};F^{\prime})$.

In this fashion the spaces $\left(  \left[  \mathcal{P}_{\Theta}%
(^{m}E;F)\right]  ^{\prime}\;,\Vert\cdot\Vert\right)  $ and $\left(
\mathcal{P}_{\Theta^{\prime}}(^{m}E^{\prime};F^{\prime}),\;\Vert\cdot
\Vert_{\Theta^{\prime}}\right)  $ are isometrically isomorphic. For more
details on this isomorphism we refer \cite{BBFJ} or \cite{favaro-jatoba1}.

\begin{definition}
\textrm{\textrm{\cite[Definition 2.6]{BBFJ}\textrm{ Let $\Theta$ be a
holomorphy type from $E$ to $\mathbb{C}$. \newline(a) For $a\in E$ and
$f\in\mathcal{H}_{\Theta b}(E)$, the \textit{translation of $f$ by $a$} is
defined by
\[
\tau_{a}f\colon E\longrightarrow\mathbb{C}~,~\left(  \tau_{a}f\right)  \left(
x\right)  =f\left(  x-a\right)  .
\]
By \cite[Proposition 2.2]{favaro-jatoba1} we have $\tau_{a}f\in\mathcal{H}%
_{\Theta b}(E).$\newline(b) A continuous linear operator $L\colon
\mathcal{H}_{\Theta b}(E)\longrightarrow\mathcal{H}_{\Theta b}(E)$ is called a
\emph{convolution operator on }$\mathcal{H}_{\Theta b}(E)$ if it is
translation invariant, that is,
\[
L(\tau_{a}f)=\tau_{a}(L(f))
\]
for all $a\in E$ and $f\in\mathcal{H}_{\Theta b}(E).$\newline(c) For each
functional $T\in\lbrack\mathcal{H}_{\Theta b}(E)]^{\prime}$, the operator
$\bar{\Gamma}_{\Theta}(T)$ is defined by
\[
\bar{\Gamma}_{\Theta}(T)\colon\mathcal{H}_{\Theta b}(E)\longrightarrow
\mathcal{H}_{\Theta b}(E)~,~\bar{\Gamma}_{\Theta}(T)(f)=T\ast f,
\]
where the \textit{convolution product} $T\ast f$ is defined by
\[
\left(  T\ast f\right)  \left(  x\right)  =T\left(  \tau_{-x}f\right)
~for~every~x\in E.
\]
(d) $\delta_{0}\in\lbrack\mathcal{H}_{\Theta b}(E)]^{\prime}$ is the linear
functional defined by
\[
\delta_{0}\colon\mathcal{H}_{\Theta b}(E)\longrightarrow\mathbb{C}%
~,~\delta_{0}(f)=f(0).
\]
}} }
\end{definition}

\subsection{Hypercyclicity results}


Using the techniques developed in Section \ref{sec2}, in the final section we
shall provide new nontrivial applications of the following hypercyclicity results:

\begin{theorem}
\label{main1}\cite[Theorem 2.7]{BBFJ} Let $E^{\prime}$ be separable and
$(\mathcal{P}_{\Theta}(^{m}E))_{m=0}^{\infty}$ be a $\pi_{1}$-holomorphy type
from $E$ to $\mathbb{C}$. Then every convolution operator on $\mathcal{H}%
_{\Theta b}(E)$ which is not a scalar multiple of the identity is hypercyclic.
\end{theorem}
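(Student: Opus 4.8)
The plan is to verify the hypotheses of the Godefroy--Shapiro eigenvalue criterion for hypercyclicity (see \cite{godefroy}), which is available here because the separability of $E^{\prime}$ forces $\mathcal{H}_{\Theta b}(E)$ to be a separable Fr\'echet space: the finite type polynomials, which are generated from $E^{\prime}$, are dense by condition (ii) of Definition \ref{pi-tipo de holomorfia}. The eigenvectors will be the exponential functions $e_{\phi}\colon x\mapsto e^{\phi(x)}$ for $\phi\in E^{\prime}$. First I would check that $e_{\phi}\in\mathcal{H}_{\Theta b}(E)$ for every $\phi\in E^{\prime}$. Since $\hat{d}^{m}e_{\phi}(0)=\phi^{m}$, part (i) of the definition of $\pi_{1}$-holomorphy type gives $\Vert\phi^{m}\Vert_{\Theta}\leq K^{m}\Vert\phi\Vert^{m}$, whence
\[
\left(  \tfrac{1}{m!}\Vert\hat{d}^{m}e_{\phi}(0)\Vert_{\Theta}\right)
^{1/m}\leq\frac{K\Vert\phi\Vert}{(m!)^{1/m}}\to0\quad(m\to\infty),
\]
so condition (ii) of Definition \ref{Definition f holomorfia} holds.

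Next I would compute the action of a convolution operator on exponentials. Writing $L=\bar{\Gamma}_{\Theta}(T)$ for $T\in\lbrack\mathcal{H}_{\Theta b}(E)]^{\prime}$, the identity $\tau_{-x}e_{\phi}=e^{\phi(x)}e_{\phi}$ yields
\[
L(e_{\phi})(x)=(T\ast e_{\phi})(x)=T(\tau_{-x}e_{\phi})=e^{\phi(x)}
T(e_{\phi})=T(e_{\phi})\,e_{\phi}(x),
\]
so each $e_{\phi}$ is an eigenvector of $L$ with eigenvalue $\mathcal{L}(\phi):=T(e_{\phi})$. Expanding $e_{\phi}=\sum_{m}\phi^{m}/m!$ (a convergent series in $\mathcal{H}_{\Theta b}(E)$) shows that $\mathcal{L}$ is the composition of the holomorphic map $\phi\mapsto e_{\phi}$ from $E^{\prime}$ into $\mathcal{H}_{\Theta b}(E)$ with the continuous functional $T$; in particular $\mathcal{L}$ is entire on $E^{\prime}$, and it is tied to the Borel transform $\mathcal{B}_{\Theta}T$. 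I would then argue that $L$ is a scalar multiple of the identity if and only if $\mathcal{L}$ is constant. Assuming $L$ is not a scalar multiple of the identity, $\mathcal{L}$ is a non-constant entire function, so by the open mapping property of non-constant holomorphic functions the two open sets $A_{-}=\{\phi:|\mathcal{L}(\phi)|<1\}$ and $A_{+}=\{\phi:|\mathcal{L}(\phi)|>1\}$ are both non-empty.

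The main obstacle is establishing that, for any non-empty open set $A\subseteq E^{\prime}$, the linear span of $\{e_{\phi}:\phi\in A\}$ is dense in $\mathcal{H}_{\Theta b}(E)$. I would prove this by Hahn--Banach: if $S\in\lbrack\mathcal{H}_{\Theta b}(E)]^{\prime}$ annihilates every $e_{\phi}$ with $\phi\in A$, then the entire function $\phi\mapsto S(e_{\phi})$ vanishes on the open set $A$, hence vanishes identically by the identity theorem for holomorphic functions on the connected space $E^{\prime}$. Reading off the Taylor coefficients along each radial direction $t\mapsto S(e_{t\phi})=\sum_{m}\tfrac{t^{m}}{m!}S(\phi^{m})$ gives $S(\phi^{m})=0$ for all $m$ and all $\phi$; equivalently the Borel transform $\mathcal{B}_{\Theta}S$ vanishes, and by its injectivity (guaranteed by the $\pi_{1}$ conditions, as noted after Definition \ref{pi-tipo de holomorfia}) together with the density of finite type polynomials we conclude $S=0$.

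Applying this density statement to $A_{-}$ and to $A_{+}$ shows that the eigenvectors with eigenvalues of modulus strictly less than $1$, and those with eigenvalues of modulus strictly greater than $1$, each span a dense subspace of $\mathcal{H}_{\Theta b}(E)$. The Godefroy--Shapiro eigenvalue criterion then yields that $L$ is hypercyclic, completing the proof.
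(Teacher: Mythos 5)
The paper does not prove this statement: it is quoted verbatim from \cite[Theorem 2.7]{BBFJ} as a prerequisite for Section \ref{sec4}, and the accompanying remark records only that the proof there rests on the Kitai/Gethner--Shapiro hypercyclicity criterion. Your argument is the standard Godefroy--Shapiro eigenvalue argument adapted to $\mathcal{H}_{\Theta b}(E)$ (exponentials as eigenvectors, Hahn--Banach plus the identity theorem and injectivity of the Borel transform for the density of their spans), which is essentially the route taken in the cited source and is correct; the one justification you should repair is the non-emptiness of $A_{-}$ and $A_{+}$, which follows not from the open mapping property (an open connected range could a priori avoid the closed unit disc) but from Liouville's theorem applied along complex lines to $\mathcal{L}$ and to $1/\mathcal{L}$.
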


\begin{theorem}
\label{main2}\cite[Theorem 2.8]{BBFJ} Let $E^{\prime}$ be separable,
$(\mathcal{P}_{\Theta}(^{m}E))_{m=0}^{\infty}$ be a $\pi_{1}$-$\pi_{2}%
$-holomorphy type and $T\in\lbrack\mathcal{H}_{\Theta b}(E)]^{\prime}$ be a
linear functional which is not a scalar multiple of $\delta_{0}$. Then
$\bar{\Gamma}_{\Theta}(T)$ is a convolution operator that is not a scalar
multiple of the identity, hence hypercyclic.
\end{theorem}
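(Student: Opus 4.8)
The plan is to check three things in sequence: that $\bar{\Gamma}_{\Theta}(T)$ is a convolution operator on $\mathcal{H}_{\Theta b}(E)$, that it is not a scalar multiple of the identity, and then to invoke Theorem \ref{main1} to obtain hypercyclicity. The last step is immediate, since a $\pi_{1}$-$\pi_{2}$-holomorphy type is in particular a $\pi_{1}$-holomorphy type and $E'$ is assumed separable; so once the first two points are in place, Theorem \ref{main1} applies verbatim to conclude that the operator is hypercyclic.

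The fact that $\bar{\Gamma}_{\Theta}(T)$ is not a scalar multiple of the identity is the easy point, and I would prove it by contraposition. Suppose $\bar{\Gamma}_{\Theta}(T) = \lambda\,\mathrm{Id}$ for some $\lambda \in \mathbb{C}$, i.e.\ $T\ast f = \lambda f$ for every $f \in \mathcal{H}_{\Theta b}(E)$. Evaluating at $x = 0$ and using $(T\ast f)(0) = T(\tau_{0}f) = T(f)$ gives $T(f) = \lambda f(0) = \lambda\,\delta_{0}(f)$ for all $f$, that is $T = \lambda\,\delta_{0}$, contradicting the hypothesis that $T$ is not a scalar multiple of $\delta_{0}$.

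Verifying that $\bar{\Gamma}_{\Theta}(T)$ is a convolution operator splits into translation invariance, which is routine, and continuity into $\mathcal{H}_{\Theta b}(E)$, which is the crux. Translation invariance follows from the computation
\[
(T\ast\tau_{a}f)(x) = T(\tau_{-x}\tau_{a}f) = T(\tau_{a-x}f) = (T\ast f)(x-a) = \bigl(\tau_{a}(T\ast f)\bigr)(x),
\]
valid for all $a,x \in E$, so $\bar{\Gamma}_{\Theta}(T)(\tau_{a}f) = \tau_{a}\bigl(\bar{\Gamma}_{\Theta}(T)(f)\bigr)$. The delicate part is to show that $T\ast f$ genuinely belongs to $\mathcal{H}_{\Theta b}(E)$ and that $f \mapsto T\ast f$ is continuous, and this is exactly where the $\pi_{2}$ conditions enter.

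For the continuity I would expand $T\ast f$ into its monomials at the origin. Writing $f = \sum_{m} P_{m}$ with $P_{m} = \tfrac{1}{m!}\hat{d}^{m}f(0) = \hat{A}_{m}$ and using the symmetric binomial expansion $A_{m}\bigl((z+x)^{m}\bigr) = \sum_{k=0}^{m}\binom{m}{k}A_{m}(z^{k}x^{m-k})$ inside $(T\ast f)(x) = T\bigl(z\mapsto f(z+x)\bigr)$, one identifies the degree-$j$ homogeneous part of $T\ast f$ (taking $m-k=j$) as $\sum_{m\ge j}\binom{m}{j}\,T(\widehat{A_{m}(\cdot)^{\,m-j}})$, where each summand is the $(m-j)$-homogeneous polynomial furnished by condition (i) of the $\pi_{2}$-holomorphy type. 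Condition (ii) then supplies the norm bound $\|T(\widehat{A_{m}(\cdot)^{\,m-j}})\|_{\Theta} \le C\,K^{m}\rho^{\,m-j}\|P_{m}\|_{\Theta}$. I expect the bookkeeping that follows to be the technical heart of the proof: summing these estimates over $m\ge j$ against the binomial coefficients and the exponential-type decay of $\|P_{m}\|_{\Theta}$ coming from $f\in\mathcal{H}_{\Theta b}(E)$, one must check that each $\hat{d}^{j}(T\ast f)(0)$ lies in $\mathcal{P}_{\Theta}(^{j}E)$ and that $\|T\ast f\|_{\Theta,\sigma}$ is dominated by some $\|f\|_{\Theta,\rho'}$, which simultaneously yields $T\ast f\in\mathcal{H}_{\Theta b}(E)$ and the continuity of $\bar{\Gamma}_{\Theta}(T)$. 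Once this rearrangement of the double series indexed by $(m,k)$ into the Taylor series of $T\ast f$ is carried out with careful control of the constants $C,K,\rho$, the three points combine and Theorem \ref{main1} finishes the argument.
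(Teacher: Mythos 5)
This theorem is not proved in the paper: it is imported verbatim from \cite[Theorem 2.8]{BBFJ} as a prerequisite for Section \ref{sec4}, so there is no in-paper argument to compare yours against. That said, your outline reconstructs the standard proof from that reference correctly in all three steps: the reduction to Theorem \ref{main1} (a $\pi_1$-$\pi_2$-holomorphy type is in particular $\pi_1$), the evaluation at $0$ showing that $\bar{\Gamma}_{\Theta}(T)=\lambda\,\mathrm{Id}$ would force $T=\lambda\delta_0$, and the identification of the degree-$j$ Taylor term of $T\ast f$ as $\sum_{m\geq j}\binom{m}{j}T\left(\widehat{A_m(\cdot)^{m-j}}\right)$, each summand controlled by the $\pi_2$ estimate $CK^m\rho^{m-j}\Vert P_m\Vert_{\Theta}$, which after summation against $\sigma^j$ yields $\Vert T\ast f\Vert_{\Theta,\sigma}\leq C\Vert f\Vert_{\Theta,K(\rho+\sigma)}$ and hence both membership in $\mathcal{H}_{\Theta b}(E)$ and continuity. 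The one point you explicitly defer is also the one that genuinely needs an argument: you must justify applying $T$ term by term to the double series, i.e.\ that the Taylor series of $\tau_{-x}f$ converges to $\tau_{-x}f$ in the $\tau_{\Theta}$ topology (so that $T$ commutes with the sum) and that the resulting series $\sum_{m\geq j}\binom{m}{j}T\left(\widehat{A_m(\cdot)^{m-j}}\right)$ converges in the Banach space $\mathcal{P}_{\Theta}(^jE)$ to $\frac{1}{j!}\hat{d}^j(T\ast f)(0)$. This is exactly what the auxiliary results of \cite{BBFJ} and \cite{favaro-jatoba1} (e.g.\ the proposition guaranteeing $\tau_a f\in\mathcal{H}_{\Theta b}(E)$ and the convergence of the Taylor series in $\tau_{\Theta}$) supply, and it is routine once your norm estimate is in hand; with that filled in, your proof is complete and is essentially the argument of the cited source.
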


\begin{remark}
\rm Since the proofs of Theorems \ref{main1} and \ref{main2} are based on
the hypercyclicity criterion obtained by Kitai \cite{kitai} and later on
rediscovered by Gethner and Shapiro \cite{gethner}, the convolution operators
of these theorems are in fact \emph{mixing}, a property stronger than
hypercyclicity. 
\end{remark}

\subsection{Existence and approximation results}

\begin{definition}
\textrm{\textrm{\label{definition_exp_space}\textrm{ Let $(\mathcal{P}%
_{\Theta}(^{m}E))_{m=0}^{\infty}$ be a $\pi_{1}$-holomorphy type from $E$ to
$\mathbb{C}$. An entire function $f\in\mathcal{H}(E^{\prime})$ is said to be
\textit{of $\Theta^{\prime}$-exponential type} if\newline(i) $\hat{d}%
^{m}f(0)\in\mathcal{P}_{\Theta^{\prime}}(^{m}E^{\prime})$ for every
$m\in\mathbb{N}_{0}$;\newline(ii) There are constants $C\geq0$ and $c>0$ such
that%
\[
\Vert{\hat{d}}^{m}f(0)\Vert_{\Theta^{\prime}}\leq Cc^{m},
\]
for all $m\in\mathbb{N}_{0}$. } } }

\textrm{\textrm{\textrm{The vector space of all such functions is denoted by
$Exp_{\Theta^{\prime}}(E^{\prime})$. } } }
\end{definition}

\begin{definition}
\textrm{\textrm{\cite[Definition 4.1]{favaro-jatoba1} \label{division space}%
\textrm{Let $U$ be an open subset of $E$ and $\mathcal{F}(U)$ a collection of
holomorphic functions from $U$ into $\mathbb{C}$. We say that $\mathcal{F}(U)$
is \emph{closed under division } if, for each $f$ and $g$ in $\mathcal{F}(U)$,
with $g\neq0$ and $h=f/g$ a holomorphic function on $U$, we have
$h\in\mathcal{F}(U)$.\newline The quotient notation $h=f/g$ means that
$f(x)=h(x)\cdot g(x)$, for all $x\in U$. } } }
\end{definition}

Now we are able to enunciate two results for convolution equations defined on
$\mathcal{H}_{\Theta b}(E)$ that we shall use, together with the techniques of
Section \ref{sec2}, to obtain new existence and approximation results for
convolution equations:

\begin{theorem}
\label{teoremaAproximacao1}\cite[Theorem 4.2]{favaro-jatoba1} If
$(\mathcal{P}_{\Theta}(^{m}E))_{m=0}^{\infty}$ is a $\pi_{1}$-$\pi_{2}
$-holomorphy type, $Exp_{\Theta^{\prime}}(E^{\prime})$ is closed under
division and $L\colon\mathcal{H}_{\Theta b}(E)\longrightarrow\mathcal{H}%
_{\Theta b}(E)$ is a convolution operator, then the vector subspace of
$\mathcal{H}_{\Theta b}(E)$ generated by the exponential polynomial solutions
of the homogeneous equation $L=0,$ is dense in the closed subspace of all
solutions of the homogeneous equation, that is, the vector subspace of
$\mathcal{H}_{\Theta b}(E)$ generated by
\[
\mathcal{L=}\left\{  P\exp\varphi;P\in\mathcal{P}_{\Theta}\left(
^{m}E\right)  ,m\in\mathbb{N}_{0},\varphi\in E^{\prime},L\left(  P\exp
\varphi\right)  =0\right\}
\]
is dense in
\[
\ker L=\left\{  f\in\mathcal{H}_{\Theta b}(E) ;L f=0\right\}  .
\]

\end{theorem}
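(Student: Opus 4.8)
The plan is to run the classical Malgrange--Ehrenpreis duality scheme, transported to the Fr\'{e}chet space $\mathcal{H}_{\Theta b}(E)$ via the Borel transform. First I would fix the isomorphism between the topological dual $\left[\mathcal{H}_{\Theta b}(E)\right]^{\prime}$ and the space $Exp_{\Theta^{\prime}}(E^{\prime})$ of $\Theta^{\prime}$-exponential type functions, sending each functional $T$ to an entire function $\widehat{T}$ of exponential type; this is the analytic backbone of the whole argument and is exactly step (i) in the outline preceding the statement. Under this isomorphism the transpose $L^{t}$ of the convolution operator $L$ becomes multiplication by a fixed symbol $\widehat{L}\in Exp_{\Theta^{\prime}}(E^{\prime})$, so that $\mathcal{B}_{\Theta}(L^{t}S)=\widehat{L}\cdot\mathcal{B}_{\Theta}(S)$ for every functional $S$; verifying this correspondence precisely (including that $\widehat{L}$ really is of $\Theta^{\prime}$-exponential type, which uses the $\pi_{1}$-$\pi_{2}$ hypotheses) is the first genuine task.

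Next I would reduce the density assertion to a Hahn--Banach statement. Let $\mathcal{L}$ be the set of exponential polynomial solutions of $Lf=0$ and let $V$ be the closed subspace it generates. Since $\mathcal{L}\subset\ker L$ and $\ker L$ is closed, we have $V\subset\ker L$, so by Hahn--Banach it suffices to show the reverse: every $T\in\left[\mathcal{H}_{\Theta b}(E)\right]^{\prime}$ that annihilates $\mathcal{L}$ already annihilates $\ker L$. The heart of the matter is then to translate the functional-analytic vanishing ``$T$ kills $\mathcal{L}$'' into a complex-analytic vanishing of $\widehat{T}$: a direct computation shows that $P\exp\varphi\in\ker L$ precisely when $\widehat{L}$ vanishes at $\varphi$ to an order governed by $\deg P$, and that $T(P\exp\varphi)$ is read off from the Taylor coefficients of $\widehat{T}$ at $\varphi$. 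Matching these forces $\widehat{T}$ to vanish on the zero set of $\widehat{L}$ with at least the same multiplicities, so the quotient $\widehat{T}/\widehat{L}$ is a well-defined entire function on $E^{\prime}$.

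Here the two standing hypotheses close the argument. Because $Exp_{\Theta^{\prime}}(E^{\prime})$ is closed under division, we get $\widehat{T}/\widehat{L}\in Exp_{\Theta^{\prime}}(E^{\prime})$, hence $\widehat{T}/\widehat{L}=\mathcal{B}_{\Theta}(S)$ for some $S\in\left[\mathcal{H}_{\Theta b}(E)\right]^{\prime}$; therefore $\widehat{T}=\widehat{L}\cdot\mathcal{B}_{\Theta}(S)=\mathcal{B}_{\Theta}(L^{t}S)$, so $T=L^{t}S$. Consequently, for any $f\in\ker L$ we obtain $T(f)=(L^{t}S)(f)=S(Lf)=S(0)=0$, which is exactly the Hahn--Banach condition we needed. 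The Dieudonn\'{e}--Schwartz theorem enters at this last stage (step (iii)): it guarantees that the range of $L^{t}$, equivalently the image of multiplication by $\widehat{L}$, is weak-$*$ closed, so that the surjectivity-type conclusion $T=L^{t}S$ is legitimate and the separation argument actually produces density rather than merely weak density.

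The main obstacle is the division step. Proving that $\widehat{T}/\widehat{L}$ remains in $Exp_{\Theta^{\prime}}(E^{\prime})$ is precisely the closure-under-division hypothesis, and in any concrete instance this is the genuinely hard theorem: one needs a growth estimate showing the quotient inherits exponential type with the correct $\Theta^{\prime}$-norm control, a point the paper itself flags as ``always a hardwork.'' Granting that hypothesis, the residual difficulty is bookkeeping: making the multiplicity correspondence between vanishing of $\widehat{T}$ on the variety of $\widehat{L}$ and annihilation of the exponential polynomials in $\mathcal{L}$ rigorous, and confirming the exactness of the symbol correspondence $L^{t}\leftrightarrow$ multiplication by $\widehat{L}$, after which the Hahn--Banach and Dieudonn\'{e}--Schwartz inputs finish the proof.
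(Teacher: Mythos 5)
This theorem is quoted verbatim from \cite[Theorem 4.2]{favaro-jatoba1}; the present paper gives no proof of it, and your sketch reproduces exactly the three-step Malgrange--Gupta scheme (Borel-transform identification of $\left[\mathcal{H}_{\Theta b}(E)\right]^{\prime}$ with $Exp_{\Theta^{\prime}}(E^{\prime})$, translation of ``$T$ annihilates $\mathcal{L}$'' into divisibility of $\widehat{T}$ by the symbol $\widehat{L}$, then the closure-under-division hypothesis plus Hahn--Banach) that the paper itself outlines as steps (i)--(iii) and that the cited reference carries out. The one small inaccuracy is the role you assign to the Dieudonn\'{e}--Schwartz theorem: for this approximation statement the functional $S$ with $T=L^{t}S$ is produced directly by the division hypothesis through the Borel isomorphism, so Hahn--Banach alone closes the argument (one should also dispose separately of the trivial case $L=0$, where density of $\mathcal{L}$ follows from the $\pi_{1}$ property); Dieudonn\'{e}--Schwartz is what is actually needed for the companion existence theorem asserting surjectivity of $L$.
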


\begin{theorem}
\label{Teorema de existencia}\cite[Theorem 4.4]{favaro-jatoba1} If
$(\mathcal{P}_{\Theta}(^{m}E))_{m=0}^{\infty}$ is a $\pi_{1}$-$\pi_{2}%
$-holomorphy type, $Exp_{\Theta^{\prime}}(E^{\prime})$ is closed under
division and $L\colon\mathcal{H}_{\Theta b}(E)\longrightarrow\mathcal{H}%
_{\Theta b}(E)$ is a non zero convolution operator, then $L$ is onto, that is,
$L\left(  \mathcal{H}_{\Theta b}\left(  E\right)  \right) =\mathcal{H}_{\Theta
b}\left(  E\right)  $.
\end{theorem}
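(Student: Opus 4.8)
The plan is to prove surjectivity by a duality argument, transporting the problem through the Borel transform into a division problem for entire functions of $\Theta^{\prime}$-exponential type, precisely along the three steps (i)--(iii) described above. First I would record the duality coming from step (i): since $(\mathcal{P}_{\Theta}(^{m}E))_{m=0}^{\infty}$ is a $\pi_{1}$-holomorphy type, the Borel transform furnishes a topological isomorphism
\[
\mathcal{B}\colon \left[\mathcal{H}_{\Theta b}(E)\right]^{\prime}\longrightarrow Exp_{\Theta^{\prime}}(E^{\prime}),\qquad \mathcal{B}(S)(\varphi)=S(e^{\varphi}),
\]
where $e^{\varphi}(x)=e^{\varphi(x)}$. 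Note that $e^{\varphi}\in\mathcal{H}_{\Theta b}(E)$ by condition (i) of Definition \ref{pi-tipo de holomorfia}, and therefore point evaluations of Borel transforms, $\mathcal{B}(S)\mapsto\mathcal{B}(S)(\varphi)=S(e^{\varphi})$, are $\sigma\!\left(\left[\mathcal{H}_{\Theta b}(E)\right]^{\prime},\mathcal{H}_{\Theta b}(E)\right)$-continuous. This observation is what lets weak-$*$ limits be read off pointwise.

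Next I would analyze the transpose. Writing $L=\bar{\Gamma}_{\Theta}(T)$ with $T=\delta_{0}\circ L\in\left[\mathcal{H}_{\Theta b}(E)\right]^{\prime}$ and setting $g:=\mathcal{B}(T)\in Exp_{\Theta^{\prime}}(E^{\prime})$, a direct computation with the definition of the convolution product shows that under $\mathcal{B}$ the transpose $L^{t}$ becomes multiplication by the symbol $g$:
\[
\mathcal{B}\left(L^{t}S\right)=g\cdot\mathcal{B}(S),\qquad S\in\left[\mathcal{H}_{\Theta b}(E)\right]^{\prime}.
\]
Since $\mathcal{H}_{\Theta b}(E)$ is a Fr\'echet space, $L$ is onto if and only if $L^{t}$ is injective and $\mathrm{Im}\,L^{t}$ is weak-$*$ closed (closed range theorem). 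Injectivity is immediate: $L\neq 0$ forces $g\neq 0$, and multiplication by a nonzero entire function on the connected space $E^{\prime}$ is injective because $\mathcal{H}(E^{\prime})$ is an integral domain. Equivalently, density of $\mathrm{Im}\,L$ follows from a Hahn--Banach argument (step (iii)): any $S$ annihilating $L(\mathcal{H}_{\Theta b}(E))$ satisfies $L^{t}S=0$, hence $g\cdot\mathcal{B}(S)=0$, hence $\mathcal{B}(S)=0$ and $S=0$.

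It remains to establish that $\mathrm{Im}\,L^{t}=g\cdot Exp_{\Theta^{\prime}}(E^{\prime})$ is weak-$*$ closed, and here the hypothesis that $Exp_{\Theta^{\prime}}(E^{\prime})$ is closed under division enters decisively. If a net $g\psi_{i}$ converges weak-$*$ to some $h\in Exp_{\Theta^{\prime}}(E^{\prime})$, then by the continuity of point evaluations noted above $g(\varphi)\psi_{i}(\varphi)\to h(\varphi)$ for every $\varphi\in E^{\prime}$; an equicontinuity/normal-family argument fed by the exponential-type bounds then upgrades this to locally uniform convergence along a subnet, so that $h$ inherits the vanishing of the $g\psi_{i}$ on the zero set of $g$. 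Consequently $h/g$, which is already holomorphic on $\{g\neq 0\}$, extends across $\{g=0\}$ to an entire function on $E^{\prime}$. The closed-under-division hypothesis now yields $h/g\in Exp_{\Theta^{\prime}}(E^{\prime})$, whence $h=g\cdot(h/g)\in\mathrm{Im}\,L^{t}$; combining this closedness with injectivity and the Dieudonn\'e--Schwartz machinery of step (iii) gives surjectivity of $L$.

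The main obstacle is exactly this weak-$*$ closedness of the range, and within it the \emph{removable-singularity} step: passing from mere pointwise divisibility on $\{g\neq 0\}$ to genuine holomorphic divisibility on all of $E^{\prime}$, together with the uniform exponential-type control needed to justify the limiting argument. This is where the division theorem for $Exp_{\Theta^{\prime}}(E^{\prime})$ and the Dieudonn\'e--Schwartz theorem must be combined, and it is the technically delicate heart of the proof; the identification of the dual, the computation of the transpose, and the injectivity/density reduction are routine duality bookkeeping by comparison.
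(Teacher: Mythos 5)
You should first note that the paper itself does not prove this theorem: it is quoted verbatim from \cite[Theorem 4.4]{favaro-jatoba1} as a prerequisite, and the paper only records the three-step strategy (i)--(iii) that such proofs follow. Measured against that outline and the classical Malgrange--Gupta argument it abbreviates, your skeleton is right: the Borel-transform identification of $\left[\mathcal{H}_{\Theta b}(E)\right]^{\prime}$ with $Exp_{\Theta^{\prime}}(E^{\prime})$, the computation $\mathcal{B}(L^{t}S)=g\cdot\mathcal{B}(S)$ with $g=\mathcal{B}(\delta_{0}\circ L)$, the injectivity of $L^{t}$ from $g\neq0$, and the reduction of surjectivity to the weak-$\ast$ closedness of $\mathrm{Im}\,L^{t}$ via the Dieudonn\'e--Schwartz theorem all match steps (i) and (iii).

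The genuine gap is in the weak-$\ast$ closedness step. You argue that if $g\psi_{i}\rightarrow h$ weak-$\ast$, then $h$ vanishes on $\{g=0\}$ and hence $h/g$ extends to an entire function. That inference is false: vanishing of $h$ on the zero set of $g$ does not make $h/g$ holomorphic --- already for $g(z)=z^{2}$ and $h(z)=z$ on $\mathbb{C}$ the quotient has a pole. Divisibility requires $h$ to vanish to at least the order of $g$ along its zero set, and nothing in pointwise convergence of a net delivers that multiplicity information. (In addition, a weak-$\ast$ convergent net need not be equicontinuous, so the uniform exponential-type bounds you invoke for the normal-family upgrade are not available; one must restrict to equicontinuous sets, i.e.\ polars of neighborhoods of zero, before any such compactness argument can be run.) The standard proof avoids the net altogether: by the bipolar theorem the weak-$\ast$ closure of $\mathrm{Im}\,L^{t}$ is the annihilator of $\ker L$, so it suffices to show that any $S$ with $S|_{\ker L}=0$ has $\mathcal{B}(S)$ divisible by $g$ inside $Exp_{\Theta^{\prime}}(E^{\prime})$. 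Here one uses that $\ker L$ contains all exponential polynomials $P\exp\varphi$ with $L(P\exp\varphi)=0$; the conditions $S(P\exp\varphi)=0$ translate into the statement that the derivatives of $\mathcal{B}(S)$ at $\varphi$ vanish whenever the corresponding derivatives of $g$ do, which is exactly the multiplicity bookkeeping needed to conclude, by restriction to complex lines and the one-variable division theorem, that $\mathcal{B}(S)/g$ is entire. Only at that point does the hypothesis that $Exp_{\Theta^{\prime}}(E^{\prime})$ is closed under division enter, upgrading ``entire'' to ``of $\Theta^{\prime}$-exponential type''. Without this annihilator-of-the-kernel reduction, the decisive step of your argument does not go through.
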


\section{Applications\label{sec4}}

\subsection{Lorentz nuclear and summing polynomials: the basics}

For the sake of completeness we will recall the concepts of Lorentz summing
polynomials introduced in \cite{Matos-Pellegrino} and Lorentz nuclear
polynomials introduced in \cite{favaromatospellegrino} and related results. We
start introducing some notations.

We denote by $c_{0}(E)$ the Banach space (with the $\sup$ norm $\left\Vert
.\right\Vert _{\infty}$) composed by the sequences $(x_{j})_{j=1}^{\infty}$ in
the Banach space $E$ so that $\lim_{n\rightarrow\infty}x_{n}=0,$ and
$c_{00}(E)$ is the subspace of $c_{0}(E)$ formed by the sequences
$(x_{j})_{j=1}^{\infty}$ for which there is a $N_{0}$ such that $x_{n}=0$ for
all $n\geq N_{0}.$ When $E=\mathbb{K}:=\mathbb{R}$ or $\mathbb{C}$ we write
$c_{0}$ and $c_{00}$ instead of $c_{0}(\mathbb{K})$ and $c_{00}(\mathbb{K}),$
respectively. If $u=(u_{j})\in c_{00}(E)$, the symbol $card(u)$ denotes the
cardinality of the set $\{j;u_{j}\neq0\}.$

As usual $\ell_{\infty}(E)$ represents the Banach space of bounded sequences
in the Banach space $E$, with the $\sup$ norm and $\ell_{\infty}:=\ell
_{\infty}(\mathbb{K}).$ If $m\in\mathbb{N}$, $(x_{j})_{j=1}^{m}$ denotes
$(x_{1},\ldots,x_{m},0,0,\ldots),$ and when $(x_{j})_{j=1}^{\infty}$ is a
sequence of positive real numbers, we say that $(x_{j})_{j=1}^{\infty}$ admits
a non-increasing rearrangement if there is an injection $\pi:\mathbb{N}%
\rightarrow\mathbb{N}$ such that $x_{\pi(1)}\geq x_{\pi(2)}\geq\cdots$ and
$x_{j}=x_{\pi(i)}$ for some $i$ whenever $x_{j}\neq0.$ If $p\geq1,$ then
$p^{\prime}$ denotes the conjugate of $p$, i.e., $\frac{1}{p}+\frac
{1}{p^{\prime}}=1.$


\begin{definition}
\textrm{\textrm{\cite{Matos-Pellegrino} Let $E$ be a Banach space and
$0<r,q<+\infty$. } }

\textrm{\textrm{(a) For $x=(x_{j})_{j=1}^{\infty}\in\ell_{\infty}(E)$ we
define
\[
a_{E,n}(x):=\inf\left\{  \left\Vert x-u\right\Vert _{\infty};u\in
c_{00}(E)\text{ and }card(u)<n\right\}  .
\]
} }

\textrm{\textrm{(b) The Lorentz sequence space $\ell_{(r,q)}(E)$ consists of
all sequences $x=(x_{j})_{j=1}^{\infty}\in\ell_{\infty}(E)$ such that%
\[
\left(  n^{\frac{1}{r}-\frac{1}{q}}a_{E,n}(x)\right)  _{n=1}^{\infty}\in
\ell_{q}.
\]
For $x\in\ell_{(r,q)}(E)$ we define the quasi-norm
\[
\left\Vert x\right\Vert _{(r,q)}=\left\Vert \left(  n^{\frac{1}{r}-\frac{1}%
{q}}a_{E,n}(x)\right)  _{n=1}^{\infty}\right\Vert _{q}
\]
} }

\textrm{\textrm{(c) $\ell_{(r,q)}^{w}(E)$ is the space of all sequences
$(x_{j})_{j=1}^{\infty}$ in $E$ such that $\left\Vert (\varphi(x_{n}%
))_{n=1}^{\infty}\right\Vert _{(r,q)}<\infty$ for every $\varphi\in E^{\prime
}.$ For $x\in\ell_{(r,q)}^{w}(E)$ we define the quasi-norm
\[
\left\Vert (x_{n})_{n=1}^{\infty}\right\Vert _{w,(r,q)}:=\sup_{\left\Vert
\varphi\right\Vert \leq1}\left\Vert (\varphi(x_{n}))_{n=1}^{\infty}\right\Vert
_{_{(r,q)}}.
\]
} }

\textrm{\textrm{If endowed with the respective quasi-norms, $\ell_{(r,q)}(E)$ and
$\ell_{(r,q)}^{w}(E)$ become complete spaces. } }
\end{definition}

It is well known that $\ell_{(r,q)}(E)\subset c_{0}(E)$ and if $x=(x_{j}%
)_{j=1}^{\infty}\in c_{0}(E),$ then the sequence $(\left\Vert x_{j}\right\Vert
)_{j=1}^{\infty}$ admits a non-increasing rearrangement.

\begin{definition}
\textrm{\textrm{\cite[Definition 4.1]{Matos-Pellegrino} \label{sb} If
$0<p,q,r,s<\infty,$ an $n$-homogeneous polynomial $P\in\mathcal{P}(^{n}E;F)$
is Lorentz\textbf{ }$((s,p);(r,q))$-summing if $(P(x_{j}))_{j=1}^{\infty}%
\in\ell_{(s,p)}(F)$ for each $(x_{j})_{j=1}^{\infty}\in\ell_{(r,q)}^{w}(E).$ }
}
\end{definition}

The vector space composed by the Lorentz $((s,p);(r,q))$-summing
$n$-homogeneous polynomials from $E$ to $F$ is denoted by $\mathcal{P}%
_{as((s,p);(r,q))}(^{n}E;F).$ When $n=1$ we write $\mathcal{L}%
_{as((s,p);(r,q))}(E;F).$

When $s=p$, we write $\mathcal{L}_{as(s;(r,q))}$ instead of $\mathcal{L}%
_{as((s,s);(r,q))}$; when $r=q,$ we denote $\mathcal{L}_{as((s,p);q)}$ instead
of $\mathcal{L}_{as((s,p);(q,q))}.$

Note that when $n=1$, $s=p$ and $r=q$ we have the usual concept of absolutely
$(p;q)$-summing operator. The space of absolutely $(p;q)$-summing operators
from $E$ to $F$ is represented by $\mathcal{L}_{as(p;q)}(E;F)$. When $p=q,$ we
simply write $\mathcal{L}_{as,p}$ instead of $\mathcal{L}_{as(p;q)}$. For the
theory of absolutely summing linear operators we refer to \cite{Diestel}.

\begin{theorem}
\cite[Theorem 4.2]{Matos-Pellegrino} \label{novoJ}For $P\in\mathcal{P}%
(^{n}E;F)$, the following conditions are equivalent:

(1) $P$ is Lorentz $((s,p);(r,q))$-summing.

(2) There is $C\geq0$ such that
\[
\left\Vert (P(x_{j}))_{j=1}^{m}\right\Vert _{(s,p)}\leq C\left\Vert
(x_{j})_{j=1}^{m}\right\Vert _{w,(r,q)}^{n}%
\]
for all $m\in\mathbb{N}$ and $x_{1},\dots,x_{m}\in E$.

(3) There is $C\geq0$ such that
\[
\left\Vert (P(x_{j}))_{j=1}^{\infty}\right\Vert _{(s,p)}\leq C\left\Vert
(x_{j})_{j=1}^{\infty}\right\Vert _{w,(r,q)}^{n}%
\]
for all $(x_{j})_{j=1}^{\infty}\in\ell_{(r,q)}^{w}(E)$.

The infimum of the constants $C$ for which the above inequalities hold is a
quasi-norm (denoted by $\left\Vert .\right\Vert _{as((s,p);(r,q))}$) for
$\mathcal{P}_{as((s,p);(r,q))}(^{n}E;F)$ and, under this quasi-norm,
$\mathcal{P}_{as((s,p);(r,q))}(^{n}E;F)$ is complete.
\end{theorem}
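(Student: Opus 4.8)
The plan is to prove the cycle $(2)\Rightarrow(3)\Rightarrow(1)\Rightarrow(2)$, which yields the three-way equivalence, and then, separately, that $\inf C$ defines a quasi-norm under which $\mathcal{P}_{as((s,p);(r,q))}(^{n}E;F)$ is complete. For $(2)\Rightarrow(3)$ I would pass to the limit over finite sections: applying the inequality in $(2)$ to each truncation $(x_{j})_{j=1}^{m}$ of a sequence $(x_{j})_{j=1}^{\infty}\in\ell_{(r,q)}^{w}(E)$ and letting $m\to\infty$, the right-hand side stays bounded by $C\Vert(x_{j})_{j=1}^{\infty}\Vert_{w,(r,q)}^{n}$ while the left-hand side increases to $\Vert(P(x_{j}))_{j=1}^{\infty}\Vert_{(s,p)}$, since $a_{F,n}\big((P(x_{j}))_{j=1}^{m}\big)\nearrow a_{F,n}\big((P(x_{j}))_{j=1}^{\infty}\big)$ and one may invoke monotone convergence in $\ell_{q}$. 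The implication $(3)\Rightarrow(1)$ is immediate, because $(3)$ asserts in particular that $(P(x_{j}))_{j=1}^{\infty}\in\ell_{(s,p)}(F)$ whenever $(x_{j})_{j=1}^{\infty}\in\ell_{(r,q)}^{w}(E)$. Essentially all the work, and the main obstacle, is the remaining implication $(1)\Rightarrow(2)$.

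I would prove $(1)\Rightarrow(2)$ by contraposition, via a gliding-hump (block) construction that exploits the $n$-homogeneity of $P$; note that a soft closed-graph or Baire argument does not apply directly, since the induced map $(x_{j})\mapsto(P(x_{j}))$ is $n$-homogeneous rather than linear. Assuming $(2)$ fails, I would select, for each $k$, a finitely supported normalized block $u^{(k)}=(x^{(k)}_{j})_{j}$ with $\Vert u^{(k)}\Vert_{w,(r,q)}=1$ and $\Vert(P(x^{(k)}_{j}))_{j}\Vert_{(s,p)}>M_{k}$, where the thresholds $M_{k}$ may be taken as large as desired because $(2)$ is violated for every constant. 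Choosing scalars $\lambda_{k}=k^{-\alpha}$ with $\alpha>1/r$ and arranging $M_{k}\ge k^{\alpha n+1}$, I would assemble the rescaled blocks $\lambda_{k}u^{(k)}$ on disjoint coordinate ranges into a single sequence $v$. Rescaling by $\lambda_{k}$ multiplies the $k$-th image block norm by $\lambda_{k}^{n}$ (this is exactly where homogeneity is used), so that block has $\Vert\cdot\Vert_{(s,p)}>\lambda_{k}^{n}M_{k}\ge k$.

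The two facts about Lorentz sequence spaces on which this construction rests, and which I regard as the delicate point, are: (i) a lower estimate, namely that restricting a Lorentz sequence to a subset of coordinates does not increase its quasi-norm, whence $\Vert(P(v_{j}))_{j}\Vert_{(s,p)}\ge\sup_{k}\lambda_{k}^{n}M_{k}=\infty$ and so $(P(v_{j}))_{j}\notin\ell_{(s,p)}(F)$; and (ii) a disjoint-block upper estimate $\big\Vert\bigsqcup_{k}B_{k}\big\Vert_{(r,q)}\le C\big(\sum_{k}\Vert B_{k}\Vert_{(r,q)}^{r}\big)^{1/r}$, with $C$ depending only on $(r,q)$ and hence uniform over $\varphi\in B_{E^{\prime}}$, which yields $\Vert v\Vert_{w,(r,q)}\le C\big(\sum_{k}\lambda_{k}^{r}\big)^{1/r}<\infty$ and thus $v\in\ell_{(r,q)}^{w}(E)$. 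Together these contradict $(1)$. The efficient exponent $r$ in (ii) is precisely what makes the choice $\alpha>1/r$ compatible with the blow-up requirement $\alpha n<1$ on the image side, and establishing this uniform block estimate for the weak Lorentz quasi-norm is the crux of the argument.

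For the quasi-norm and completeness assertions: positive homogeneity and the quasi-triangle inequality for $\Vert\cdot\Vert_{as((s,p);(r,q))}$ are inherited from the corresponding properties of $\Vert\cdot\Vert_{(s,p)}$ (with the same quasi-triangle constant), while definiteness follows by testing on one-term sequences, where $(2)$ reduces to $\Vert P(x)\Vert\le\Vert P\Vert_{as((s,p);(r,q))}\Vert x\Vert^{n}$ and forces $P=0$ once the quasi-norm vanishes. This same one-term inequality gives $\Vert P\Vert\le\Vert P\Vert_{as((s,p);(r,q))}$, so every $\Vert\cdot\Vert_{as((s,p);(r,q))}$-Cauchy sequence $(P_{\ell})$ is Cauchy in $\mathcal{P}(^{n}E;F)$ and converges there to some $P$. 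A final passage to the limit in the finite inequalities $(2)$ for the differences $P_{\ell}-P_{m}$, using the continuity of the Lorentz quasi-norm on finite sections together with the pointwise convergence $P_{m}\to P$, shows that $P_{\ell}\to P$ in $\Vert\cdot\Vert_{as((s,p);(r,q))}$ and that $P$ is itself Lorentz $((s,p);(r,q))$-summing, which completes the proof.
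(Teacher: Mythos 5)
This statement is imported verbatim from Matos and Pellegrino (Theorem 4.2 of \cite{Matos-Pellegrino}); the present paper states it as a background result and gives no proof, so there is no in-paper argument to compare yours against. Judged on its own, your sketch follows the standard route for results of this kind (the one used for absolutely summing linear operators and in Matos' polynomial versions): $(2)\Rightarrow(3)$ by monotone passage to the limit over finite sections, $(3)\Rightarrow(1)$ trivially, $(1)\Rightarrow(2)$ by a gliding-hump contradiction that exploits the $n$-homogeneity of $P$ (correctly noting that a closed-graph shortcut is not directly available), and completeness via the inequality $\Vert P\Vert\le\Vert P\Vert_{as((s,p);(r,q))}$ read off from one-term sequences. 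The skeleton is sound and you have located the genuinely delicate points.

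Two items need repair or completion before this is a proof. First, the disjoint-block upper estimate you label (ii) is asserted with exponent $r$ for arbitrary $0<r,q<\infty$; what comes out cleanly (via distribution functions, or the non-increasing rearrangement of a disjoint union) is an upper $\min(r,q)$-estimate, so when $q<r$ you should take $\alpha>1/\min(r,q)$ and require $\sum_k\lambda_k^{\min(r,q)}<\infty$ --- a harmless change, since the $M_k$ are unconstrained, but the estimate as you stated it is not the one you can prove in general. Second, the sentence about $\alpha>1/r$ being ``compatible with the blow-up requirement $\alpha n<1$'' is a non sequitur: no such requirement exists, precisely because the failure of $(2)$ supplies violating finite families for \emph{every} constant, so $M_k\ge k^{\alpha n+1}$ can always be arranged; had $\alpha n<1$ genuinely been needed, the construction would fail for all $n\ge r$. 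Beyond these, the proposal remains a program rather than a proof: the monotonicity of the $a_{F,n}$ under restriction, the convergence of the truncated quasi-norms to the full one in $(2)\Rightarrow(3)$ (which uses that a finite supremum forces $(P(x_j))_j\in c_0(F)$), and the uniformity over $\varphi\in B_{E'}$ of the block estimate all still have to be written out, as does the limiting argument in the completeness step.
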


\begin{definition}
\textrm{\textrm{\cite[Definition 4.2]{favaromatospellegrino} \label{1.2}Let
$E$ and $F$ be Banach spaces, $n\in\mathbb{N}$ and $r,q,s,p\in\lbrack
1,\infty\lbrack$ such that $r\leq q$, $s^{\prime}\leq p^{\prime}$ and%
\[
1\leq\frac{1}{q}+\frac{n}{p^{\prime}}.
\]
An $n$-homogeneous polynomial $P:E\rightarrow F$ is Lorentz $((r,q);(s,p))$%
-nuclear if
\begin{equation}
P(x)=%
{\textstyle\sum\limits_{j=1}^{\infty}}
\lambda_{j}(\varphi_{j}(x))^{n}y_{j},\label{aaas}%
\end{equation}
with $(\lambda_{j})_{j=1}^{\infty}\in\ell_{(r,q)}$, $(\varphi_{j}%
)_{j=1}^{\infty}\in\ell_{(s^{\prime},p^{\prime})}^{w}(E^{\prime})$ and
$(y_{j})_{j=1}^{\infty}\in\ell_{\infty}(F).$ } }
\end{definition}

We denote by $\mathcal{P}_{N,((r,q);(s,p))}(^{n}E;F)$ the subset of
$\mathcal{P}(^{n}E;F)$ composed by the $n$-homogeneous polynomials which are
Lorentz $((r,q);(s,p))$-nuclear. We define%
\begin{equation}
\left\Vert P\right\Vert _{N,((r,q);(s,p))}=\inf\left\Vert (\lambda_{j}%
)_{j=1}^{\infty}\right\Vert _{(r,q)}\left\Vert (\varphi_{j})_{j=1}^{\infty
}\right\Vert _{w,(s^{\prime},p^{\prime})}^{n}\left\Vert (y_{j})_{j=1}^{\infty
}\right\Vert _{\infty},\nonumber
\end{equation}
where the infimum is considered for all representations of $P\in
\mathcal{P}_{N,((r,q);(s,p))}(^{n}E;F)$ of the form (\ref{aaas}).

Note that%
\begin{equation}
\left\Vert P\right\Vert \leq\left\Vert P\right\Vert _{N,((r,q);(s,p))}%
.\nonumber
\end{equation}

From now on, unless stated otherwise, $r,q\in]1,\infty\lbrack$ and
$s,p\in\lbrack1,\infty\lbrack,$ with $r\leq q$ and $s^{\prime}\leq p^{\prime
}.$

\begin{proposition}
\cite[Propositions 4.3 and 4.4]{favaromatospellegrino} \label{t1.4} The space
$\left(  \mathcal{P}_{N,((r,q);(s,p))}(^{n}E;F),\left\Vert \cdot\right\Vert
_{N,((r,q);(s,p))}\right)  $ is a complete quasi-normed space. Besides, for
$t_{n}$ given by
\[
\frac{1}{t_{n}}=\frac{1}{q}+\frac{n}{p^{\prime}},
\]
there is a $M\geq0$ so that%
\begin{equation}
\left\Vert P+Q\right\Vert _{N,((r,q);(s,p))}^{t_{n}}\leq M\left(  \left\Vert
P\right\Vert _{N,((r,q);(s,p))}^{t_{n}}+\left\Vert Q\right\Vert
_{N,((r,q);(s,p))}^{t_{n}}\right)  .\nonumber
\end{equation}
For this reason we call this quasi-norm a \textquotedblleft quasi- $t_{n}%
$-norm\textquotedblright.
\end{proposition}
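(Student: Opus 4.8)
The plan is to treat the two assertions through a single engine, the quasi-$t_n$-norm inequality, and to derive both the quasi-norm axioms and completeness from it. Writing $\Vert\cdot\Vert_N$ for $\Vert\cdot\Vert_{N,((r,q);(s,p))}$, homogeneity is immediate (scaling $(\lambda_j)$ scales $\Vert(\lambda_j)\Vert_{(r,q)}$ linearly), and definiteness is already in hand: since $\Vert P\Vert\le\Vert P\Vert_N$, a vanishing nuclear quasi-norm forces $\Vert P\Vert=0$, hence $P=0$. The quasi-triangle inequality is then a formal consequence of the displayed estimate, since $\Vert P+Q\Vert_N^{t_n}\le M(\Vert P\Vert_N^{t_n}+\Vert Q\Vert_N^{t_n})$ yields $\Vert P+Q\Vert_N\le (2M)^{1/t_n}\max(\Vert P\Vert_N,\Vert Q\Vert_N)$; note that the standing hypothesis $1\le\frac1q+\frac n{p'}$ means exactly $t_n\le1$, so a genuine $t_n$-norm is what we should expect. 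Thus the real content is (i) the $t_n$-estimate and (ii) completeness.

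For (i) I would fix $\varepsilon>0$ and choose representations $P(x)=\sum_j\lambda_j(\varphi_j(x))^ny_j$ and $Q(x)=\sum_k\mu_k(\psi_k(x))^nz_k$ that are $\varepsilon$-optimal. Absorbing constants into $(\lambda_j)$ and $(\mu_k)$, I normalize $\Vert(\varphi_j)\Vert_{w,(s',p')}=\Vert(\psi_k)\Vert_{w,(s',p')}=1$ and $\Vert(y_j)\Vert_\infty=\Vert(z_k)\Vert_\infty=1$, so that $a:=\Vert(\lambda_j)\Vert_{(r,q)}\le\Vert P\Vert_N+\varepsilon$ and $b:=\Vert(\mu_k)\Vert_{(r,q)}\le\Vert Q\Vert_N+\varepsilon$. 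The crucial move is to build a representation of $P+Q$ with two free scaling parameters $\alpha,\beta>0$: concatenating the blocks $(\lambda_j/\alpha^n)\oplus(\mu_k/\beta^n)$, $(\alpha\varphi_j)\oplus(\beta\psi_k)$, $(y_j)\oplus(z_k)$ reproduces $P+Q$ and gives, via the concatenation estimates of the next paragraph,
\[
\Vert P+Q\Vert_N\le M_0\,\bigl(a^q\alpha^{-nq}+b^q\beta^{-nq}\bigr)^{1/q}\bigl(\alpha^{p'}+\beta^{p'}\bigr)^{n/p'}.
\]
I would then minimize the right-hand side over $\alpha,\beta$: the expression is scale-invariant, so normalizing $\alpha^{p'}+\beta^{p'}=1$ and using $\frac{d}{dX}\bigl(a^qX^{-\theta}+b^q(1-X)^{-\theta}\bigr)=0$ with $\theta=nq/p'$ gives the optimum $(a^{q/(\theta+1)}+b^{q/(\theta+1)})^{\theta+1}$ for the first factor. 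Since $\theta+1=q(\tfrac{n}{p'}+\tfrac1q)=q/t_n$, this collapses exactly to $(a^{t_n}+b^{t_n})^{1/t_n}$, and raising to the $t_n$ power and letting $\varepsilon\to0$ yields the claimed inequality. The appearance of precisely $t_n=(\tfrac1q+\tfrac n{p'})^{-1}$ is thus forced by the Young-type balance between the coefficient block (exponent $q$) and the $n$-fold symbol block (exponent $p'$).

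The technical heart — and the step I expect to be the main obstacle — is the \emph{concatenation (disjoint-union) estimates} used above:
\[
\Vert u\oplus v\Vert_{(r,q)}^{q}\le 2^{q/r-1}\bigl(\Vert u\Vert_{(r,q)}^{q}+\Vert v\Vert_{(r,q)}^{q}\bigr),\qquad
\Vert u\oplus v\Vert_{w,(s',p')}^{p'}\le 2^{p'/s'-1}\bigl(\Vert u\Vert_{w,(s',p')}^{p'}+\Vert v\Vert_{w,(s',p')}^{p'}\bigr).
\]
I would prove the first via the distribution-function formula $\Vert u\Vert_{(r,q)}^q\asymp\int_0^\infty\lambda^{q-1}d_u(\lambda)^{q/r}\,d\lambda$ together with $d_{u\oplus v}=d_u+d_v$ and the convexity bound $(x+y)^{q/r}\le2^{q/r-1}(x^{q/r}+y^{q/r})$, which is valid precisely because $q/r\ge1$, i.e. because of the hypothesis $r\le q$; the second estimate follows from the first applied to $(\theta(\varphi_j))$ and a supremum over $\Vert\theta\Vert\le1$, and here the exponent $p'$ together with the convexity step needs exactly $s'\le p'$. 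So the two standing order conditions of Definition \ref{1.2} are what make the whole scheme run.

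Finally, for completeness I would use that $\Vert\cdot\Vert_N^{t_n}$ is subadditive, so $d(P,Q)=\Vert P-Q\Vert_N^{t_n}$ is a metric and it suffices to show that every series with $\sum_k\Vert P_k\Vert_N^{t_n}<\infty$ converges in $\mathcal{P}_{N,((r,q);(s,p))}(^nE;F)$. One normalizes each $P_k$ as above, concatenates all blocks, and chooses per-block scalings $\alpha_k$ by the same optimization (now over sequences, with $\alpha_k^{p'}\propto\Vert P_k\Vert_N^{t_n}$) so that the limiting object is nuclear with $\Vert\sum_kP_k\Vert_N\lesssim(\sum_k\Vert P_k\Vert_N^{t_n})^{1/t_n}$, the continuous inclusion into $\mathcal{P}(^nE;F)$ identifying the limit. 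The delicate point here is that the two-block concatenation constant $2^{q/r-1}$ does not iterate to infinitely many blocks, so I would either establish a \emph{countable} disjointness estimate directly from the distribution-function formula, or — cleaner — realize $\mathcal{P}_{N,((r,q);(s,p))}(^nE;F)$ with its nuclear quasi-norm as the quotient of a complete quasi-normed space of admissible representations, whence completeness is automatic as a quotient of a complete space by a closed subspace. This last reformulation is where I would expect the bookkeeping to be heaviest.
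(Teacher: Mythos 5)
This proposition is imported verbatim from \cite[Propositions 4.3 and 4.4]{favaromatospellegrino}; the present paper contains no proof of it, so there is nothing internal to compare against. Judged on its own terms, your argument for the quasi-$t_{n}$-norm inequality is correct and is the natural one: the two-parameter concatenation $(\lambda_j/\alpha^{n})\oplus(\mu_k/\beta^{n})$, $(\alpha\varphi_j)\oplus(\beta\psi_k)$, $(y_j)\oplus(z_k)$ does represent $P+Q$, the two-block interlacing bound $\Vert u\oplus v\Vert_{(r,q)}^{q}\le C(\Vert u\Vert_{(r,q)}^{q}+\Vert v\Vert_{(r,q)}^{q})$ holds for $r\le q$ (via $a_{2n-1}(u\oplus v)\le\max(a_n(u),a_n(v))$; the constant comes out as $2^{q/r}$ rather than $2^{q/r-1}$, which is immaterial), and your Lagrange computation correctly produces $\theta+1=q/t_{n}$, so the exponent $t_{n}$ is indeed forced. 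The homogeneity, definiteness and quasi-triangle reductions are fine.

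The completeness part has two genuine soft spots. First, since the concatenation constants exceed $1$, the inequality only gives $\Vert P+Q\Vert_N^{t_n}\le M(\Vert P\Vert_N^{t_n}+\Vert Q\Vert_N^{t_n})$ with $M>1$, so $\Vert\cdot\Vert_N^{t_n}$ is \emph{not} subadditive and $d(P,Q)=\Vert P-Q\Vert_N^{t_n}$ is not a metric; you need Aoki--Rolewicz (an equivalent $s$-norm for some $s\le t_n$) or the standard criterion via rapidly Cauchy subsequences. Second, and more seriously, the ``countable disjointness estimate'' you propose as your first fix is false for $r<q$: taking $N$ singleton blocks of height $1$ gives $\Vert\bigoplus_k u_k\Vert_{(r,q)}^{q}\asymp N^{q/r}$ while $\sum_k\Vert u_k\Vert_{(r,q)}^{q}=N$, so no uniform constant exists for growing numbers of blocks. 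The completeness argument must instead iterate the two-block bound along a telescoping series whose terms decay geometrically (so that the accumulating factors $M^{k}$ are absorbed), or use your second proposed device of a quotient of a complete space of representations. With either of those substitutions the proof closes; as written, the completeness half is a sketch with one proposed route that would fail.
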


\begin{theorem}
\cite[Theorem 5.4]{favaromatospellegrino} \label{555}If $E^{\prime}$ has the
bounded approximation property, then the linear mapping
\[
\Psi:\mathcal{P}_{N,((r,q);(s,p))}(^{n}E;F)^{\prime}\rightarrow\mathcal{P}%
_{as((r^{\prime},q^{\prime});(s^{\prime},p^{\prime}))}(^{n}E^{\prime
};F^{\prime})
\]
given by $\Psi(T)=P_{T}$ is a topological isomorphism, where the map
$P_{T}:E^{\prime}\rightarrow F^{\prime}$ is given by
\[
P_{T}(\varphi)(y)=T(\varphi^{n}y).
\]

\end{theorem}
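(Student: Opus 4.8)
The plan is to show that $\Psi$ is a bijection admitting two-sided norm estimates, in the order: (i) $\Psi$ is well defined and norm-decreasing; (ii) $\Psi$ is injective; (iii) $\Psi$ is onto, with norm-increasing inverse. The bounded approximation property of $E'$ will be used only in step (iii), and that is where I expect the real difficulty.

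For (i), the starting observation is that $a_{E,n}(x)$ is the $n$-th term of the non-increasing rearrangement of $(\Vert x_j\Vert)_j$, so the $F'$-valued Lorentz norm is the scalar Lorentz norm of $(\Vert a_j\Vert_{F'})_j$. Since $1<r,q<\infty$, it is well known that $(\ell_{(r,q)})'=\ell_{(r',q')}$, and combining this with the identity $\Vert a_j\Vert_{F'}=\sup_{\Vert y_j\Vert\le1}\vert\langle a_j,y_j\rangle\vert$ gives, for $a=(a_j)\in\ell_{(r',q')}(F')$,
\[
\Vert a\Vert_{(r',q')}=\sup\Big\{\Big\vert\sum_j\lambda_j\langle a_j,y_j\rangle\Big\vert:\Vert(\lambda_j)\Vert_{(r,q)}\le1,\ \sup_j\Vert y_j\Vert\le1\Big\}.
\]
I would apply this with $a_j=P_T(\varphi_j)$ for a finite family $(\varphi_j)\in\ell^{w}_{(s',p')}(E')$. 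Using $\langle P_T(\varphi_j),y_j\rangle=T(\varphi_j^{n}y_j)$ and linearity, the inner sum equals $T(\sum_j\lambda_j\varphi_j^{n}y_j)$, and Definition \ref{1.2} bounds the nuclear norm of that polynomial by $\Vert(\lambda_j)\Vert_{(r,q)}\Vert(\varphi_j)\Vert^{n}_{w,(s',p')}\sup_j\Vert y_j\Vert$. Taking the supremum yields $\Vert(P_T(\varphi_j))_j\Vert_{(r',q')}\le\Vert T\Vert\,\Vert(\varphi_j)\Vert^{n}_{w,(s',p')}$, so by Theorem \ref{novoJ} the polynomial $P_T$ is Lorentz $((r',q');(s',p'))$-summing with $\Vert\Psi(T)\Vert_{as}\le\Vert T\Vert$. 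For (ii), if $\Psi(T)=0$ then $T$ annihilates every $\varphi^{n}y$; since the partial sums of a nuclear representation converge in $\Vert\cdot\Vert_{N}$ (the tails $\Vert(\lambda_j)_{j>N}\Vert_{(r,q)}$ vanish), these special polynomials are dense in $\mathcal{P}_{N,((r,q);(s,p))}(^{n}E;F)$, and continuity forces $T=0$.

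For (iii), given $R\in\mathcal{P}_{as((r',q');(s',p'))}(^{n}E';F')$ I would define, on a nuclear representation $P=\sum_j\lambda_j\varphi_j^{n}y_j$,
\[
T(P):=\sum_j\lambda_j R(\varphi_j)(y_j).
\]
Boundedness is routine: Hölder's inequality for Lorentz spaces applied to $b_j=\Vert R(\varphi_j)\Vert_{F'}$ together with the summing estimate of Theorem \ref{novoJ} gives $\vert T(P)\vert\le\Vert R\Vert_{as}\Vert(\lambda_j)\Vert_{(r,q)}\Vert(\varphi_j)\Vert^{n}_{w,(s',p')}\sup_j\Vert y_j\Vert$, whence $\Vert T\Vert\le\Vert R\Vert_{as}$ after passing to the infimum over representations. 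The genuine obstacle is that $T(P)$ must be independent of the chosen representation. For \emph{finite} representations this is automatic, since the algebraic assignment $\varphi^{n}\otimes y\mapsto R(\varphi)(y)$ is well defined on the algebraic tensor product. For \emph{infinite} nuclear representations, however, two representations of the same polynomial could a priori produce different traces, and this is exactly the point at which the bounded approximation property of $E'$ is indispensable: it guarantees that the canonical map from the completed, Lorentz-weighted space of representations onto $\mathcal{P}_{N,((r,q);(s,p))}(^{n}E;F)$ is injective, i.e. that every representation of the zero polynomial is annihilated by the trace. Concretely, I would approximate the identity of $E'$ by finite-rank operators $S_{\alpha}$ supplied by the bounded approximation property, reduce each resulting trace to the finite (algebraically well-defined) case, and pass to the limit using the continuity estimate just obtained; this makes $T$ a well-defined element of $\mathcal{P}_{N,((r,q);(s,p))}(^{n}E;F)'$ with $\Psi(T)(\varphi)(y)=T(\varphi^{n}y)=R(\varphi)(y)$, so $\Psi(T)=R$.

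Finally, assembling the pieces: step (i) gives $\Vert\Psi(T)\Vert_{as}\le\Vert T\Vert$ for every $T$, while the functional built in step (iii) applied to $R=\Psi(T)$ coincides with $T$ by the injectivity of step (ii) and satisfies $\Vert T\Vert\le\Vert\Psi(T)\Vert_{as}$. These two inequalities bound $\Vert T\Vert$ and $\Vert\Psi(T)\Vert_{as}$ by one another, so $\Psi$ is a bijective topological isomorphism (isometric when the Lorentz duality and Hölder inequality are taken in their isometric rearrangement normalization). The one delicate step is the representation-independence in (iii), where the bounded approximation property of $E'$ does all the work.
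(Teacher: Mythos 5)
A preliminary remark: the paper does not prove this statement at all --- Theorem \ref{555} is imported verbatim from \cite{favaromatospellegrino} (Theorem 5.4 there), so there is no in-paper argument to compare against; what follows measures your proposal against the standard proof scheme that the cited reference follows, which is indeed the one you chose. Your steps (i) and (ii) are essentially complete and correct: reducing the $(r^{\prime},q^{\prime})$-norm of $(P_{T}(\varphi_{j}))_{j}$ to $\vert T(\sum_{j}\lambda_{j}\varphi_{j}^{n}y_{j})\vert$ via the duality $(\ell_{(r,q)})^{\prime}\cong\ell_{(r^{\prime},q^{\prime})}$ (legitimate since $r,q\in\,]1,\infty[$), estimating through Definition \ref{1.2} and Theorem \ref{novoJ}, and getting injectivity from the vanishing of the tails $\Vert(\lambda_{j})_{j>N}\Vert_{(r,q)}$ (here $q<\infty$ matters) are all sound, up to the equivalence constants in the Lorentz duality and H\"older inequalities, which you correctly flag as the reason the isomorphism is topological rather than isometric.

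The genuine gap is in step (iii), and it is not cosmetic, because the representation-independence of $T(P)=\sum_{j}\lambda_{j}R(\varphi_{j})(y_{j})$ is the entire content of the theorem --- everything else is bookkeeping. You name the right mechanism (finite-rank approximation of $I_{E^{\prime}}$) but the reduction ``to the finite case'' is only asserted, and two substantive points are missing. First, for the reduction to work the approximating operators must be weak$^{*}$-continuous, i.e. of the form $S_{\alpha}\varphi=\sum_{i}\varphi(x_{i})\psi_{i}$ with $x_{i}\in E$: only then does expanding $R(S_{\alpha}\varphi_{j})$ multilinearly convert $\sum_{j}\lambda_{j}R(S_{\alpha}\varphi_{j})(y_{j})$ into a finite combination of values $A(x_{i_{1}},\dots,x_{i_{n}})$ of the symmetric multilinear form $A$ of $P=\sum_{j}\lambda_{j}\varphi_{j}^{n}y_{j}$, which vanish when $P=0$. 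The bounded approximation property of $E^{\prime}$ directly hands you coefficients in $E^{\prime\prime}$, not in $E$; replacing them requires a standard but nontrivial extra step (local reflexivity, or the fact that the $\lambda$-BAP of $E^{\prime}$ can be witnessed by adjoints of finite-rank operators on $E$). Second, the passage to the limit in $\alpha$ needs uniform boundedness of $(S_{\alpha})$ together with a dominated-convergence argument in the Lorentz--H\"older estimate for $\sum_{j}\vert\lambda_{j}\vert\,\Vert R(S_{\alpha}\varphi_{j})-R(\varphi_{j})\Vert\,\Vert y_{j}\Vert$; neither is supplied. So the proposal is a correct plan with correct peripheral estimates, but the decisive step is left open rather than proved.
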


\subsection{New hypercyclic, existence and approximation results}

Now, suppose that $E^{\prime}$ has the bounded approximation property. The
three steps below are common steps to obtain hypercyclic results (Theorems
\ref{main1} and \ref{main2}) and existence and approximation results (Theorems
\ref{teoremaAproximacao1} and \ref{Teorema de existencia}) for convolution operators:

(1) To obtain the spaces $\mathcal{P}_{\widetilde{N},\left(  \left(
r,q\right)  ;\left(  s,p\right)  \right)  }\left(  ^{n}E\right)  $, for all
$n\in\mathbb{N}$, according to Definition \ref{def555}.

(2) To prove that $\left(  \mathcal{P}_{\widetilde{N},\left(  \left(
r,q\right)  ;\left(  s,p\right)  \right)  }\left(  ^{n}E\right)  \right)
_{n=0}^{\infty}$ is a holomorphy type.

(3) To prove that $\left(  \mathcal{P}_{\widetilde{N},\left(  \left(
r,q\right)  ;\left(  s,p\right)  \right)  }\left(  ^{n}E\right)  \right)
_{n=0}^{\infty}$ is a $\pi_{1}$-$\pi_{2}$-holomorphy type.

\bigskip

A further step to obtain Theorems \ref{teoremaAproximacao1} and
\ref{Teorema de existencia} is:

\bigskip

(4) To prove that $Exp_{as\left(  \left(  r^{\prime},q^{\prime}\right)
;\left(  s^{\prime},p^{\prime}\right)  \right)  }\left(  E^{\prime}\right)  $
(see Definition \ref{definition_exp_space}(b)) is closed under division.

\bigskip

Step (1) is satisfied due to Proposition \ref{t1.4} and Theorem \ref{555}. In
fact, Theorem \ref{555} assures that the Borel transform is an isomorphism
between $\mathcal{P}_{N,((r,q);(s,p))}(^{n}E;F)^{\prime}$ and $\mathcal{P}%
_{as((r^{\prime},q^{\prime});(s^{\prime},p^{\prime}))}(^{n}E^{\prime
};F^{\prime}).$ Thus, we can consider, for each $n\in\mathbb{N}$, the space
$\mathcal{P}_{\widetilde{N},\left(  \left(  r,q\right)  ;\left(  s,p\right)
\right)  }\left(  ^{n}E\right)  $, of all \emph{Lorentz $((r,q);(s,p))$%
-quasi-nuclear $n$-homogeneous polynomials from $E$ to $\mathbb{C}$},
according to Definition \ref{def555}.

Now, let us prove Step (2), i.e., $\left(  \mathcal{P}_{\widetilde{N},\left(
\left(  r,q\right)  ;\left(  s,p\right)  \right)  }\left(  ^{n}E\right)
\right)  _{n=0}^{\infty}$ is a holomorphy type. We only have to prove that
$\left(  \mathcal{P}_{\widetilde{N},\left(  \left(  r,q\right)  ;\left(
s,p\right)  \right)  }\left(  ^{n}E\right)  \right)  _{n=0}^{\infty}$ is
stable for derivatives with constant $C_{n,k}=\frac{n!}{\left(  n-k\right)
!}$ (see Proposition \ref{proposicao d^kP} below) and the result follows from
Corollary \ref{corollary_holom_type}.

\begin{proposition}
\label{proposicao d^kP}If $P\in\mathcal{P}_{N,\left(  \left(  r,q\right)
;\left(  s,p\right)  \right)  }\left(  ^{n}E;F\right)  ,$ $k=1,\ldots,n$ and
$x\in E,$ then $\hat{d}^{k}P\left(  x\right)  \in\mathcal{P}_{N,\left(
\left(  r,q\right)  ;\left(  s,p\right)  \right)  }\left(  ^{k}E;F\right)  $
and%
\[
\left\Vert \hat{d}^{k}P\left(  x\right)  \right\Vert _{N,\left(  \left(
r,q\right)  ;\left(  s,p\right)  \right)  }\leq\frac{n!}{\left(  n-k\right)
!}\left\Vert P\right\Vert _{N,\left(  \left(  r,q\right)  ;\left(  s,p\right)
\right)  }\left\Vert x\right\Vert ^{n-k}.
\]

\end{proposition}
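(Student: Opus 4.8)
The plan is to take an arbitrary Lorentz $((r,q);(s,p))$-nuclear representation of $P$, differentiate it termwise, and read off from the result an explicit nuclear representation of $\hat{d}^{k}P(x)$. Fix $\varepsilon>0$ and choose, as in Definition~\ref{1.2}, sequences $(\lambda_{j})\in\ell_{(r,q)}$, $(\varphi_{j})\in\ell_{(s',p')}^{w}(E')$ and $(y_{j})\in\ell_{\infty}(F)$ with $P(z)=\sum_{j}\lambda_{j}\varphi_{j}(z)^{n}y_{j}$ and $\|(\lambda_{j})\|_{(r,q)}\|(\varphi_{j})\|_{w,(s',p')}^{n}\|(y_{j})\|_{\infty}\leq\|P\|_{N,((r,q);(s,p))}+\varepsilon$. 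The associated symmetric $n$-linear mapping is then $A(z_{1},\dots,z_{n})=\sum_{j}\lambda_{j}\varphi_{j}(z_{1})\cdots\varphi_{j}(z_{n})y_{j}$, and the standard identity $\hat{d}^{k}P(x)(y)=\frac{n!}{(n-k)!}A(x^{n-k}y^{k})$, applied termwise (the interchange being justified by the convergence of the representation), gives
\[
\hat{d}^{k}P(x)(y)=\frac{n!}{(n-k)!}\sum_{j}\lambda_{j}\varphi_{j}(x)^{n-k}\varphi_{j}(y)^{k}y_{j}.
\]

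This expression is already in the shape required by Definition~\ref{1.2} for a $k$-homogeneous polynomial, once the scalar factor $\varphi_{j}(x)^{n-k}$ is absorbed into the bounded part. I would therefore set $\widetilde{y}_{j}=\frac{n!}{(n-k)!}\varphi_{j}(x)^{n-k}y_{j}$ and keep $(\lambda_{j})$ and $(\varphi_{j})$ unchanged, so that $\hat{d}^{k}P(x)(y)=\sum_{j}\lambda_{j}\varphi_{j}(y)^{k}\widetilde{y}_{j}$. Membership in the correct sequence spaces is immediate except for $(\widetilde{y}_{j})\in\ell_{\infty}(F)$: since $(\varphi_{j})\in\ell_{(s',p')}^{w}(E')$, the scalar sequence $(\varphi_{j}(x))_{j}$ lies in $\ell_{(s',p')}\subset c_{0}$ and is in particular bounded, whence $\sup_{j}|\varphi_{j}(x)|^{n-k}\|y_{j}\|<\infty$. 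This already yields $\hat{d}^{k}P(x)\in\mathcal{P}_{N,((r,q);(s,p))}(^{k}E;F)$.

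For the quantitative estimate the key point is to control $\|(\widetilde{y}_{j})\|_{\infty}$ by the weak Lorentz norm of $(\varphi_{j})$. Here I would use that the first approximation number $a_{E,1}$ is exactly the sup norm, so that for the scalar sequence one has $\|(\varphi_{j}(x))_{j}\|_{\infty}\leq\|(\varphi_{j}(x))_{j}\|_{(s',p')}$, together with $\|(\varphi_{j}(x))_{j}\|_{(s',p')}\leq\|x\|\,\|(\varphi_{j})\|_{w,(s',p')}$ (homogeneity plus the definition of the weak norm and the isometric embedding $E\hookrightarrow E''$). Consequently $\|(\widetilde{y}_{j})\|_{\infty}\leq\frac{n!}{(n-k)!}\|x\|^{n-k}\|(\varphi_{j})\|_{w,(s',p')}^{n-k}\|(y_{j})\|_{\infty}$, and feeding the three factors of the new representation into the definition of the nuclear quasi-norm gives
\[
\|\hat{d}^{k}P(x)\|_{N,((r,q);(s,p))}\leq\frac{n!}{(n-k)!}\|x\|^{n-k}\|(\lambda_{j})\|_{(r,q)}\|(\varphi_{j})\|_{w,(s',p')}^{n}\|(y_{j})\|_{\infty},
\]
where the exponents $k$ and $n-k$ of $\|(\varphi_{j})\|_{w,(s',p')}$ recombine into $n$. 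Letting $\varepsilon\to0$, that is, taking the infimum over all representations of $P$, yields the asserted bound.

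I expect the main obstacle to be the bookkeeping in this norm estimate rather than the differentiation itself: one must split off the factor $\varphi_{j}(x)^{n-k}$ in just the right way so that each of the three sequences lands in its space and so that the powers of the weak Lorentz norm reassemble into precisely $\|(\varphi_{j})\|_{w,(s',p')}^{n}$ with the clean constant $\frac{n!}{(n-k)!}$. The only genuinely Lorentz-specific input is the contractive embedding $\ell_{(s',p')}\hookrightarrow\ell_{\infty}$ used to extract $\sup_{j}|\varphi_{j}(x)|$; the termwise differentiation is routine and the case $k=n$ (where $n-k=0$ and no factor $\varphi_{j}(x)$ appears) is trivial, the estimate then reducing to the identity $\hat{d}^{n}P(x)=n!\,P$.
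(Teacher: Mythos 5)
Your proof is correct and takes essentially the same route as the paper's: differentiate a nuclear representation termwise to get $\hat{d}^{k}P(x)=\frac{n!}{(n-k)!}\sum_{j}\lambda_{j}\varphi_{j}(x)^{n-k}\varphi_{j}^{k}y_{j}$, absorb the scalar factor $\varphi_{j}(x)^{n-k}$ into one slot of the representation, and bound $\sup_{j}|\varphi_{j}(x)|$ by $\Vert x\Vert\,\Vert(\varphi_{j})_{j=1}^{\infty}\Vert_{w,(s^{\prime},p^{\prime})}$ before taking the infimum over representations. The only (immaterial) difference is that you fold the factor into the $\ell_{\infty}(F)$ component $(y_{j})$, whereas the paper folds $(\varphi_{j}(x/\Vert x\Vert))^{n-k}$ into the scalar sequence $(\lambda_{j})$ via the module inequality $\Vert(\lambda_{j}a_{j})\Vert_{(r,q)}\leq\Vert(\lambda_{j})\Vert_{(r,q)}\sup_{j}|a_{j}|$; both yield the identical final estimate.
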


\begin{proof}
Let%
\[
P\left(  x\right)  =\sum\limits_{j=1}^{\infty}\lambda_{j}\left(  \varphi
_{j}\left(  x\right)  \right)  ^{n}y_{j},
\]
with $\left(  \lambda_{j}\right)  _{j=1}^{\infty}\in\ell_{\left(  r,q\right)
},$ $\left(  \varphi_{j}\right)  _{j=1}^{\infty}\in\ell_{\left(  s^{\prime
},p^{\prime}\right)  }^{w}\left(  E^{\prime}\right)  $ and $\left(
y_{j}\right)  _{j=1}^{\infty}\in\ell_{\infty}\left(  F\right)  .$ Then%
\begin{equation}
\hat{d}^{k}P\left(  x\right)  =\frac{n!}{\left(  n-k\right)  !}\sum
\limits_{j=1}^{\infty}\lambda_{j}\left(  \varphi_{j}\left(  x\right)  \right)
^{n-k}\varphi_{j}^{k}y_{j}, \label{representacao-d^kP}%
\end{equation}
for $k=1,\ldots,n.$ Let $y=\frac{x}{\left\Vert x\right\Vert },$ and note that
\[
\left\Vert \left(  \lambda_{j}\left(  \varphi_{j}\left(  y\right)  \right)
^{n-k}\right)  _{j=1}^{\infty}\right\Vert _{\left(  r,q\right)  }%
\leq\left\Vert \left(  \lambda_{j}\right)  _{j=1}^{\infty}\right\Vert
_{\left(  r,q\right)  }\sup_{j\in\mathbb{N}}\left\vert \varphi_{j}\left(
y\right)  \right\vert ^{n-k}.
\]

Now we obtain%
\begin{align*}
&  \frac{n!}{\left(  n-k\right)  !}\left\Vert x\right\Vert ^{n-k}\left\Vert
\left(  \lambda_{j}\left(  \varphi_{j}\left(  y\right)  \right)
^{n-k}\right)  _{j=1}^{\infty}\right\Vert _{\left(  r,q\right)  }\left\Vert
\left(  \varphi_{j}\right)  _{j=1}^{\infty}\right\Vert _{w,\left(  s^{\prime
},p^{\prime}\right)  }^{k}\left\Vert \left(  y_{j}\right)  _{j=1}^{\infty
}\right\Vert _{\infty}\\
&  \leq\frac{n!}{\left(  n-k\right)  !}\left\Vert x\right\Vert ^{n-k}%
\left\Vert \left(  \lambda_{j}\right)  _{j=1}^{\infty}\right\Vert _{\left(
r,q\right)  }\sup_{j\in\mathbb{N}}\left\vert \varphi_{j}\left(  y\right)
\right\vert ^{n-k}\left\Vert \left(  \varphi_{j}\right)  _{j=1}^{\infty
}\right\Vert _{w,\left(  s^{\prime},p^{\prime}\right)  }^{k}\left\Vert \left(
y_{j}\right)  _{j=1}^{\infty}\right\Vert _{\infty}\\
&  \leq\frac{n!}{\left(  n-k\right)  !}\left\Vert x\right\Vert ^{n-k}%
\left\Vert \left(  \lambda_{j}\right)  _{j=1}^{\infty}\right\Vert _{\left(
r,q\right)  }\left\Vert \left(  \varphi_{j}\right)  _{j=1}^{\infty}\right\Vert
_{w,\left(  s^{\prime},p^{\prime}\right)  }^{n-k}\left\Vert \left(
\varphi_{j}\right)  _{j=1}^{\infty}\right\Vert _{w,\left(  s^{\prime
},p^{\prime}\right)  }^{k}\left\Vert \left(  y_{j}\right)  _{j=1}^{\infty
}\right\Vert _{\infty}\\
&  =\frac{n!}{\left(  n-k\right)  !}\left\Vert x\right\Vert ^{n-k}\left\Vert
\left(  \lambda_{j}\right)  _{j=1}^{\infty}\right\Vert _{\left(  r,q\right)
}\left\Vert \left(  \varphi_{j}\right)  _{j=1}^{\infty}\right\Vert _{w,\left(
s^{\prime},p^{\prime}\right)  }^{n}\left\Vert \left(  y_{j}\right)
_{j=1}^{\infty}\right\Vert _{\infty}<+\infty.
\end{align*}
Thus $\left(  \ref{representacao-d^kP}\right)  $ is a valid Lorentz $\left(
\left(  r,q\right)  ;\left(  s,p\right)  \right)  $-nuclear representation of
$\hat{d}^{k}P\left(  x\right)  $ and in view of the last inequalities we can
write%
\[
\left\Vert \hat{d}^{k}P\left(  x\right)  \right\Vert _{N,\left(  \left(
r,q\right)  ;\left(  s,p\right)  \right)  }\leq\frac{n!}{\left(  n-k\right)
!}\left\Vert x\right\Vert ^{n-k}\left\Vert \left(  \lambda_{j}\right)
_{j=1}^{\infty}\right\Vert _{\left(  r,q\right)  }\left\Vert \left(
\varphi_{j}\right)  _{j=1}^{\infty}\right\Vert _{w,\left(  s^{\prime
},p^{\prime}\right)  }^{n}\left\Vert \left(  y_{j}\right)  _{j=1}^{\infty
}\right\Vert _{\infty}.
\]
Hence%
\[
\left\Vert \hat{d}^{k}P\left(  x\right)  \right\Vert _{N,\left(  \left(
r,q\right)  ;\left(  s,p\right)  \right)  }\leq\frac{n!}{\left(  n-k\right)
!}\left\Vert P\right\Vert _{N,\left(  \left(  r,q\right)  ;\left(  s,p\right)
\right)  }\left\Vert x\right\Vert ^{n-k}%
\]
as we wanted to show.
\end{proof}

Now we are able to define the space of Lorentz $\left(  \left(  r,q\right)
;\left(  s,p\right)  \right)  $-quasi-nuclear entire mappings of bounded type,
according to Definition \ref{Definition f holomorfia}.

\begin{definition}
\textrm{\textrm{An entire mapping $f\colon E\longrightarrow\mathbb{C}$ is said
to be \emph{Lorentz }$\left(  \left(  r,q\right)  ;\left(  s,p\right)
\right)  $\emph{-quasi-nuclear of bounded type} if\newline(1) $\hat{d}^{n}%
$\ $f(0)\in\mathcal{P}_{\tilde{N},\left(  \left(  r,q\right)  ;\left(
s,p\right)  \right)  }\left(  ^{n}E\right)  ,$ for all $n\in\mathbb{N}_{0}%
,$\newline(2) $\lim\limits_{n\rightarrow\infty}\left(  \frac{1}{n!}\left\Vert
\hat{d}^{n}f(0)\right\Vert _{\tilde{N},\left(  \left(  r,q\right)  ;\left(
s,p\right)  \right)  }\right)  ^{\frac{1}{n}}=0.$\newline The space of all
entire mappings $f\colon E\longrightarrow\mathbb{C}$ that are Lorentz $\left(
\left(  r,q\right)  ;\left(  s,p\right)  \right)  $\emph{-}quasi-nuclear of
bounded type is denoted by $\mathcal{H}_{\tilde{N}b,\left(  \left(
r,q\right)  ;\left(  s,p\right)  \right)  }(E)$ and it is a Fr\'{e}chet space
with the topology generated by the family of seminorms:
\begin{equation}
f\in\mathcal{H}_{\widetilde{N}b,\left(  \left(  r,q\right)  ;\left(
s,p\right)  \right)  }(E)\mapsto\Vert f\Vert_{\widetilde{N}b,\left(  \left(
r,q\right)  ;\left(  s,p\right)  \right)  ,\rho}=\sum_{m=0}^{\infty}\frac
{\rho^{m}}{m!}\Vert\hat{d}^{m}f(0)\Vert_{\widetilde{N},\left(  \left(
r,q\right)  ;\left(  s,p\right)  \right)  },\label{seminorma_f}%
\end{equation}
for all $\rho>0$. } }
\end{definition}


Now we have to prove that $\left(  \mathcal{P}_{\widetilde{N},\left(  \left(
r,q\right)  ;\left(  s,p\right)  \right)  }\left(  ^{n}E\right)  \right)
_{n=0}^{\infty}$ is a $\pi_{1}$-$\pi_{2}$-holomorphy type.


\begin{proposition}
\label{pi1_tipo_holomorfia}$\left(  \mathcal{P}_{\widetilde{N},\left(  \left(
r,q\right)  ;\left(  s,p\right)  \right)  }\left(  ^{n}E\right)  \right)
_{n=0}^{\infty}$ is a $\pi_{1}$-holomorphy type.
\end{proposition}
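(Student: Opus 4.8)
The plan is to verify the two defining conditions of a $\pi_{1}$-holomorphy type (Definition~\ref{pi-tipo de holomorfia}) for $\Theta=\widetilde{N},\left(\left(r,q\right);\left(s,p\right)\right)$ with $F=\mathbb{C}$; that these spaces already form a holomorphy type was settled in Step~(2) via Proposition~\ref{proposicao d^kP} and Corollary~\ref{corollary_holom_type}. Condition~(i) asks that finite type polynomials lie in the spaces (which we already have, since $\mathcal{P}_{f}(^{n}E)\subset\mathcal{P}_{N,((r,q);(s,p))}(^{n}E)\subset\mathcal{P}_{\widetilde{N},((r,q);(s,p))}(^{n}E)$, the first inclusion because each $\varphi^{n}$ is nuclear and the second being the general fact noted before Proposition~\ref{lema_para_pi1}) together with an estimate $\|\varphi^{n}\|_{\widetilde{N},((r,q);(s,p))}\le K^{n}\|\varphi\|^{n}$; condition~(ii) asks that $\mathcal{P}_{f}(^{n}E)$ be dense in $\mathcal{P}_{\widetilde{N},((r,q);(s,p))}(^{n}E)$.

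For condition~(i) the first step is to estimate the \emph{original} nuclear quasi-norm of $\varphi^{n}$. I would use the one-term representation $\varphi^{n}(x)=\lambda_{1}(\varphi_{1}(x))^{n}y_{1}$ with $\lambda_{1}=1$, $\varphi_{1}=\varphi$ and $y_{1}=1$. A direct computation of these singleton sequences gives $\|(1,0,0,\ldots)\|_{(r,q)}=1$, $\|(\varphi,0,0,\ldots)\|_{w,(s',p')}=\sup_{\|\psi\|\le1}|\psi(\varphi)|=\|\varphi\|$ and $\|(1,0,0,\ldots)\|_{\infty}=1$, whence $\|\varphi^{n}\|_{N,((r,q);(s,p))}\le\|\varphi\|^{n}$. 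This is exactly the hypothesis of Proposition~\ref{lema_para_pi1}(a) with $K=1$, which then yields $\|\varphi^{n}\|_{\widetilde{N},((r,q);(s,p))}\le\|\varphi\|^{n}$; multiplying by a scalar $b$ gives condition~(i) with constant $K=1$.

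For condition~(ii) the plan is to invoke Proposition~\ref{lema_para_pi1}(b), which reduces the density of $\mathcal{P}_{f}(^{n}E)$ in the completed space to its density in the original quasi-normed space $(\mathcal{P}_{N,((r,q);(s,p))}(^{n}E),\|\cdot\|_{N,((r,q);(s,p))})$. Given $P(x)=\sum_{j=1}^{\infty}\lambda_{j}(\varphi_{j}(x))^{n}y_{j}$, each partial sum $P_{N}(x)=\sum_{j=1}^{N}\lambda_{j}(\varphi_{j}(x))^{n}y_{j}$ is of finite type, and using the representation of $P-P_{N}$ obtained by zeroing out the first $N$ coefficients $\lambda_{j}$ (keeping the full sequences $(\varphi_{j})$ and $(y_{j})$) one gets
\[
\|P-P_{N}\|_{N,((r,q);(s,p))}\le\left\|(\lambda_{j})_{j>N}\right\|_{(r,q)}\left\|(\varphi_{j})_{j=1}^{\infty}\right\|_{w,(s',p')}^{n}\left\|(y_{j})_{j=1}^{\infty}\right\|_{\infty}.
\]
Thus it suffices to show that the tail Lorentz quasi-norm $\|(\lambda_{j})_{j>N}\|_{(r,q)}$ tends to $0$.

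The main obstacle is precisely this last point, i.e.\ the absolute continuity of the Lorentz quasi-norm (equivalently, the density of $c_{00}$ in $\ell_{(r,q)}$). Here I would exploit that $q<\infty$: writing $\|(\lambda_{j})_{j>N}\|_{(r,q)}^{q}=\sum_{k}k^{q/r-1}\big((\lambda_{j})_{j>N}^{*}\big)_{k}^{q}$ in terms of the non-increasing rearrangement, one observes that the rearrangement of the tail is dominated termwise by the rearrangement $\lambda^{*}$ of the whole sequence, and that for each fixed $k$ it tends to $0$ as $N\to\infty$ (because $(\lambda_{j})\in\ell_{(r,q)}\subset c_{0}$, so only finitely many indices exceed any given threshold). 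Since the majorant series $\sum_{k}k^{q/r-1}(\lambda_{k}^{*})^{q}=\|\lambda\|_{(r,q)}^{q}$ converges, dominated convergence gives $\|(\lambda_{j})_{j>N}\|_{(r,q)}\to0$. This is the step where the standing hypothesis $q<\infty$ is essential, the analogous statement being false for $q=\infty$.
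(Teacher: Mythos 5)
Your proof is correct and follows the same route as the paper: both arguments reduce everything to the original quasi-normed space $\left(\mathcal{P}_{N,((r,q);(s,p))}(^{n}E),\Vert\cdot\Vert_{N,((r,q);(s,p))}\right)$ via Proposition~\ref{lema_para_pi1}, the only difference being that the paper simply cites \cite[Example 4.5 and Lemma 4.10]{favaromatospellegrino} for the identity $\Vert\varphi^{n}\Vert_{N,((r,q);(s,p))}=\Vert\varphi\Vert^{n}$ and for the density of $\mathcal{P}_{f}(^{n}E)$, whereas you prove these two facts inline (the one-term representation for the norm estimate, and the partial-sum/tail argument with dominated convergence for the density). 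Your inline arguments are sound, including the correct observation that $q<\infty$ is what makes the Lorentz quasi-norm absolutely continuous.
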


\begin{proof}
In \cite[Example 4.5]{favaromatospellegrino} it was proved that $\mathcal{P}%
_{f}\left(  ^{n}E\right)  $ is contained in $\mathcal{P}_{N,\left(  \left(
r,q\right)  ;\left(  s,p\right)  \right)  }\left(  ^{n}E\right)  $ and
\[
\Vert\phi^{n}\Vert_{N,\left(  \left(  r,q\right)  ;\left(  s,p\right)
\right)  }=\Vert\phi\Vert^{n}%
\]
for all $\phi\in E^{\prime}$ and $n\in\mathbb{N}$. Besides, from \cite[Lemma
4.10]{favaromatospellegrino} we know that the space of finite type polynomials
$\ \mathcal{P}_{f}\left(  ^{n}E\right)  $ is dense in $\left(  \mathcal{P}%
_{N,((r,q);(s,p))}(^{n}E),\left\Vert \cdot\right\Vert _{N,((r,q);(s,p))}%
\right)  $. Thus, it follows from Proposition \ref{lema_para_pi1} that
$\mathcal{P}_{f}\left(  ^{n}E\right)  $ is dense in $\left(  \mathcal{P}%
_{\widetilde{N},((r,q);(s,p))}(^{n}E),\left\Vert \cdot\right\Vert
_{\widetilde{N},((r,q);(s,p))}\right)  $ and
\begin{equation}
\Vert\phi^{n}\Vert_{\widetilde{N},\left(  \left(  r,q\right)  ;\left(
s,p\right)  \right)  }=\Vert\phi\Vert^{n}, \label{norma_phi}%
\end{equation}
for all $\phi\in E^{\prime}$ and $n\in\mathbb{N}$, since the constants $K$ and
$C_{\Delta_{n}}$, in this case, may be taken equal to $1$.
\end{proof}

\begin{proposition}
$\left(  \mathcal{P}_{\widetilde{N},\left(  \left(  r,q\right)  ;\left(
s,p\right)  \right)  }\left(  ^{n}E\right)  \right)  _{n=0}^{\infty}$ is a
$\pi_{2}$-holomorphy type.
\end{proposition}

\begin{proof}
Let $T\in\left[  \mathcal{H}_{\widetilde{N}b,((r,q);(s,p))}(E)\right]
^{\prime}$, $n,k\in\mathbb{N}_{0},$ $k\leq n$ and $C,\rho>0$ be constants such
that
\begin{equation}
\left\vert T\left(  f\right)  \right\vert \leq C\left\Vert f\right\Vert
_{\widetilde{N}b,((r,q);(s,p)),\rho}~\text{for~every}~f\in\mathcal{H}%
_{\widetilde{N}b,((r,q);(s,p))}(E). \label{desig_norma_f}%
\end{equation}
For\textrm{ }$P\in\mathcal{P}_{\widetilde{N},((r,q);(s,p))}(^{n}E)$, we will
show that the $\left(  n-k\right)  $-homogeneous polynomial%
\begin{align*}
T\left(  \widehat{A(\cdot)^{k}}\right)  \colon E  &  \longrightarrow
\mathbb{C}\\
y  &  \mapsto T\left(  A(\cdot)^{k}y^{n-k}\right)
\end{align*}
where $A\colon E^{n}\longrightarrow C$ is the unique continuous symmetric
$n$-linear mapping such that $P=\hat{A},$ belongs to $\mathcal{P}%
_{\widetilde{N},((r,q);(s,p))}(^{n-k}E)$ and%
\[
\Vert T(\widehat{A(\cdot)^{k}})\Vert_{\widetilde{N},((r,q);(s,p))}\leq
C\cdot\rho^{k}\Vert P\Vert_{\widetilde{N},((r,q);(s,p))}.
\]
First, suppose that $P\in\mathcal{P}_{N,\left(  \left(  r,q\right)  ;\left(
s,p\right)  \right)  }(^{n}E)$. Thus
\[
P=%
{\displaystyle\sum\limits_{j=1}^{\infty}}
\lambda_{j}\varphi_{j}^{n}\text{,}%
\]
with $(\lambda_{j})\in\ell_{(r,q)}$ and $(\varphi_{j})\in\ell_{(s^{\prime
},p^{\prime})}^{w}(E^{\prime})$, and for every $y\in E$ we have%
\begin{align*}
T\left(  \widehat{A(\cdot)^{k}}\right)  (y)  &  =T(A(\cdot)^{k}y^{n-k}%
)=T\left(
{\displaystyle\sum\limits_{j=1}^{\infty}}
\lambda_{j}\varphi_{j}^{k}\varphi_{j}(y)^{n-k}\right) \\
&  =%
{\displaystyle\sum\limits_{j=1}^{\infty}}
\lambda_{j}T\left(  \varphi_{j}^{k}\right)  \varphi_{j}(y)^{n-k}.
\end{align*}
Now, to prove that $T\left(  \widehat{A(\cdot)^{k}}\right)  \in\mathcal{P}%
_{N,\left(  \left(  r,q\right)  ;\left(  s,p\right)  \right)  }(^{n-k}E),\ $we
only have to show that $(\lambda_{j}T\left(  \varphi_{j}^{k}\right)  )\in
\ell_{(r,q)}$ since we already have $(\varphi_{j})\in\ell_{(s^{\prime
},p^{\prime})}^{w}(E^{\prime}).$ First, note that%
\begin{gather*}
\left\Vert \left(  T\left(  \varphi_{j}^{k}\right)  \right)  _{j=1}^{\infty
}\right\Vert _{\infty}=\sup_{j}\left\vert T\left(  \varphi_{j}^{k}\right)
\right\vert \overset{(\ref{desig_norma_f})}{\leq}\sup_{j}\left\Vert
\varphi_{j}^{k}\right\Vert _{\widetilde{N},\left(  \left(  r,q\right)
;\left(  s,p\right)  \right)  ,\rho}\\
\overset{\left(  \ref{seminorma_f}\right)  }{=}C\rho^{k}\sup_{j}\Vert
\varphi_{j}^{k}\Vert_{\widetilde{N},\left(  \left(  r,q\right)  ;\left(
s,p\right)  \right)  }\overset{\left(  \ref{norma_phi}\right)  }{=}C\rho
^{k}\sup_{j}\Vert\varphi_{j}\Vert^{k}\\
=C\rho^{k}\left\Vert \left(  \Vert\varphi_{j}\Vert\right)  _{j=1}^{\infty
}\right\Vert _{\infty}^{k}\leq C\rho^{k}\left\Vert (\varphi_{j})_{j=1}%
^{\infty}\right\Vert _{w,(s^{\prime},p^{\prime})}^{k}.
\end{gather*}

Hence%

\begin{align*}
\left\Vert \left(  \lambda_{j}T\left(  \varphi_{j}^{k}\right)  \right)
_{j=1}^{\infty}\right\Vert _{(r,q)}  &  \leq\left\Vert \left(  \lambda
_{j}\right)  _{j=1}^{\infty}\right\Vert _{(r,q)}\left\Vert \left(  T\left(
\varphi_{j}^{k}\right)  \right)  _{j=1}^{\infty}\right\Vert _{\infty}\\
&  \leq C\rho^{k}\left\Vert \left(  \lambda_{j}\right)  _{j=1}^{\infty
}\right\Vert _{(r,q)}\left\Vert (\varphi_{j})_{j=1}^{\infty}\right\Vert
_{w,(s^{\prime},p^{\prime})}^{k}<\infty
\end{align*}
and it follows that $T\left(  \widehat{A(\cdot)^{k}}\right)  \in
\mathcal{P}_{N,\left(  \left(  r,q\right)  ;\left(  s,p\right)  \right)
}(^{n-k}E).$ Moreover%
\begin{align*}
\left\Vert T\left(  \widehat{A(\cdot)^{k}}\right)  \right\Vert _{N,\left(
\left(  r,q\right)  ;\left(  s,p\right)  \right)  }  &  \leq\left\Vert \left(
\lambda_{j}T\left(  \varphi_{j}^{k}\right)  \right)  _{j=1}^{\infty
}\right\Vert _{(r,q)}\left\Vert (\varphi_{j})_{j=1}^{\infty}\right\Vert
_{w,(s^{\prime},p^{\prime})}^{n-k}\\
&  \leq\left[  C\rho^{k}\left\Vert \left(  \lambda_{j}\right)  _{j=1}^{\infty
}\right\Vert _{(r,q)}\left\Vert (\varphi_{j})_{j=1}^{\infty}\right\Vert
_{w,(s^{\prime},p^{\prime})}^{k}\right]  \left\Vert (\varphi_{j}%
)_{j=1}^{\infty}\right\Vert _{w,(s^{\prime},p^{\prime})}^{n-k}\\
&  =C\rho^{k}\left\Vert \left(  \lambda_{j}\right)  _{j=1}^{\infty}\right\Vert
_{(r,q)}\left\Vert (\varphi_{j})_{j=1}^{\infty}\right\Vert _{w,(s^{\prime
},p^{\prime})}^{n}.
\end{align*}
and so%
\[
\left\Vert T\left(  \widehat{A(\cdot)^{k}}\right)  \right\Vert _{\widetilde
{N},\left(  \left(  r,q\right)  ;\left(  s,p\right)  \right)  }\overset
{\left(  \ref{dggg}\right)  }{\leq}\left\Vert T\left(  \widehat{A(\cdot)^{k}%
}\right)  \right\Vert _{N,\left(  \left(  r,q\right)  ;\left(  s,p\right)
\right)  }\leq C\rho^{k}\left\Vert P\right\Vert _{N,\left(  \left(
r,q\right)  ;\left(  s,p\right)  \right)  }.
\]
\newline Now, if we define%
\begin{gather*}
\psi:\mathcal{P}_{N,\left(  \left(  r,q\right)  ;\left(  s,p\right)  \right)
}(^{n}E)\rightarrow\mathcal{P}_{N,\left(  \left(  r,q\right)  ;\left(
s,p\right)  \right)  }(^{n-k}E)\\
\text{ \ \ \ \ \ \ \ \ \ \ \ }P\mapsto T\left(  \widehat{A(\cdot)^{k}}\right)
\end{gather*}
and proceeding in a similar way as in the proof of Theorem
\ref{Theo_stability_for_derivatives}, the result follows.
\end{proof}

Now we can state the hypercyclicity results in this new framework:

\begin{theorem}
Let $E^{\prime}$ be separable. Then every convolution operator on
$\mathcal{H}_{\widetilde{N}b,\left(  \left(  r,q\right)  ;\left(  s,p\right)
\right)  }\left(  E\right)  $ which is not a scalar multiple of the identity
is hypercyclic.
\end{theorem}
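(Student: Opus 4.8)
The plan is to deduce this statement directly from Theorem~\ref{main1}, which asserts that when $E^{\prime}$ is separable and $(\mathcal{P}_{\Theta}(^{m}E))_{m=0}^{\infty}$ is a $\pi_{1}$-holomorphy type from $E$ to $\mathbb{C}$, every convolution operator on $\mathcal{H}_{\Theta b}(E)$ which is not a scalar multiple of the identity is hypercyclic. Thus the entire task reduces to checking that the concrete sequence $\Theta=\widetilde{N},\left(\left(r,q\right);\left(s,p\right)\right)$ satisfies the two hypotheses of that theorem for the base space at hand.

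First I would observe that separability of $E^{\prime}$ is already assumed in the statement, so that hypothesis is free. It remains to certify that $\left(\mathcal{P}_{\widetilde{N},\left(\left(r,q\right);\left(s,p\right)\right)}\left(^{n}E\right)\right)_{n=0}^{\infty}$ is a $\pi_{1}$-holomorphy type. This has in fact been assembled in the preceding development: that the sequence is a holomorphy type follows from Corollary~\ref{corollary_holom_type}, whose hypothesis---stability for derivatives with the admissible constant $C_{n,k}\leq\frac{n!}{\left(n-k\right)!}$---is exactly the content of Proposition~\ref{proposicao d^kP}; and Proposition~\ref{pi1_tipo_holomorfia} establishes that this holomorphy type is moreover $\pi_{1}$, by exhibiting finite type polynomials inside each $\mathcal{P}_{\widetilde{N},\left(\left(r,q\right);\left(s,p\right)\right)}\left(^{n}E\right)$ with the required estimate $\Vert\phi^{n}\Vert_{\widetilde{N},\left(\left(r,q\right);\left(s,p\right)\right)}=\Vert\phi\Vert^{n}$ and by proving their density. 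Note that for the hypercyclicity conclusion only the $\pi_{1}$ property is needed; the $\pi_{2}$ property plays no role here and is reserved for the existence and approximation results.

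With both hypotheses verified, the proof is then a one-line invocation of Theorem~\ref{main1}. I would therefore not expect any genuine obstacle inside this theorem itself. The substantive work has been deliberately relocated into the earlier propositions: the real difficulty lies in Proposition~\ref{pi1_tipo_holomorfia} (controlling the norm of $\phi^{n}$ under the new dual-preserving norm $\left\Vert\cdot\right\Vert_{\widetilde{N}}$, which is possible precisely because the constants $K$ and $C_{\Delta_{n}}$ can be taken equal to $1$ and because density of finite type polynomials transfers from $\left\Vert\cdot\right\Vert_{N}$ to $\left\Vert\cdot\right\Vert_{\widetilde{N}}$ via Proposition~\ref{lema_para_pi1}) and in Proposition~\ref{proposicao d^kP} (the derivative estimate for Lorentz nuclear polynomials). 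Granting those, the present statement follows immediately.
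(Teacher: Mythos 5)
Your proposal is correct and matches the paper's (implicit) argument exactly: the paper states this theorem as an immediate consequence of Theorem \ref{main1} once Steps (1)--(3) of Section \ref{sec4} have established that $\left(\mathcal{P}_{\widetilde{N},\left(\left(r,q\right);\left(s,p\right)\right)}\left(^{n}E\right)\right)_{n=0}^{\infty}$ is a ($\pi_{1}$-)holomorphy type, via Proposition \ref{proposicao d^kP}, Corollary \ref{corollary_holom_type} and Proposition \ref{pi1_tipo_holomorfia}. Your observation that only the $\pi_{1}$ property is needed here (with $\pi_{2}$ reserved for Theorem \ref{main2} and the existence/approximation results) is also accurate; the only hypothesis you leave tacit is the standing assumption that $E^{\prime}$ has the bounded approximation property, which underlies the very construction of the spaces via Theorem \ref{555}.
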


\begin{theorem}
Let $E^{\prime}$ be separable and $T\in\lbrack\mathcal{H}_{\widetilde
{N}b,\left(  \left(  r,q\right)  ;\left(  s,p\right)  \right)  }\left(
E\right)  ]^{\prime}$ be a linear functional which is not a scalar multiple of
$\delta_{0}$. Then $\bar{\Gamma}_{N,\left(  \left(  r,q\right)  ;\left(
s,p\right)  \right)  }(T)$ is a convolution operator that is not a scalar
multiple of the identity, hence hypercyclic.
\end{theorem}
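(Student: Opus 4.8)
The plan is to recognize this statement as a direct application of Theorem~\ref{main2} to the specific holomorphy type $\Theta=\widetilde{N},\left(\left(r,q\right);\left(s,p\right)\right)$ constructed in this section. Essentially all of the substance has already been assembled in the two propositions that precede this theorem, so the proof reduces to checking that every hypothesis of Theorem~\ref{main2} is in force and then quoting it.

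First I would collect the structural facts. Proposition~\ref{pi1_tipo_holomorfia} shows that $\left(\mathcal{P}_{\widetilde{N},\left(\left(r,q\right);\left(s,p\right)\right)}\left(^{n}E\right)\right)_{n=0}^{\infty}$ is a $\pi_{1}$-holomorphy type, and the proposition stated immediately after it shows that the same sequence is a $\pi_{2}$-holomorphy type; together these say precisely that $\Theta$ is a $\pi_{1}$-$\pi_{2}$-holomorphy type. (That the sequence is a holomorphy type at all was already secured by Proposition~\ref{proposicao d^kP} combined with Corollary~\ref{corollary_holom_type}.) The two remaining hypotheses of Theorem~\ref{main2} are handed to us outright in the statement: $E^{\prime}$ is separable, and $T\in[\mathcal{H}_{\widetilde{N}b,\left(\left(r,q\right);\left(s,p\right)\right)}\left(E\right)]^{\prime}$ is not a scalar multiple of $\delta_{0}$.

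With all hypotheses verified, I would simply invoke Theorem~\ref{main2} with this $\Theta$: it yields at once that $\bar{\Gamma}_{N,\left(\left(r,q\right);\left(s,p\right)\right)}(T)$ is a convolution operator which is not a scalar multiple of the identity, and hence is hypercyclic (indeed mixing, by the remark following Theorem~\ref{main2}). There is essentially no obstacle left in this concluding step; the only point worth double-checking is purely notational, namely that $\bar{\Gamma}_{N,\left(\left(r,q\right);\left(s,p\right)\right)}$ is exactly the operator $\bar{\Gamma}_{\Theta}$ of Theorem~\ref{main2} for the present type, so that the conclusion transfers verbatim. The genuine work of the argument lives entirely in the earlier propositions establishing the $\pi_{1}$ and $\pi_{2}$ properties, together with the general machinery of Section~\ref{sec2} that let us pass from the quasi-normed nuclear spaces to the normed spaces $\mathcal{P}_{\widetilde{N},\left(\left(r,q\right);\left(s,p\right)\right)}$ while preserving the Borel-transform duality.
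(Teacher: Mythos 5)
Your proposal is correct and matches the paper's own (implicit) argument exactly: the paper states this theorem without further proof precisely because it is the application of Theorem~\ref{main2} to the $\pi_{1}$-$\pi_{2}$-holomorphy type $\left(\mathcal{P}_{\widetilde{N},\left(\left(r,q\right);\left(s,p\right)\right)}\left(^{n}E\right)\right)_{n=0}^{\infty}$ established in the two preceding propositions, with the standing assumption (made at the start of the subsection) that $E^{\prime}$ has the bounded approximation property guaranteeing the Borel-transform duality. No gaps; nothing further to add.
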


\begin{remark}
\rm A recent result of Pinasco, Muro, Savransky \cite{MPS} shows that
nontrivial convolution operators on certain spaces of entire functions on a
Banach space are strongly mixing in the gaussian sense, in particular
frequently hypercyclic. We believe that, using the corresponding auxiliary
results from \cite{BBFJ}, their result holds for $\mathcal{H}_{\Theta b}(E)$,
when $\Theta$ is a $\pi_{1}$-holomorphy type. In this case, the reasoning used
to prove the two theorems above actually proves that the convolution operators
of the two theorems above can be proved to be strongly mixing in the gaussian
sense. 
\end{remark}

Now, according to Definition \ref{definition_exp_space}(b) we introduce the
\emph{Lorentz summing functions of exponential type}:

\begin{definition}
\textrm{\textrm{An entire mapping $f\colon E\longrightarrow\mathbb{C}$ is said
to be of \emph{Lorentz }$\left(  \left(  s,p\right)  ;\left(  r,q\right)
\right)  $\emph{-summing exponential type} if $\hat{d}^{n}$\ $f(0)\in
\mathcal{P}_{as\left(  \left(  s,p\right)  ;\left(  r,q\right)  \right)
}\left(  ^{n}E\right)  ,$ for all $n\in\mathbb{N}_{0},$ and there are $C\geq0$
and $c>0$ such that%
\[
\left\Vert \hat{d}^{n}f(0)\right\Vert _{as\left(  \left(  s,p\right)  ;\left(
r,q\right)  \right)  }\leq Cc^{n},
\]
for all $n\in\mathbb{N}_{0}.$ } }

\textrm{\textrm{The vector space of all these mappings is denoted by
$Exp_{as\left(  \left(  s,p\right)  ;\left(  r,q\right)  \right)  }\left(
E\right)  .$ } }
\end{definition}

This definition was motivated by the definition of mappings of exponential
type (see \cite{G}):

\begin{definition}
\textrm{\textrm{\label{def_exp_type} An entire mapping $f\colon
E\longrightarrow F$ is said to be of \emph{exponential type} if one of the
following equivalent conditions holds: } }

\textrm{\textrm{(i) There are $C\geq0$ and $c>0$ such that $\|f(x)\|\leq
C\exp{(c\|x\|)},$ for all $x\in E$. } }

\textrm{\textrm{(ii) There are $D\geq0$ and $d>0$ such that $\|\hat{d}%
^{m}f(0)\|\leq Cc^{m},$ for all $m\in\mathbb{N}$. } }

\textrm{\textrm{(iii) $\limsup_{m\rightarrow\infty} \|\hat{d}^{m}%
f(0)\|^{\frac{1}{m}}<+\infty$. } }

\textrm{\textrm{We denote by $Exp(E;F)$ the vector space of all entire
mappings of exponential type from $E$ into $F$. When $F$ is the scalar field
$\mathbb{C}$ we denote $Exp(E)$ instead of $Exp(E;\mathbb{C}).$ } }
\end{definition}

The next two lemmata will be necessary for the proof of the division theorem
(Theorem \ref{teodiv}). The first is a division result due to Gupta (see
\cite{G, Gupta}) .

\begin{lemma}
If $f$, $g$ and $h$ are entire mappings on $E$ with values in $\mathbb{C}$,
$f\neq0$, $h(x) = f(x)g(x)$ for all $x\in E$, with $f$ and $h$ of exponential
type on $E$, then $g$ is of exponential type on $E$.
\end{lemma}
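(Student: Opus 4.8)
The plan is to transfer the classical one-variable division theorem for entire functions of exponential type to the Banach space $E$ by restricting to complex lines. I shall work with the growth characterization in Definition \ref{def_exp_type}(i): a map is of exponential type exactly when it is dominated by $C\exp(c\|\cdot\|)$. Fix $A\ge 0,a>0$ and $B\ge 0,b>0$ with $|f(z)|\le A\,e^{a\|z\|}$ and $|h(z)|\le B\,e^{b\|z\|}$ for all $z\in E$, and fix once and for all a point $p\in E$ with $f(p)\ne 0$, which exists since $f\ne 0$. To bound $g$ at an arbitrary $x\in E$, restrict to the complex affine line $t\mapsto p+t(x-p)$ and set
\[
F(t)=f\bigl(p+t(x-p)\bigr),\quad H(t)=h\bigl(p+t(x-p)\bigr),\quad G(t)=g\bigl(p+t(x-p)\bigr).
\]
These are entire functions of one complex variable with $H=FG$; from $\|p+t(x-p)\|\le\|p\|+|t|\,\|x-p\|$ one gets $|F(t)|\le A\,e^{a\|p\|}e^{a\|x-p\|\,|t|}$ and $|H(t)|\le B\,e^{b\|p\|}e^{b\|x-p\|\,|t|}$, so $F$ and $H$ are of exponential type $\alpha:=a\|x-p\|$ and $\beta:=b\|x-p\|$ with $x$-independent leading constants. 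The reason for using a line through $p$ rather than through the origin is that $F(0)=f(p)\ne 0$, so $F\not\equiv 0$ and the division is nondegenerate on every such line.

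The one-variable core is the quantitative division estimate. In the model case where $F$ has no zeros, $F(t)=F(0)\,e^{\gamma t}$ with $|\gamma|\le\alpha$, and then
\[
|G(t)|=\frac{|H(t)|}{|F(0)|\,e^{\mathrm{Re}(\gamma t)}}\le\frac{B\,e^{b\|p\|}}{|f(p)|}\,e^{(\alpha+\beta)|t|},
\]
which already has the desired form with a constant governed by $|f(p)|$. In general one invokes Hadamard factorization: $F$ and $H$ have order $\le 1$ and finite type, and since $G=H/F$ is entire the zeros of $F$ must occur among those of $H$ with at least the same multiplicity; cancelling them in the canonical products exhibits $G$ as a product of genus $\le 1$ times an exponential factor, whence $G$ is of exponential type with type at most $\alpha+\beta$. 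The quantitative point bound is recovered from the minimum modulus theorem: off an exceptional set of radii, $\min_{|t|=R}|F(t)|$ is bounded below by $\exp(-(\alpha+\varepsilon)R)$ up to a factor controlled through Jensen's formula by $|f(p)|$, and on such circles $|G|=|H|/|F|$ is bounded, the maximum modulus principle then propagating the bound inward. This is the content of the scalar result of Gupta \cite{G, Gupta}.

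Evaluating at $t=1$, which corresponds to the point $x$, yields $|g(x)|=|G(1)|\le C\,e^{(a+b)\|x\|}$ after absorbing the $x$-independent quantities $\|p\|$, $A$, $B$ and $|f(p)|$ into a single constant $C$; since $x$ was arbitrary this is precisely the estimate (i) of Definition \ref{def_exp_type}, so $g\in Exp(E)$. The step I expect to be the main obstacle is exactly this uniformity in $x$: the constant produced by the minimum modulus argument depends on the number of zeros of $F$ in large disks, which Jensen's formula controls in terms of the growth of $F$ and of $\log|F(0)|$. Here $|F(0)|=|f(p)|$ is a fixed positive number independent of $x$, and the growth enters only through the type $\alpha+\beta=(a+b)\|x-p\|$, so the whole $x$-dependence can be kept inside the exponential factor; verifying that no stray constant grows with $\|x\|$ is the delicate bookkeeping that makes the conclusion genuinely uniform.
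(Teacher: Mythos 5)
The paper offers no proof of this lemma: it is quoted verbatim from Gupta \cite{G, Gupta}, and Gupta's argument (following Malgrange) is exactly the reduction you carry out --- restrict to complex affine lines through a fixed point $p$ with $f(p)\neq 0$, apply the quantitative one-variable division theorem for entire functions of exponential type, and check that all constants depend on $x$ only through the linear growth of the type along the line. So your proposal is correct and follows essentially the same route as the cited source; you also correctly identify the crux, namely that the Jensen/Cartan constants are governed by the $x$-independent quantities $Ae^{a\|p\|}$, $Be^{b\|p\|}$ and $|f(p)|$, while the $x$-dependence sits entirely in the type $\tau=\max(a,b)\|x-p\|$, which only contributes a factor $e^{c\|x\|}$. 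One spot in the write-up is loose, though harmless: the Hadamard-factorization aside. A genus-one canonical product over a \emph{subset} of the zeros of an exponential-type function is not automatically of exponential type; one needs Lindel\"of's condition that the partial sums $\sum_{|b_n|\le r}1/b_n$ remain bounded, which here does hold because the zero set of $G$ is the difference of those of $H$ and $F$, both of which satisfy it --- and the asserted sharp type $\alpha+\beta$ requires extra work. Since the minimum-modulus estimate you invoke afterwards is what actually produces the pointwise bound at $t=1$ (with an exponent $c\|x\|$ for some $c$ possibly larger than $a+b$, which is all that Definition \ref{def_exp_type}(i) requires), the factorization paragraph could simply be deleted without affecting the proof.
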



\begin{lemma}
\label{prod_funcoes}Let $r,q,s,p\in\left[  1,+\infty\right[  $ with $r\leq q$
and $F$ be a Banach space. If $f\in Exp\left(  F\right)  $ and $g\in
Exp_{as\left(  \left(  r^{\prime},q^{\prime}\right)  ;\left(  s^{\prime
},p^{\prime}\right)  \right)  }\left(  F\right)  $ then $fg$ is in
$Exp_{as\left(  \left(  r^{\prime},q^{\prime}\right)  ;\left(  s^{\prime
},p^{\prime}\right)  \right)  }\left(  F\right)  .$
\end{lemma}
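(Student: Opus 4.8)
The plan is to pass to the monomial (Taylor) expansion of the product and reduce the statement to an \emph{ideal property} of the Lorentz summing polynomials. Since $f,g\in\mathcal{H}(F)$ take scalar values, so does $fg$, and the uniqueness of the expansion $fg(x)=\sum_{n}\frac{1}{n!}\hat{d}^{n}(fg)(0)(x)$ yields the Cauchy--Leibniz formula
\[
\frac{1}{n!}\hat{d}^{n}(fg)(0)=\sum_{k=0}^{n}\left(\frac{1}{k!}\hat{d}^{k}f(0)\right)\left(\frac{1}{(n-k)!}\hat{d}^{n-k}g(0)\right),
\]
expressing the $n$-homogeneous part of $fg$ as a finite sum of pointwise products of a continuous $k$-homogeneous polynomial coming from $f$ and an $(n-k)$-homogeneous polynomial coming from $g$. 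Thus everything rests on showing that such products stay in the summing class with a good norm bound.

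First I would prove the ideal property (abbreviating the Lorentz summing quasi-norm by $\Vert\cdot\Vert_{as}$): if $Q\in\mathcal{P}(^{k}F)$ and $P\in\mathcal{P}_{as((r^{\prime},q^{\prime});(s^{\prime},p^{\prime}))}(^{m}F)$, then $PQ\in\mathcal{P}_{as((r^{\prime},q^{\prime});(s^{\prime},p^{\prime}))}(^{m+k}F)$ with $\Vert PQ\Vert_{as}\leq\Vert Q\Vert\,\Vert P\Vert_{as}$. Fix $(x_{j})_{j}\in\ell_{(s^{\prime},p^{\prime})}^{w}(F)$. Two elementary facts do the work: since the $n=1$ term in the definition of $\Vert\cdot\Vert_{(s^{\prime},p^{\prime})}$ is $a_{F,1}(\cdot)=\Vert\cdot\Vert_{\infty}$, one has $\Vert\cdot\Vert_{\infty}\leq\Vert\cdot\Vert_{(s^{\prime},p^{\prime})}$, whence $\sup_{j}\Vert x_{j}\Vert\leq\Vert(x_{j})_{j}\Vert_{w,(s^{\prime},p^{\prime})}$; and the Lorentz quasi-norm is monotone under termwise domination. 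Because $|(PQ)(x_{j})|\leq\Vert Q\Vert(\sup_{i}\Vert x_{i}\Vert)^{k}|P(x_{j})|$ with a prefactor independent of $j$, monotonicity together with the defining inequality of $\Vert P\Vert_{as}$ from Theorem \ref{novoJ} gives
\[
\left\Vert ((PQ)(x_{j}))_{j}\right\Vert _{(r^{\prime},q^{\prime})}\leq\Vert Q\Vert\left(\sup_{i}\Vert x_{i}\Vert\right)^{k}\left\Vert (P(x_{j}))_{j}\right\Vert _{(r^{\prime},q^{\prime})}\leq\Vert Q\Vert\,\Vert P\Vert_{as}\,\Vert(x_{j})_{j}\Vert_{w,(s^{\prime},p^{\prime})}^{m+k},
\]
which is precisely the condition in Theorem \ref{novoJ} identifying $PQ$ as summing with the claimed submultiplicative estimate.

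Applying this term by term to the Cauchy--Leibniz formula, each summand lies in $\mathcal{P}_{as((r^{\prime},q^{\prime});(s^{\prime},p^{\prime}))}(^{n}F)$, hence so does $\hat{d}^{n}(fg)(0)$; this is the first requirement in the definition of $Exp_{as((r^{\prime},q^{\prime});(s^{\prime},p^{\prime}))}(F)$. For the exponential-type bound I would feed in the hypotheses $\Vert\hat{d}^{k}f(0)\Vert\leq C_{1}c_{1}^{k}$ (condition (ii) of Definition \ref{def_exp_type}) and $\Vert\hat{d}^{k}g(0)\Vert_{as}\leq C_{2}c_{2}^{k}$. Combining the submultiplicative estimate with the (iterated quasi-)triangle inequality, multiplying through by $n!$, and using $\sum_{k=0}^{n}\binom{n}{k}c_{1}^{k}c_{2}^{n-k}=(c_{1}+c_{2})^{n}$ gives
\[
\left\Vert \hat{d}^{n}(fg)(0)\right\Vert _{as((r^{\prime},q^{\prime});(s^{\prime},p^{\prime}))}\leq K_{n}\,C_{1}C_{2}\,(c_{1}+c_{2})^{n},
\]
where $K_{n}$ is the constant incurred by summing $n+1$ terms in the quasi-normed space. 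The only real technical point is that $K_{n}$ grows at most polynomially in $n$ (one splits the $n+1$ terms in a binary tree, paying one quasi-triangle constant per level); since any polynomial factor is dominated by $(1+\varepsilon)^{n}$, it can be absorbed by replacing $c_{1}+c_{2}$ with $c:=(1+\varepsilon)(c_{1}+c_{2})$, leaving a bound $Cc^{n}$, exactly the exponential-type condition. Alternatively, under the standing assumption $r\in\,]1,\infty[$ the space $\ell_{(r^{\prime},q^{\prime})}$ is normable and one may simply take $K_{n}=1$. The hard part is the ideal property with its clean submultiplicative constant; granting that, the exponential-type estimate is a routine binomial computation up to the quasi-norm bookkeeping.
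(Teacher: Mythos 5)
Your proof is correct, and it is built on the same skeleton as the paper's: the Leibniz/Cauchy-product identity $\hat{d}^{n}(fg)(0)=\sum_{k=0}^{n}\binom{n}{k}\hat{d}^{k}f(0)\,\hat{d}^{n-k}g(0)$ together with the pointwise estimate $|Q(x_{j})|\le\Vert Q\Vert\,\Vert x_{j}\Vert^{k}\le\Vert Q\Vert\,\Vert(x_{i})_{i}\Vert_{w,(s^{\prime},p^{\prime})}^{k}$. Your packaging of the latter as an ideal property $\Vert PQ\Vert_{as}\le\Vert Q\Vert\,\Vert P\Vert_{as}$ is a clean, reusable reformulation, and the monotonicity of the Lorentz quasi-norm under termwise domination that it relies on does hold, since $a_{\mathbb{K},n}(x)=\inf_{|S|\le n-1}\sup_{j\notin S}|x_{j}|$. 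Where you genuinely diverge is in summing the $n+1$ terms inside the quasi-normed space. The paper never invokes the quasi-triangle inequality of $\mathcal{P}_{as((r^{\prime},q^{\prime});(s^{\prime},p^{\prime}))}(^{n}F)$ at all: it sums the pointwise bounds first and then uses the subadditivity, for a \emph{fixed} injection $\pi$, of the functional $\left[\sum_{j}\left(j^{\frac{1}{r^{\prime}}-\frac{1}{q^{\prime}}}|a_{\pi(j)}|\right)^{q^{\prime}}\right]^{\frac{1}{q^{\prime}}}$ (Lemma 3.1 of Matos--Pellegrino, valid because $r^{\prime}\ge q^{\prime}$), choosing only at the end the injection that realizes the rearrangement; this delivers the clean constant $CD(c+d)^{k}$ with no loss. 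You instead iterate the quasi-triangle inequality; your two observations needed to make this work are both sound --- the modulus of concavity of $\Vert\cdot\Vert_{as}$ is inherited from $\ell_{(r^{\prime},q^{\prime})}$ and hence independent of the degree $n$, so $K_{n}\le\kappa(n+1)^{\log_{2}\kappa}$ is polynomial in $n$ and can be absorbed by enlarging the base from $c_{1}+c_{2}$ to $(1+\varepsilon)(c_{1}+c_{2})$. The net effect is a correct, slightly less sharp, but more self-contained argument that trades the Matos--Pellegrino subadditivity lemma for a generic quasi-norm absorption trick.
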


\begin{proof}
For each $k\in\mathbb{N}$, it follows from the uniqueness of the power series
of a holomorphic function around a point of its domain that%
\[
\hat{d}^{k}\left(  fg\right)  (0)(x)=%
{\displaystyle\sum\limits_{l=0}^{k}}
k!\frac{1}{l!}\hat{d}^{l}f(0)(x)\frac{1}{(k-l)!}\hat{d}^{k-l}g(0)(x)
\]
for all $x\in F.$ Since $f\in Exp\left(  F\right)  $, there are $C\geq0$ and
$c>0$ such that%
\[
\left\Vert \hat{d}^{n}f(0)\right\Vert \leq Cc^{n},
\]
for every $n\in\mathbb{N}.$ For $\left\Vert (x_{j})_{j=1}^{\infty}\right\Vert
_{w,(s^{\prime},p^{\prime})}\leq1$, we have $\left\Vert x_{j}\right\Vert
\leq1$ for every $j\in\mathbb{N}$ and so%
\[
\left\vert \hat{d}^{n}f(0)(x_{j})\right\vert \leq Cc^{n},
\]
for every $n\in\mathbb{N}.$ Thus%

\[
\left\vert \hat{d}^{k}\left(  fg\right)  (0)(x_{j})\right\vert \leq C%
{\displaystyle\sum\limits_{l=0}^{k}}
\frac{k!}{l!(k-l)!}c^{l}\left\vert \hat{d}^{k-l}g(0)(x_{j})\right\vert ,
\]
for all $j\in\mathbb{N}$. Let $\pi\colon\mathbb{N}\longrightarrow\mathbb{N}$
be an injection. Since $r^{\prime}\geq q^{\prime}$ we have (using Lemma 3.1 of
\cite{Matos-Pellegrino})%
\begin{align*}
\left[
{\displaystyle\sum\limits_{j=1}^{\infty}}
\left(  j^{\frac{1}{r^{\prime}}-\frac{1}{q^{\prime}}}\left\vert \hat{d}%
^{k}\left(  fg\right)  (0)(x_{\pi(j)})\right\vert \right)  ^{q^{\prime}%
}\right]  ^{\frac{1}{q^{\prime}}}  &  \leq C%
{\displaystyle\sum\limits_{l=0}^{k}}
\frac{k!}{l!(k-l)!}c^{l}\left[
{\displaystyle\sum\limits_{j=1}^{\infty}}
\left(  j^{\frac{1}{r^{\prime}}-\frac{1}{q^{\prime}}}\left\vert \hat{d}%
^{k-l}g(0)(x_{\pi(j)})\right\vert \right)  ^{q^{\prime}}\right]  ^{\frac
{1}{q^{\prime}}}\\
&  \leq C%
{\displaystyle\sum\limits_{l=0}^{k}}
\frac{k!}{l!(k-l)!}c^{l}\left\Vert \left(  \hat{d}^{k-l}g(0)(x_{j})\right)
_{j=1}^{\infty}\right\Vert _{(r^{\prime},q^{\prime})}.
\end{align*}

By Theorem \ref{novoJ} we have%
\[
\left\Vert \left(  \hat{d}^{k-l}g(0)(x_{j})\right)  _{j=1}^{\infty}\right\Vert
_{(r^{\prime},q^{\prime})}\leq\left\Vert \hat{d}^{k-l}g(0)\right\Vert
_{as\left(  (r^{\prime},q^{\prime}),(s^{\prime},p^{\prime})\right)
}\left\Vert (x_{j})_{j=1}^{\infty}\right\Vert _{w,(s^{\prime},p^{\prime}%
)}^{k-l}\leq\left\Vert \hat{d}^{k-l}g(0)\right\Vert _{as\left(  (r^{\prime
},q^{\prime}),(s^{\prime},p^{\prime})\right)  },
\]
for all $l=1,\ldots,k.$ Thus%
\[
\left[
{\displaystyle\sum\limits_{j=1}^{\infty}}
\left(  j^{\frac{1}{r^{\prime}}-\frac{1}{q^{\prime}}}\left\vert \hat{d}%
^{k}\left(  fg\right)  (0)(x_{\pi(j)})\right\vert \right)  ^{q^{\prime}%
}\right]  ^{\frac{1}{q^{\prime}}}\leq C%
{\displaystyle\sum\limits_{l=0}^{k}}
\frac{k!}{l!(k-l)!}c^{l}\left\Vert \hat{d}^{k-l}g(0)\right\Vert _{as\left(
(r^{\prime},q^{\prime}),(s^{\prime},p^{\prime})\right)  }%
\]
and we conclude that $\left(  \left\vert \hat{d}^{k}\left(  fg\right)
(0)(x_{\pi(j)})\right\vert \right)  _{j=1}^{\infty}\in\ell_{(r^{\prime
},q^{\prime})}$ and thus $\left(  \hat{d}^{k}\left(  fg\right)  (0)(x_{\pi
(j)})\right)  _{j=1}^{\infty}\in c_{0}.$

Since $\pi$ is arbitrary we have%
\[
\left[
{\displaystyle\sum\limits_{j=1}^{\infty}}
\left(  j^{\frac{1}{r^{\prime}}-\frac{1}{q^{\prime}}}\left\vert \hat{d}%
^{k}\left(  fg\right)  (0)(x_{\sigma(j)})\right\vert \right)  ^{q^{\prime}%
}\right]  ^{\frac{1}{q^{\prime}}}=\left\Vert \left(  \hat{d}^{k}\left(
fg\right)  (0)(x_{j})\right)  _{j=1}^{\infty}\right\Vert _{(r^{\prime
},q^{\prime})}%
\]
for some injection $\sigma\colon\mathbb{N}\longrightarrow\mathbb{N}$. Hence%
\[
\left\Vert \left(  \hat{d}^{k}\left(  fg\right)  (0)(x_{j})\right)
_{j=1}^{\infty}\right\Vert _{(r^{\prime},q^{\prime})}\leq C%
{\displaystyle\sum\limits_{l=0}^{k}}
\frac{k!}{l!(k-l)!}c^{l}\left\Vert \hat{d}^{k-l}g(0)\right\Vert _{as\left(
(r^{\prime},q^{\prime}),(s^{\prime},p^{\prime})\right)  }.
\]
Now, let $0\neq(x_{j})_{j=1}^{\infty}\in\ell_{\left(  s^{\prime},p^{\prime
}\right)  }^{w}(F).$ Defining
\[
y_{j}=\frac{x_{j}}{\left\Vert (x_{j})_{j=1}^{\infty}\right\Vert _{w,(s^{\prime
},p^{\prime})}},
\]
for all $j\in\mathbb{N},$ we have
\[
\left\Vert (y_{j})_{j=1}^{\infty}\right\Vert _{w,(s^{\prime},p^{\prime})}=1
\]
and using the previous estimates we obtain%
\[
\left\Vert \left(  \hat{d}^{k}\left(  fg\right)  (0)(x_{j})\right)
_{j=1}^{\infty}\right\Vert _{(r^{\prime},q^{\prime})}\leq\left(  C%
{\displaystyle\sum\limits_{l=0}^{k}}
\frac{k!}{l!(k-l)!}c^{l}\left\Vert \hat{d}^{k-l}g(0)\right\Vert _{as\left(
(r^{\prime},q^{\prime}),(s^{\prime},p^{\prime})\right)  }\right)  \left\Vert
(x_{j})_{j=1}^{\infty}\right\Vert _{w,(s^{\prime},p^{\prime})}^{k}.
\]
Using again Theorem \ref{novoJ}, it follows that $\hat{d}^{k}g(0)\in
\mathcal{P}_{as\left(  \left(  r^{\prime},q^{\prime}\right)  ;\left(
s^{\prime},p^{\prime}\right)  \right)  }\left(  ^{k}F\right)  $ and%
\[
\left\Vert \hat{d}^{k}\left(  fg\right)  (0)\right\Vert _{as\left(
(r^{\prime},q^{\prime}),(s^{\prime},p^{\prime})\right)  }\leq C%
{\displaystyle\sum\limits_{l=0}^{k}}
\frac{k!}{l!(k-l)!}c^{l}\left\Vert \hat{d}^{k-l}g(0)\right\Vert _{as\left(
(r^{\prime},q^{\prime}),(s^{\prime},p^{\prime})\right)  }.
\]
Since $g\in Exp_{as\left(  \left(  r^{\prime},q^{\prime}\right)  ;\left(
s^{\prime},p^{\prime}\right)  \right)  }\left(  F\right)  $ there are $D\geq0$
and $d>0$ such that%
\[
\left\Vert \hat{d}^{n}g(0)\right\Vert _{as\left(  (r^{\prime},q^{\prime
}),(s^{\prime},p^{\prime})\right)  }\leq Dd^{n},
\]
for all $n\in\mathbb{N}$. Hence%
\[
\left\Vert \hat{d}^{k}\left(  fg\right)  (0)\right\Vert _{as\left(
(r^{\prime},q^{\prime}),(s^{\prime},p^{\prime})\right)  }\leq CD%
{\displaystyle\sum\limits_{l=0}^{k}}
\frac{k!}{l!(k-l)!}c^{l}d^{k-l}=CD\left(  c+d\right)  ^{k}%
\]
and $fg\in Exp_{as\left(  \left(  r^{\prime},q^{\prime}\right)  ;\left(
s^{\prime},p^{\prime}\right)  \right)  }\left(  F\right)  .$
\end{proof}

\begin{theorem}
[Division Theorem]\label{teodiv}Let $r,q,s,p\in\left[  1,+\infty\right[  $
with $r\leq q$ and $F$ be a Banach space. Then $Exp_{as\left(  \left(
r^{\prime},q^{\prime}\right)  ;\left(  s^{\prime},p^{\prime}\right)  \right)
}\left(  F\right)  $ is closed under division, that is, if $f,g$ and $h$ are
entire mappings on $F$ with values in $\mathbb{C}$, $f\neq0,$ and $h=fg$ with
$f$ and $h$ in $Exp_{as\left(  \left(  r^{\prime},q^{\prime}\right)  ;\left(
s^{\prime},p^{\prime}\right)  \right)  }\left(  F\right)  ,$ then $g$ is also
in $Exp_{as\left(  \left(  r^{\prime},q^{\prime}\right)  ;\left(  s^{\prime
},p^{\prime}\right)  \right)  }\left(  F\right)  .$
\end{theorem}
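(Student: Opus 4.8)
The plan is to split the statement into two layers: first recover ordinary exponential type for the quotient $g$, and then promote each homogeneous Taylor coefficient of $g$ to the Lorentz summing class while keeping exponential control of its summing norm. First I would record the inclusion $Exp_{as((r',q');(s',p'))}(F)\subseteq Exp(F)$. Indeed, evaluating the characterization of Theorem~\ref{novoJ} on the one-term sequence $(x,0,0,\dots)$ gives $\|P\|\le C\|P\|_{as((r',q');(s',p'))}$ for every homogeneous $P$, so the exponential bound on $\|\hat d^m f(0)\|_{as((r',q');(s',p'))}$ forces an exponential bound on the usual norms $\|\hat d^m f(0)\|$; hence $f,h\in Exp(F)$. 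Applying the preceding division lemma for entire functions of exponential type (the result of Gupta) to $h=fg$ with $f\neq0$ then yields $g\in Exp(F)$.

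It remains to show $\hat d^k g(0)\in\mathcal P_{as((r',q');(s',p'))}(^k F)$ for all $k$, with exponential control of $\|\hat d^k g(0)\|_{as((r',q');(s',p'))}$. I would first reduce to the case $f(0)\neq0$ by translating $f,g,h$ to a point $a$ with $f(a)\neq0$ and invoking translation invariance of the summing class; this invariance follows from derivative stability of the summing class, obtained in the spirit of Proposition~\ref{proposicao d^kP}, and it allows one to transport the conclusion back to $g$. Normalizing $f(0)=1$ and setting $\phi=1-f$, so that $\phi(0)=0$ and $\phi\in Exp(F)$, the relation $h=fg$ becomes $g=h+\phi g$. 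Iterating gives $g=\sum_{m=0}^{N}\phi^m h+\phi^{N+1}g$, and since $\phi$ vanishes at the origin the factor $\phi^{N+1}$ vanishes there to order $N+1$; therefore, for every $N\ge k$ one obtains the exact finite identity $\hat d^k g(0)=\sum_{m=0}^{k}\hat d^k(\phi^m h)(0)$. Now each $\phi^m h$ lies in $Exp_{as((r',q');(s',p'))}(F)$ by Lemma~\ref{prod_funcoes}, applied with the exponential-type factor $\phi^m$ and the summing factor $h$; consequently $\hat d^k(\phi^m h)(0)\in\mathcal P_{as((r',q');(s',p'))}(^k F)$ and the same membership follows for $\hat d^k g(0)$.

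Finally I would establish the growth estimate $\|\hat d^k g(0)\|_{as((r',q');(s',p'))}\le C'(c')^k$. Writing $b_k=\frac{1}{k!}\|\hat d^k g(0)\|_{as((r',q');(s',p'))}$ and expanding $h=fg$ by Leibniz with $f(0)=1$, the numbers $b_k$ satisfy a convolution inequality of the form $b_k\le e_k+\sum_{l=1}^{k}a_l b_{k-l}$, where the sequences $(a_l)$ and $(e_k)$ decay geometrically because $f,h\in Exp_{as((r',q');(s',p'))}(F)$; a routine generating-function argument then forces geometric decay of $(b_k)$, which is exactly the exponential bound required for $g\in Exp_{as((r',q');(s',p'))}(F)$. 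I expect the main obstacle to be the possible vanishing of $f$ at the origin, which rules out a naive inversion of the power series of $f$: it is precisely the combination of Gupta's lemma (which already places $g$ in $Exp(F)$, so that all the products above live in well-behaved spaces) with the translation device that makes the coefficient recursion both legitimate and uniformly controlled. A secondary technical point is to extract from the proof of Lemma~\ref{prod_funcoes} a quantitative product estimate strong enough to guarantee a single geometric rate in the recursion, rather than one that degrades as $k$ grows.
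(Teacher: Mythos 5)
Your opening move (the inclusion $Exp_{as((r',q');(s',p'))}(F)\subseteq Exp(F)$ plus Gupta's lemma to get $g\in Exp(F)$) matches the paper, and your Neumann-series identity $\hat d^k g(0)=\sum_{m=0}^{k}\hat d^k(\phi^m h)(0)$ is a correct alternative way to obtain the \emph{membership} $\hat d^k g(0)\in\mathcal P_{as((r',q');(s',p'))}(^kF)$ when $f(0)\neq0$. But the two remaining steps each contain a genuine gap. First, the reduction to $f(0)\neq 0$ by translating to a point $a$ with $f(a)\neq0$ requires translation invariance of $Exp_{as((r',q');(s',p'))}(F)$, which in turn needs derivative stability of the Lorentz \emph{summing} class $\mathcal P_{as}$. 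Proposition~\ref{proposicao d^kP} gives this only for the \emph{nuclear} class; for summing-type polynomial classes stability under differentiation is not automatic and is nowhere established here, so this reduction is unsupported. The paper sidesteps the issue entirely: it replaces $f$ by $f_0=f+\psi$ and $h$ by $h_0=h+\psi g$ with $\psi\in Exp_{as}$, $\psi(0)\neq0$; then $h_0=f_0g$, and $h_0\in Exp_{as}$ because $\psi g=g\psi$ is a product of an $Exp_{as}$ function with the $Exp$ function $g$ (Lemma~\ref{prod_funcoes}). No translation, hence no stability claim, is needed.

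Second, and more seriously, your quantitative step does not close. The recursion $b_k\le e_k+\sum_{l=1}^{k}a_l b_{k-l}$ with $b_k=\frac{1}{k!}\|\hat d^k g(0)\|_{as}$ has coefficients $a_l\le Cc^l/l!$ and $e_k\le Aa^k/k!$; the majorant generating function is $\epsilon(t)/(1-\alpha(t))$ with $\alpha(t)=C(e^{ct}-1)$, which has a singularity at the finite point $t_0$ where $\alpha(t_0)=1$. The resulting bound is $b_k\lesssim t_0^{-k}$, i.e.\ $\|\hat d^k g(0)\|_{as}\lesssim k!\,t_0^{-k}$, which is \emph{not} the geometric bound $Cc^k$ demanded by Definition~\ref{definition_exp_space} (and a direct induction on $b_k\le Dd^k$ fails for the same reason: $\sum_{l=1}^k\binom{k}{l}c^ld^{k-l}=(c+d)^k-d^k$ cannot be absorbed into $Dd^k$ uniformly in $k$). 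The paper's estimate is deliberately \emph{non-recursive}: in the Leibniz identity $f(0)\hat d^k g(0)=\hat d^k h(0)-\sum_{l=1}^{k}\binom{k}{l}\hat d^l f(0)\,\hat d^{k-l}g(0)$, every product term has $l\ge1$, so the Lorentz summability is carried by the factor $\hat d^l f(0)$ (whose $as$-norms are geometric by hypothesis), while the factor $\hat d^{k-l}g(0)(x_j)$ is bounded pointwise by $Cc^{k-l}$ using only the sup-norm exponential type of $g$ already supplied by Gupta's lemma. Then $\sum_{l=1}^{k}\binom{k}{l}b^l c^{k-l}\le (b+c)^k$ gives the geometric bound in one stroke. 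You should replace your recursion by this direct estimate (which also renders your Neumann-series step unnecessary, since the same identity simultaneously yields membership and the norm bound).
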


\begin{proof}
For each $k\in\mathbb{N}$ we have%
\[
\hat{d}^{k}h(0)(x)=f(0)\hat{d}^{k}g(0)(x)+%
{\displaystyle\sum\limits_{l=1}^{k}}
\frac{k!}{l!(k-l)!}\hat{d}^{l}f(0)(x)\hat{d}^{k-l}g(0)(x)
\]
and so%
\[
f(0)\hat{d}^{k}g(0)(x)=\hat{d}^{k}h(0)(x)-%
{\displaystyle\sum\limits_{l=1}^{k}}
\frac{k!}{l!(k-l)!}\hat{d}^{l}f(0)(x)\hat{d}^{k-l}g(0)(x)
\]
for all $x\in F.$ Let us suppose $f(0)\neq0.$ For $\left\Vert (x_{j}%
)_{j=1}^{\infty}\right\Vert _{w,(s^{\prime},p^{\prime})}\leq1$, we have
$\left\Vert x_{j}\right\Vert \leq1$ for every $j\in\mathbb{N}$. Since
$\Vert\cdot\Vert\leq\Vert\cdot\Vert_{as\left(  \left(  r^{\prime},q^{\prime
}\right)  ;\left(  s^{\prime},p^{\prime}\right)  \right)  },$ it follows from
Definition \ref{def_exp_type}(ii) that $g$ is of exponential type. So there
are $C\geq0$ and $c>0$ such that%
\[
\left\Vert \hat{d}^{k}g(0)\right\Vert \leq Cc^{k}.
\]
Therefore%
\[
\left\vert \hat{d}^{k}g(0)(x_{j})\right\vert \leq\frac{1}{\left\vert
f(0)\right\vert }\left\vert \hat{d}^{k}h(0)(x_{j})\right\vert +\frac
{C}{\left\vert f(0)\right\vert }%
{\displaystyle\sum\limits_{l=1}^{k}}
\frac{k!}{l!(k-l)!}c^{k-l}\left\vert \hat{d}^{l}f(0)(x_{j})\right\vert ,
\]
for every $j\in\mathbb{N}.$ Let $\pi\colon\mathbb{N}\longrightarrow\mathbb{N}$
be an injection. Since $r^{\prime}\geq q^{\prime}$ we have (using Lemma 3.1 of
\cite{Matos-Pellegrino})%
\begin{align*}
&  \left[
{\displaystyle\sum\limits_{j=1}^{\infty}}
\left(  j^{\frac{1}{r^{\prime}}-\frac{1}{q^{\prime}}}\left\vert \hat{d}%
^{k}g(0)(x_{\pi(j)})\right\vert \right)  ^{q^{\prime}}\right]  ^{\frac
{1}{q^{\prime}}}\\
&  \leq\frac{1}{\left\vert f(0)\right\vert }\left[
{\displaystyle\sum\limits_{j=1}^{\infty}}
\left(  j^{\frac{1}{r^{\prime}}-\frac{1}{q^{\prime}}}\left\vert \hat{d}%
^{k}h(0)(x_{\pi(j)})\right\vert \right)  ^{q^{\prime}}\right]  ^{\frac
{1}{q^{\prime}}}\\
&  +\frac{C}{\left\vert f(0)\right\vert }%
{\displaystyle\sum\limits_{l=1}^{k}}
\frac{k!}{l!(k-l)!}c^{k-l}\left[
{\displaystyle\sum\limits_{j=1}^{\infty}}
\left(  j^{\frac{1}{r^{\prime}}-\frac{1}{q^{\prime}}}\left\vert \hat{d}%
^{l}f(0)(x_{\pi(j)})\right\vert \right)  ^{q^{\prime}}\right]  ^{\frac
{1}{q^{\prime}}}\\
&  \leq\frac{1}{\left\vert f(0)\right\vert }\left\Vert \left(  \hat{d}%
^{k}h(0)(x_{j})\right)  _{j=1}^{\infty}\right\Vert _{(r^{\prime},q^{\prime}%
)}+\frac{C}{\left\vert f(0)\right\vert }%
{\displaystyle\sum\limits_{l=1}^{k}}
\frac{k!}{l!(k-l)!}c^{k-l}\left\Vert \left(  \hat{d}^{l}f(0)(x_{j})\right)
_{j=1}^{\infty}\right\Vert _{(r^{\prime},q^{\prime})}.
\end{align*}
By Theorem \ref{novoJ} we have%
\begin{align*}
\left\Vert \left(  \hat{d}^{k}h(0)(x_{j})\right)  _{j=1}^{\infty}\right\Vert
_{(r^{\prime},q^{\prime})} &  \leq\left\Vert \hat{d}^{k}h(0)\right\Vert
_{as\left(  (r^{\prime},q^{\prime}),(s^{\prime},p^{\prime})\right)
}\left\Vert (x_{j})_{j=1}^{\infty}\right\Vert _{w,(s^{\prime},p^{\prime})}%
^{k}\leq\left\Vert \hat{d}^{k}h(0)\right\Vert _{as\left(  (r^{\prime
},q^{\prime}),(s^{\prime},p^{\prime})\right)  },\\
\left\Vert \left(  \hat{d}^{l}f(0)(x_{j})\right)  _{j=1}^{\infty}\right\Vert
_{(r^{\prime},q^{\prime})} &  \leq\left\Vert \hat{d}^{l}f(0)\right\Vert
_{as\left(  (r^{\prime},q^{\prime}),(s^{\prime},p^{\prime})\right)
}\left\Vert (x_{j})_{j=1}^{\infty}\right\Vert _{w,(s^{\prime},p^{\prime})}%
^{l}\leq\left\Vert \hat{d}^{l}f(0)\right\Vert _{as\left(  (r^{\prime
},q^{\prime}),(s^{\prime},p^{\prime})\right)  },
\end{align*}
for all $l=1,\ldots,k.$ Thus%
\begin{align*}
&  \left[
{\displaystyle\sum\limits_{j=1}^{\infty}}
\left(  j^{\frac{1}{r^{\prime}}-\frac{1}{q^{\prime}}}\left\vert \hat{d}%
^{k}g(0)(x_{\pi(j)})\right\vert \right)  ^{q^{\prime}}\right]  ^{\frac
{1}{q^{\prime}}}\\
&  \leq\frac{1}{\left\vert f(0)\right\vert }\left\Vert \hat{d}^{k}%
h(0)\right\Vert _{as\left(  (r^{\prime},q^{\prime}),(s^{\prime},p^{\prime
})\right)  }+\frac{C}{\left\vert f(0)\right\vert }%
{\displaystyle\sum\limits_{l=1}^{k}}
\frac{k!}{l!(k-l)!}c^{k-l}\left\Vert \hat{d}^{l}f(0)\right\Vert _{as\left(
(r^{\prime},q^{\prime}),(s^{\prime},p^{\prime})\right)  }.
\end{align*}
We conclude that$\left(  \left\vert \hat{d}^{k}g(0)(x_{\pi(j)})\right\vert
\right)  _{j=1}^{\infty}\in\ell_{(r^{\prime},q^{\prime})}$ and thus $\left(
\hat{d}^{k}g(0)(x_{\pi(j)})\right)  _{j=1}^{\infty}\in c_{0}$. Since $\pi$ is
arbitrary we have%
\[
\left[
{\displaystyle\sum\limits_{j=1}^{\infty}}
\left(  j^{\frac{1}{r^{\prime}}-\frac{1}{q^{\prime}}}\left\vert \hat{d}%
^{k}g(0)(x_{\sigma(j)})\right\vert \right)  ^{q^{\prime}}\right]  ^{\frac
{1}{q^{\prime}}}=\left\Vert \left(  \hat{d}^{k}g(0)(x_{j})\right)
_{j=1}^{\infty}\right\Vert _{(r^{\prime},q^{\prime})}%
\]
for some injection $\sigma\colon\mathbb{N}\longrightarrow\mathbb{N}$.\newline
Now, let $0\neq(x_{j})_{j=1}^{\infty}\in\ell_{\left(  s^{\prime},p^{\prime
}\right)  }^{w}(F).$ Defining
\[
y_{j}=\frac{x_{j}}{\left\Vert (x_{j})_{j=1}^{\infty}\right\Vert _{w,(s^{\prime
},p^{\prime})}},
\]
for all $j\in\mathbb{N},$ we have $\left\Vert (y_{j})_{j=1}^{\infty
}\right\Vert _{w,(s^{\prime},p^{\prime})}=1$ and using the previous estimates
we obtain%
\begin{align*}
&  \left\Vert \left(  \hat{d}^{k}g(0)(x_{j})\right)  _{j=1}^{\infty
}\right\Vert _{(r^{\prime},q^{\prime})}\\
&  \leq\left(  \frac{1}{\left\vert f(0)\right\vert }\left\Vert \hat{d}%
^{k}h(0)\right\Vert _{as\left(  (r^{\prime},q^{\prime}),(s^{\prime},p^{\prime
})\right)  }+\frac{C}{\left\vert f(0)\right\vert }%
{\displaystyle\sum\limits_{l=1}^{k}}
\frac{k!}{l!(k-l)!}c^{k-l}\left\Vert \hat{d}^{l}f(0)\right\Vert _{as\left(
(r^{\prime},q^{\prime}),(s^{\prime},p^{\prime})\right)  }\right)  \left\Vert
(x_{j})_{j=1}^{\infty}\right\Vert _{w,(s^{\prime},p^{\prime})}^{k}.
\end{align*}
Using again Theorem \ref{novoJ}, it follows that $\hat{d}^{k}g(0)\in
\mathcal{P}_{as\left(  \left(  r^{\prime},q^{\prime}\right)  ;\left(
s^{\prime},p^{\prime}\right)  \right)  }\left(  ^{k}F\right)  .$ Since $f,h\in
Exp_{as\left(  \left(  r^{\prime},q^{\prime}\right)  ;\left(  s^{\prime
},p^{\prime}\right)  \right)  }\left(  F\right)  ,$ there are $A,B\geq0$ and
$a,b>0$ such that%
\[
\left\Vert \hat{d}^{k}h(0)\right\Vert _{as\left(  (r^{\prime},q^{\prime
}),(s^{\prime},p^{\prime})\right)  }\leq Aa^{k}%
\]
and%
\[
\left\Vert \hat{d}^{l}f(0)\right\Vert _{as\left(  (r^{\prime},q^{\prime
}),(s^{\prime},p^{\prime})\right)  }\leq Bb^{l},
\]
for $l=1,\ldots,k.$ Hence%
\begin{align*}
\left\Vert \left(  \hat{d}^{k}g(0)(x_{j})\right)  _{j=1}^{\infty}\right\Vert
_{(r^{\prime},q^{\prime})} &  \leq\left(  \frac{Aa^{k}}{\left\vert
f(0)\right\vert }+\frac{CB}{\left\vert f(0)\right\vert }%
{\displaystyle\sum\limits_{l=1}^{k}}
k!\frac{1}{l!}\frac{1}{(k-l)!}c^{k-l}b^{l}\right)  \left\Vert (x_{j}%
)_{j=1}^{\infty}\right\Vert _{w,(s^{\prime},p^{\prime})}^{k}\\
&  \leq\left(  \frac{A}{\left\vert f(0)\right\vert }+\frac{CB}{\left\vert
f(0)\right\vert }\right)  \left(  a+b+c\right)  ^{k}\left\Vert (x_{j}%
)_{j=1}^{\infty}\right\Vert _{w,(s^{\prime},p^{\prime})}^{k},
\end{align*}
for all $(x_{j})_{j=1}^{\infty}\in\ell_{\left(  s^{\prime},p^{\prime}\right)
}^{w}(F)$ and by the definition of $\left\Vert \cdot\right\Vert _{as\left(
(r^{\prime},q^{\prime}),(s^{\prime},p^{\prime})\right)  }$ (see Theorem
\ref{novoJ}) we have%
\[
\left\Vert \hat{d}^{k}g(0)\right\Vert _{as\left(  (r^{\prime},q^{\prime
}),(s^{\prime},p^{\prime})\right)  }\leq Dd^{k},
\]
with $D=\frac{A+BC}{\left\vert f(0)\right\vert }\geq0$ and $d=a+b+c\geq
0.$\newline Now, suppose that $f(0)=0$ and define
\[
f_{0}\left(  x\right)  =f\left(  x\right)  +\psi\left(  x\right)
\]
and
\[
h_{0}\left(  x\right)  =h\left(  x\right)  +\psi\left(  x\right)  g\left(
x\right)
\]
for all $x\in F,$ where $\psi\in Exp_{as\left(  \left(  r^{\prime},q^{\prime
}\right)  ;\left(  s^{\prime},p^{\prime}\right)  \right)  }\left(  F\right)
,$ $\psi\left(  0\right)  \neq0$ and $\psi$ is non constant (for example, let
$\psi\left(  x\right)  =1+P\left(  x\right)  ,$ with $P\in\mathcal{P}%
_{as\left(  \left(  s,p\right)  ;\left(  r,q\right)  \right)  }\left(
^{n}F\right)  $ for some $n\neq0$). Thus $h_{0}=f_{0}g$, $f_{0}\left(
0\right)  \neq0,$ $f_{0}\in Exp_{as\left(  \left(  r^{\prime},q^{\prime
}\right)  ;\left(  s^{\prime},p^{\prime}\right)  \right)  }\left(  F\right)  $
and by Lemma \ref{prod_funcoes}, $h_{0}\in Exp_{as\left(  \left(  r^{\prime
},q^{\prime}\right)  ;\left(  s^{\prime},p^{\prime}\right)  \right)  }\left(
F\right)  .$ Applying the result we have just proved, it follows that $g\in
Exp_{as\left(  \left(  r^{\prime},q^{\prime}\right)  ;\left(  s^{\prime
},p^{\prime}\right)  \right)  }\left(  F\right)  .$
\end{proof}

For $F=E^{\prime},$ we have that $Exp_{as\left(  \left(  r^{\prime},q^{\prime
}\right)  ;\left(  s^{\prime},p^{\prime}\right)  \right)  }\left(  E^{\prime
}\right)  $ is closed under division, and so it follows from Theorems
\ref{teoremaAproximacao1} and \ref{Teorema de existencia} that we have
existence and approximation results for convolution equations defined on
$\mathcal{H}_{\widetilde{N}b,\left(  \left(  r,q\right)  ;\left(  s,p\right)
\right)  }(E)$ as enunciated below.

\begin{theorem}
The vector subspace of $\mathcal{H}_{\widetilde{N}b,\left(  \left(
r,q\right)  ;\left(  s,p\right)  \right)  }(E)\textcolor{red}{,}$ generated by the exponential
polynomial solutions of the homogeneous equation $L=0,$ is dense in the closed
subspace of all solutions of the homogeneous equation, that is, the vector
subspace of $\mathcal{H}_{\widetilde{N}b,\left(  \left(  r,q\right)  ;\left(
s,p\right)  \right)  }(E)$ generated by
\[
\mathcal{L=}\left\{  P\exp\varphi;P\in\mathcal{P}_{\widetilde{N},\left(
\left(  r,q\right)  ;\left(  s,p\right)  \right)  }\left(  ^{m}E\right)
,m\in\mathbb{N}_{0},\varphi\in E^{\prime},L\left(  P\exp\varphi\right)
=0\right\}
\]
is dense in
\[
\ker L=\left\{  f\in\mathcal{H}_{\widetilde{N}b,\left(  \left(  r,q\right)
;\left(  s,p\right)  \right)  }(E);Lf=0\right\}  .
\]

\end{theorem}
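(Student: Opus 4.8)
The plan is to obtain this statement as a direct application of Theorem~\ref{teoremaAproximacao1} (that is, \cite[Theorem 4.2]{favaro-jatoba1}) to the holomorphy type $\Theta=\widetilde{N},\left(  \left(  r,q\right)  ;\left(  s,p\right)  \right)  $. To invoke that theorem I must supply three ingredients: that $\left(  \mathcal{P}_{\widetilde{N},\left(  \left(  r,q\right)  ;\left(  s,p\right)  \right)  }\left(  ^{m}E\right)  \right)  _{m=0}^{\infty}$ is a $\pi_{1}$-$\pi_{2}$-holomorphy type, that the associated space $Exp_{\Theta^{\prime}}(E^{\prime})$ is closed under division, and that $L$ is a convolution operator on $\mathcal{H}_{\widetilde{N}b,\left(  \left(  r,q\right)  ;\left(  s,p\right)  \right)  }(E)$. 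The last item is exactly the standing hypothesis on $L$, so the real work lies in the first two.

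For the first ingredient I would simply assemble the results already established in this section. Proposition~\ref{proposicao d^kP} shows that the sequence is stable for derivatives with $C_{n,k}=\frac{n!}{\left(  n-k\right)  !}$, so Corollary~\ref{corollary_holom_type} makes $\left(  \mathcal{P}_{\widetilde{N},\left(  \left(  r,q\right)  ;\left(  s,p\right)  \right)  }\left(  ^{n}E\right)  \right)  _{n=0}^{\infty}$ a holomorphy type; Proposition~\ref{pi1_tipo_holomorfia} furnishes the $\pi_{1}$ property, and the proposition immediately following it furnishes the $\pi_{2}$ property. Hence $\Theta$ is a $\pi_{1}$-$\pi_{2}$-holomorphy type.

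The conceptual crux is the identification of the dual type $\Theta^{\prime}$. By Theorem~\ref{555} the Borel transform identifies $\mathcal{P}_{N,\left(  \left(  r,q\right)  ;\left(  s,p\right)  \right)  }\left(  ^{n}E\right)  ^{\prime}$ with $\mathcal{P}_{as\left(  \left(  r^{\prime},q^{\prime}\right)  ;\left(  s^{\prime},p^{\prime}\right)  \right)  }\left(  ^{n}E^{\prime}\right)  $, and the main theorem of \secref{sec2} shows that replacing the quasi-norm $\left\Vert \cdot\right\Vert _{N}$ by the norm $\left\Vert \cdot\right\Vert _{\widetilde{N}}$ preserves this duality, so that $\mathcal{P}_{\widetilde{N},\left(  \left(  r,q\right)  ;\left(  s,p\right)  \right)  }\left(  ^{n}E\right)  ^{\prime}$ is again $\mathcal{P}_{as\left(  \left(  r^{\prime},q^{\prime}\right)  ;\left(  s^{\prime},p^{\prime}\right)  \right)  }\left(  ^{n}E^{\prime}\right)  $. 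Consequently the dual type is $\Theta^{\prime}=as\left(  \left(  r^{\prime},q^{\prime}\right)  ;\left(  s^{\prime},p^{\prime}\right)  \right)  $ and the relevant exponential-type space is $Exp_{\Theta^{\prime}}(E^{\prime})=Exp_{as\left(  \left(  r^{\prime},q^{\prime}\right)  ;\left(  s^{\prime},p^{\prime}\right)  \right)  }\left(  E^{\prime}\right)  $. The Division Theorem (Theorem~\ref{teodiv}), applied with the Banach space $F=E^{\prime}$, asserts precisely that this space is closed under division.

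With all three hypotheses in hand, Theorem~\ref{teoremaAproximacao1} applies verbatim and gives that the vector subspace generated by $\mathcal{L}$ is dense in $\ker L$, which is the desired conclusion. I expect the only genuinely delicate point to be the identification of $\Theta^{\prime}$ with the summing class $as\left(  \left(  r^{\prime},q^{\prime}\right)  ;\left(  s^{\prime},p^{\prime}\right)  \right)  $: since the division theorem is phrased for $Exp_{as\left(  \left(  r^{\prime},q^{\prime}\right)  ;\left(  s^{\prime},p^{\prime}\right)  \right)  }$, one must be certain that the Borel-transform dual of the completed (tilde) construction lands exactly in that summing space, and this is guaranteed by combining Theorem~\ref{555} with the dual-preservation theorem of \secref{sec2}.
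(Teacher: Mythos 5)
Your proposal is correct and follows exactly the route the paper takes: the paper assembles the same three ingredients (the $\pi_{1}$-$\pi_{2}$-holomorphy type property via Proposition \ref{proposicao d^kP}, Corollary \ref{corollary_holom_type}, Proposition \ref{pi1_tipo_holomorfia} and the subsequent proposition; the identification of $\Theta^{\prime}$ with $as\left(\left(r^{\prime},q^{\prime}\right);\left(s^{\prime},p^{\prime}\right)\right)$ via Theorem \ref{555} and the duality-preservation theorem of Section \ref{sec2}; and closure under division via Theorem \ref{teodiv} with $F=E^{\prime}$) and then deduces the statement directly from Theorem \ref{teoremaAproximacao1}. The only standing hypothesis worth making explicit, which the paper states at the opening of the subsection, is that $E^{\prime}$ has the bounded approximation property, needed for Theorem \ref{555}.
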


\begin{theorem}
If $L$ is a non zero convolution operator, then
\[
L\left(  \mathcal{H}_{\widetilde{N}b,\left(  \left(  r,q\right)  ;\left(
s,p\right)  \right)  }\left(  E\right)  \right)  =\mathcal{H}_{\widetilde
{N}b,\left(  \left(  r,q\right)  ;\left(  s,p\right)  \right)  }\left(
E\right)  .
\]

\end{theorem}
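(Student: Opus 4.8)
The plan is to deduce this statement directly from the general existence result, Theorem~\ref{Teorema de existencia}, specialized to the holomorphy type $\Theta=\widetilde{N},\left(\left(r,q\right);\left(s,p\right)\right)$. That theorem yields surjectivity of any nonzero convolution operator $L$ on $\mathcal{H}_{\Theta b}(E)$ as soon as two conditions are verified: that $\left(\mathcal{P}_{\Theta}(^{m}E)\right)_{m=0}^{\infty}$ is a $\pi_{1}$-$\pi_{2}$-holomorphy type, and that the associated exponential-type space $Exp_{\Theta'}(E')$ is closed under division. So the whole argument amounts to confirming that these two conditions hold here, every substantive ingredient having been prepared in the preceding results.

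First I would note that the $\pi_{1}$-$\pi_{2}$ requirement is already in hand: Proposition~\ref{pi1_tipo_holomorfia} establishes that $\left(\mathcal{P}_{\widetilde{N},\left(\left(r,q\right);\left(s,p\right)\right)}(^{n}E)\right)_{n=0}^{\infty}$ is a $\pi_{1}$-holomorphy type, and the proposition immediately following it establishes that the same sequence is a $\pi_{2}$-holomorphy type. Taken together these say precisely that $\Theta$ is a $\pi_{1}$-$\pi_{2}$-holomorphy type, discharging the first hypothesis.

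Next I would identify the relevant exponential space. By Theorem~\ref{555}, the Borel transform carries the dual of $\mathcal{P}_{N,((r,q);(s,p))}$ onto $\mathcal{P}_{as((r',q');(s',p'))}$, so the dual holomorphy type is $\Theta'=as\left(\left(r',q'\right);\left(s',p'\right)\right)$ and the space appearing in Theorem~\ref{Teorema de existencia} is $Exp_{as\left(\left(r',q'\right);\left(s',p'\right)\right)}(E')$. The Division Theorem, Theorem~\ref{teodiv}, applied with the Banach space $F=E'$, states exactly that this space is closed under division, discharging the second hypothesis.

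With both hypotheses confirmed, Theorem~\ref{Teorema de existencia} gives the conclusion at once: every nonzero convolution operator $L$ on $\mathcal{H}_{\widetilde{N}b,\left(\left(r,q\right);\left(s,p\right)\right)}(E)$ is onto, which is the asserted equality of sets. The only genuinely demanding step in the chain was proving the division property itself; as anticipated in the three-step program of Section~\ref{sec3}, the verification of a division theorem (step (ii)) is the hard part, whereas at the level of the present theorem nothing remains but to assemble the already-established pieces.
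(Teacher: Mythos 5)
Your proposal is correct and follows exactly the route the paper takes: the paper likewise deduces the theorem by combining the two propositions showing that $\left(\mathcal{P}_{\widetilde{N},\left(\left(r,q\right);\left(s,p\right)\right)}\left(^{n}E\right)\right)_{n=0}^{\infty}$ is a $\pi_{1}$-$\pi_{2}$-holomorphy type with the Division Theorem applied to $F=E^{\prime}$, and then invokes Theorem~\ref{Teorema de existencia}. Nothing is missing; the assembly of the previously established ingredients is all the proof consists of.
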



\textbf{Acknowledgements. }The authors thank M\'{a}rio C. Matos for
introducing them to this subject and for his permanent encouragement. The
authors also want to thank G. Botelho for his useful suggestions.

\bigskip

\noindent Vin\'{\i}cius V. F\'{a}varo: \newline Faculdade de Matem\'atica,
Universidade Federal de Uberl\^andia, Uberl\^andia, MG, Cep: 38.400-902,
Brazil, email: vvfavaro@gmail.com

\bigskip

\noindent Daniel Pellegrino:\newline Departamento de Matem\'{a}tica,
Universidade Federal da Para\'{\i}ba, Jo\~{a}o Pessoa, PB, Cep: 58.051--900,
Brazil, email: dmpellegrino@gmail.com


\begin{thebibliography}{99}                                                                                               %


\bibitem {aronmarkose}R. Aron and D. Markose, \textit{On universal functions},
in: Satellite Conference on Infinite Dimensional Function Theory, J. Korean
Math. Soc. \textbf{41} (2004), 65--76.

\bibitem {bay}F. Bayart, \'E. Matheron, \textit{Dynamics of linear operators},
Cambridge Tracts in Mathematics, 179. Cambridge University Press, Cambridge, 2009.

\bibitem {bay2}F. Bayart, \'E. Matheron, \textit{Mixing operators and small
subsets of the circle}, J. Reine Angew. Math., to appear.

\bibitem {BPS1}L. Bernal-Gonzalez, D. Pellegrino, J.B. Seoane-Sepulveda,
\textit{Linear subsets of nonlinear sets in topological vector spaces}, Bull.
Amer. Math. Soc. (N.S.) \textbf{51 } (2014), no. 1, 71--130.

\bibitem {BBFJ}F. J. Bertoloto, G. Botelho, V. V. F\'avaro, A. M. Jatob\'a,
\textit{Hypercyclicity of convolution operators on spaces of entire
functions}. Ann. Inst. Fourier (Grenoble) \textbf{63 } (2013), 1263-1283.



\bibitem {bes2012}J. B\`es, \"O. Martin, A. Peris and S. Shkarin
\textit{Disjoint mixing operators}, J. Funct. Anal. \textbf{263} (2012), 1283--1322.

\bibitem {birkhoff}G. D. Birkhoff, \textit{D\'emonstration d'un th\'eor\`eme
\'el\'ementaire sur les fonctions enti\`eres}, C. R. Acad. Sci. Paris
\textbf{189} (1929), 473--475.

\bibitem {boyd1}C. Boyd, \textit{Duality and reflexivity of spaces of
approximable polynomials on locally convex spaces}, Monatsh. Math.
\textbf{130} (2000), 177--188.

\bibitem {boyd2}C. Boyd and A. Brown, \textit{Duality in spaces of polynomials
pf degree at most $n$}, J. Math. Anal. Appl. \textbf{429} (2015), 1271--1290.

\bibitem {boyd3}C. Boyd S. Dineen and P. Rueda, \textit{Locally Asplund space
of holomorphic functions}, Michigan Math. J. \textbf{50} (2002), 493--506.

\bibitem {CDjmaa}D. Carando and V. Dimant, \textit{Duality in spaces of
nuclear and integral polynomials}, J. Math. Anal. Appl. \textbf{241} (2000), 107--121.

\bibitem {CDSjmaa}D. Carando, V. Dimant and S. Muro, \textit{Hypercyclic
convolution operators on Fr\'echet spaces of analytic functions}, J. Math.
Anal. Appl. \textbf{336} (2007), 1324--1340.

\bibitem {chan}K. C. Chan and J. H. Shapiro, \textit{The cyclic behaviour of
translation operators on Hilbert spaces of entire funcitons}, Indiana Univ. J.
\textbf{40} (1991), 1421--1449.

\bibitem {Col-Mat}J. F. Colombeau and M. C. Matos, \textit{Convolution
equations in spaces of infinite dimensional entire functions}, Indag. Math.
\textbf{42} (1980), 375-389.

\bibitem {cp}J.F Colombeau and B. Perrot, \textit{Convolution equations in
spaces of infinite dimensional entire functions of exponencial and related
types}, Trans. Amer. Math. Soc. \textbf{258} (1980), 191-198.

\bibitem {cgp}J.F Colombeau, R. Gay and B. Perrot, \textit{Division by
holomorphic functions and convolution equations in infinite dimension}, Trans.
Amer. Math. Soc. \textbf{264} (1981), 381-391.

\bibitem {defant}A. Defant and K. Floret, \textit{Tensor Norms and Operator
Ideals}, North-Holland Math. Studies \textbf{176}, 1993.

\bibitem {Diestel}J. Diestel, H. Jarchow and A. Tonge, \textit{Absolutely
Summing Operators}, Cambridge Studies in Advanced Mathematics \textbf{43}, 1995.

\bibitem {Dineen}S. Dineen, \textit{Complex Analysis on Infinite Dimensional
Spaces}, Springer-Verlag, London, 1999.

\bibitem {Dineen-70}S. Dineen, \textit{Holomorphy types on a Banach space},
Studia Math. \textbf{39} (1971), 241--288.

\bibitem {DS}J. Dieudonn\'e and L. Schwartz, \textit{La dualit\'e dans les
espaces ($\mathcal{F}$) et ($\mathcal{LF}$),} Ann. Inst. Fourier (Grenoble)
\textbf{I} (1949), 61--101.

\bibitem {DW}T.A.W. Dwyer III, \textit{Convolution equations for vector-valued
entire functions of nuclear bounded type}, Trans. Amer. Math. Soc.
\textbf{217} (1976), 105- 119.

\bibitem {DW2}T.A.W. Dwyer III, \textit{Partial differential equations in
Fischer-Fock spaces for the Hilbert-Schmidt holomorphy type}, Bull. Amer.
Math. Soc. \textbf{77} (1971), 725- 730.

\bibitem {Fa}V. V. F\'{a}varo,\textit{The Fourier-Borel transform between
spaces of entire functions of a given type and order}, Portugal. Math.
\textbf{65} (2008), 285-309.

\bibitem {FaBelg}V. V. F\'{a}varo,\textit{Convolution equations on spaces of
quasi-nuclear functions of a given type and order}, Bull. Belg. Math. Soc.
Simon Stevin \textbf{17} (2010), 535--569.

\bibitem {favaro-jatoba1}\textrm{V. V. F\'avaro, A. M. Jatob\'a}, \textit{
Holomorphy types and spaces of entire functions of bounded type on Banach
spaces}, Czech. Math. Journal \textbf{59} (2009), 909-927.

\bibitem {favaro-jatoba2}\textrm{V. V. F\'avaro, A. M. Jatob\'a},
\textit{Holomorphy types and the Fourier-Borel transform between spaces of
entire functions of a given type and order defined on Banach spaces}, Math.
Scand. \textbf{110} (2012), 111-139.

\bibitem {favaromatospellegrino}V. V. F\'{a}varo, M. C. Matos, D. Pellegrino,
\textit{On Lorentz nuclear homogeneous polynomials between Banach spaces},
Portugal. Math. \textbf{67} (2010), 413-435.

\bibitem {FM}\textrm{V. V. F\'avaro, J. Mujica}, \textit{ Hypercyclic
convolution operators on spaces of entire functions}, to appear in J. Operator Theory.

\bibitem {gethner}R. M. Gethner and J. H. Shapiro, \textit{Universal vectors
for operators on spaces of holomorphic functions}, Proc. Amer. Math. Soc.
\textbf{100} (1987), 281--288.

\bibitem {godefroy}G. Godefroy and J. H. Shapiro, \textit{Operators with
dense, invariant, cyclic vector manifolds}, J. Funct. Anal. \textbf{98}
(1991), 229--269.

\bibitem {goswinBAMS}K. G. Grosse-Erdmann, \textit{Universal families and
hypercyclic operators}, Bull. Amer. Math. Soc. \textbf{36} (1999), 345--381.

\bibitem {G}C. P. Gupta, Convolution Operators and Holomorphic Mappings on a
Banach Space, Seminaire d'Analyse Moderne, 2, Universit\'e de Sherbrooke,
Sherbrooke, 1969.

\bibitem {Gupta}C. P. Gupta, \textit{On the Malgrange Theorem for nuclearly
entire functions of bounded type on a Banach space}, Indag. Math. \textbf{32}
(1970), 356-358.

\bibitem {hhh}L. H\"ormander, \textit{On the division of distributions by
polynomials}, Ark. Math. \textbf{3} (1958), 555-568.

\bibitem {kitai}C. Kitai, \textit{Invariant closed sets for linear operators},
Dissertation, University of Toronto, 1982.

\bibitem {loja}S. Lojasiewicz, \textit{Sur le probl\`{e}me de la division},
Studia Math. \textbf{18} (1959), 87-136.

\bibitem {maclane}G. R. MacLane, \textit{Sequences of derivatives and normal
families}, J. Anal. Math. \textbf{2} (1952), 72--87.

\bibitem {Malgrange}B. Malgrange, \textit{Existence et approximation des
solutions des \'{e}quations aux deriv\'{e}es partielles et des \'{e}quations
des convolutions}, Ann. Inst. Fourier (Grenoble) \textbf{6 }(1955-56), 271-355.

\bibitem {Martineau}A. Martineau, \textit{\'{E}quations diff\'{e}rentielles
d'ordre infini}, Bull. Soc. Math. France \textbf{95} (1967), 109-154.

\bibitem {Matos-F}M. C. Matos, \textit{Sur le th\'{e}or\`{e}me d'approximatin
et d'existence de Malgrange-Gupta}, C. R. Acad. Sci. Paris,\textbf{ 271}
(1970), 1258-1259.

\bibitem {Matos-Z}M. C. Matos, \textit{On Malgrange Theorem for nuclear
holomorphic functions in open balls of a Banach space}, Math. Z. \textbf{171}
(1980), 113-123.

\bibitem {Matos-Z2}M. C. Matos, \textit{Correction to \textquotedblleft On
Malgrange Theorem for nuclear holomorphic functions in open balls of a Banach
space\textquotedblright}, Math. Z. (1980), 289-290.

\bibitem {Matos-C2}M. C. Matos, \textit{On convolution operators in spaces of
entire functions of a given type and order}, in: Complex Analysis, Functional
Analysis and Approximation Theory (J. Mujica, ed.), pp. 129-171. North-Holland
Math. Studies \textbf{125}, North-Holland, Amsterdam, 1986.



\bibitem {Matos-livro}M. C. Matos, \textit{Absolutely summing mappings,
nuclear mappings and convolution equations}, IMECC-UNICAMP, 2007. Web:
http://www.ime.unicamp.br/$\sim$matos.

\bibitem {MN}M. C. Matos and L. Nachbin, \textit{Entire functions on locally
convex spaces and convolution operators}, Comp. Math. \textbf{44} (1981), 145-181.

\bibitem {Matos-Pellegrino}M. C. Matos and D. Pellegrino, \textit{Lorentz
summing mappings}, Math. Nachr. \textbf{283} (2010), 1409-1427.

\bibitem {Mujica}J. Mujica, \textit{Complex analysis in Banach spaces},
North-Holland Mathematics Studies 120, North-Holland, 1986.

\bibitem {MPS}S. Muro, D. Pinasco, M. Savransky, \textit{Strongly mixing
convolution operators on Fr\'echet spaces of holomorphic functions}, Integr.
Equ. Oper. Theory \textbf{80} (2014), 453-468.

\bibitem {Nach-B}L. Nachbin, \textit{Recent developments in infinite
dimensional holomorphy}, Bull. Amer. Math. Soc. \textbf{79} (1973), 625-639.

\bibitem {nachbin}L. Nachbin, \textit{Topology on Spaces of Holomorphic
Mappings,} Springer-Verlag, New York, 1969.

\bibitem {peterssonjmaa}H. Petersson, \textit{Hypercyclic subspaces for
Fr\'echet space operators}, J. Math. Anal. Appl. \textbf{319} (2006), 764--782.

\bibitem {Pietsch}A. Pietsch, \textit{Operator Ideals}, North-Holland, 1980.

\bibitem {pietsch}\textrm{A. Pietsch}, \textit{Ideals of multilinear
functionals}. In: Proceedings of the Second International Conference on
Operator Algebras, Ideals and Their Applications in Theoretical Physics,
pp.185-199, Teubner, $1983$.

\bibitem {SHA}H. H. Schaefer, \textit{Topological Vector Spaces},
Springer-Verlag, 1971.


\end{thebibliography}
\end{document}